\documentclass{article}

\usepackage{arxiv}

\usepackage[utf8]{inputenc} 
\usepackage[T1]{fontenc}    
\usepackage{hyperref}       
\usepackage{url}            
\usepackage{booktabs}       
\usepackage{amsfonts}       
\usepackage{nicefrac}       
\usepackage{microtype}      
\usepackage{xcolor}         
\usepackage[square,numbers]{natbib}
\usepackage{doi}

\usepackage{amsmath}
\usepackage{algorithm}
\usepackage{algpseudocode}
\usepackage{amsthm}
\usepackage{graphicx}     
\usepackage{subcaption}
\usepackage{amssymb}

\theoremstyle{definition}

\newtheorem{theorem}{Theorem}
\newtheorem{proposition}{Proposition}
\newtheorem{lemma}{Lemma}

\newtheorem{remark}{Remark}
\newtheorem{corollary}{Corollary}

\def\bs{\boldsymbol}
\def\E{\mathbb{E}}
\DeclareMathOperator*{\tr}{Tr}

\title{Sampling from the Random Linear Model via Stochastic Localization Up to the AMP Threshold}

\author{
 Han Cui \\
  Department of Statistics\\
  University of Illinois Urbana-Champaign \\
  Champaign, IL 61820 \\
  \texttt{hancui5@illinois.edu} \\
  \And
  Zhiyuan Yu \\
  Department of Statistics\\
  University of Illinois Urbana-Champaign \\
  Champaign, IL 61820 \\
  \texttt{yu124@illinois.edu} \\
  \AND
  Jingbo Liu \\
  Department of Statistics\\
  University of Illinois Urbana-Champaign \\
  Champaign, IL 61820 \\
  \texttt{jingbol@illinois.edu} \\
}

\begin{document}
\maketitle

\begin{abstract}
  Recently, Approximate Message Passing (AMP) has been integrated with stochastic localization (diffusion model) by providing a computationally efficient estimator of the posterior mean.
Existing (rigorous) analysis typically proves the success of sampling for sufficiently small noise,
but determining the exact threshold involves several challenges.
In this paper, we focus on sampling from the posterior in the linear inverse problem, with an i.i.d.\ random design matrix,
and show that the threshold for sampling coincides with that of posterior mean estimation. 
We give a proof for the convergence in smoothed KL divergence
whenever the noise variance $\Delta$ is below $\Delta_{\rm AMP}$,
which is the computation threshold for mean estimation introduced in (Barbier et al.,
2020).
We also show convergence in the Wasserstein distance under the same threshold assuming a dimension-free bound on the operator norm of the posterior covariance matrix, 
a condition strongly suggested by recent breakthroughs on operator norm bounds in similar replica symmetric systems.
A key observation in our analysis is that phase transition does not occur along the sampling and interpolation paths assuming $\Delta<\Delta_{\rm AMP}$.
\end{abstract}

\section{INTRODUCTION}

Linear inverse problem is fundamental to a wide range of fields in science, engineering, and technology, including medical imaging \citep{qu2013magnetic, song2022solving, marinus2010magnetic}, compressed sensing \citep{bayati2011dynamics, bayati2012thelasso, david2009message} and image restoration \citep{gonzalez2006digital}.
In this paper, we consider the linear inverse problem of reconstructing a signal $\bs{\theta}\in\mathbb{R}^N$ from noisy measurements $\bs{y}\in\mathbb{R}^M$ in Gaussian random linear model
\begin{align}
    \bs{y} = \bs{\phi\theta}+\sqrt{\frac{\Delta}{\alpha}}\bs{w},
    \label{model1}
\end{align}
where $\alpha=M/N$ is the measurement rate, $\bs{\phi}\in\mathbb{R}^{M\times N}$ is a known Gaussian matrix with i.i.d. entries, $(\bs{\phi}_{ij})_{i\leq M, j\leq N}\sim \mathcal{N}(0, 1/M)$, $\bs{w}$ is a noise vector with i.i.d.\ components $\bs{w}_{i}\sim\mathcal{N}(0,1)$ for $i\in[M]$, and $\Delta$ is a known variance. We focus on the high-dimensional setting, where $M, N\rightarrow\infty$ with $\alpha$ fixed.

Suppose that $\bs{\theta}$ 
has a prior distribution $\pi=P_0^{\bigotimes N}$ supported on $[-L_\theta,L_\theta]^{N}$. 
Our goal is to generate a sample from 
the posterior distribution
\begin{align}
    \mu_{\bs{y}}(\bs{\theta})
    &:=
\mathbb{P}(\bs{\theta}|\bs{y})
    =\frac{1}{Z(\bs{y})}
    \exp\left(-\dfrac{\alpha}{2\Delta}\|\bs{\phi}\bs{\theta}-\bs{y}\|_2^2\right)P_0^{\bigotimes N}(\bs{\theta}),
\end{align}
where $Z(\bs{y}):=\displaystyle\int\exp\left(-\dfrac{\alpha}{2\Delta}\|\bs{\phi}\bs{\theta}-\bs{y}\|_2^2\right)P_0^{\bigotimes N}(\bs{\theta})d\bs{\theta}$.

Sampling from posterior distribution and understanding its hardness regimes is a major problem in Bayesian learning. Traditional methods such as MCMC or Langevin dynamics may be slow or require log-convexity of the prior. 
Specifically, MCMC tends to require substantial computational resources when applied to high-dimensional problems and suffers from slow convergence due to multimodality of the posterior distribution \citep{dawn2013convergence}.

Recently, \citet{chung2022diffusion} proposed that diffusion model can also be utilized to sample the posterior distribution. Suppose the data noising process $\bs{\theta}_t, t\in[0, T]$ has the form
\begin{align}\label{eq:forward_SDE}
    d\bs{\theta}=-\frac{\beta(t)}{2}\bs{\theta}dt+\sqrt{\beta(t)}d\bs{W},
\end{align}
where $\bs{W}$ is the standard $N$-dimensional Wiener process and $\beta(t)$ is the noise schedule of the process, typically taken to be monotonically increasing linear function of $t$. The data distribution satisfies $\bs{\theta}_0\sim \mathbb{P}(\bs{\theta}|\bs{y})$ and $\bs{\theta}_T\sim \mathcal{N}(0, \bs{I}_N)$. It is possible to use the reverse SDE of \eqref{eq:forward_SDE} to sample from the posterior:
\begin{align}\label{eq:reverse_SDE}
    d\bs{\theta}
    =[&-\frac{\beta(t)}{2}\bs{\theta}
    -\beta(t)(\nabla_{\bs{\theta}_t}\log p_t(\bs{\theta}_t)+\nabla_{\bs{\theta}_t}\log p_t(\bs{y}|\bs{\theta}_t))
    ]dt +
    \sqrt{\beta(t)}d{\bar{\bs{W}}},
\end{align}
where ${\bar{\bs{W}}}$ is a standard Wiener process when time flows backwards from $T$ to $0$. In the sampling algorithm, \citet{chung2022diffusion} use the neural network $s(\bs{\theta}_t, t)$ trained with denoising score matching to replace the score function term $\nabla_{\bs{\theta}_t}p_t(\bs{\theta}_t)$ and $p(\bs{y}|\mathbb{E}_{\bs{\theta}_0\sim p(\bs{\theta}_0|\bs{\theta}_t)}[\bs{\theta}_0])$ to replace the likelihood term $p_t(\bs{y}|\bs{\theta}_t)$ in \eqref{eq:reverse_SDE}. However, $p(\bs{y}|\mathbb{E}_{\bs{\theta}_0\sim p(\bs{\theta}_0|\bs{\theta}_t)}[\bs{\theta}_0])$ is only an approximation of $p_t(\bs{y}|\bs{\theta}_t)$. 
To get a more accurate estimate, we consider using the Approximate Message Passing(AMP) algorithm \citep{david2009message}, which is known for its low complexity and rapid convergence, often achieving near-optimal solutions in a small number of iterations. 
However, AMP still has some limitations. For example, $\bs{\phi}$ should be semi-random matrix ensemble \citep{dudeja2023universality} or $\bs{\phi}$ should satisfy orthogonal rotational invariance in law \citep{fan2022approximate}.

\citet{barbier2020mutual} claims that under some constraints of the prior distribution, the mean squared error (MSE) of the sample generated by AMP algorithm achieves the minimum mean squared error (MMSE) asymptotically when $\Delta$ is within a certain range and gives a rigorous proof in \citet{barbier2019optimal}.
Since \citet{montanari2023sampling} pointed out that sampling based on time-reversal of diffusion processes and on stochastic localization are in fact 
equivalent, 
we extend the analysis in \citet{barbier2020mutual} by giving a rigorous proof of the optimality of AMP in posterior sampling with an additional stochastic localization observation $(\bs{z_t})_{t\geq 0}$:
\begin{align}
\bs{z}_t=t\bs{\theta^*}+\bs{G}_t,
\end{align}
where $\bs{\theta^*}$ is drawn from the posterior distribution $\mu_{\bs{y}}(\bs{\theta})$, and $\bs{G}_t$ is a standard $N$-dimensional Brownian motion and is independent of $\bs{y}$, $\phi$, $\bs{\theta}$, $\bs{\theta^*}$ and $\bs{w}$. Here, we cannot directly apply the results of \citet{barbier2020mutual} if we add the additional stochastic localization observation $(\bs{z_t})_{t\geq 0}$. We use state evolution to prove that AMP is still almost surely optimal with additional observation $(\bs{z_t})_{t\geq 0}$ which is detailed in Theorem \ref{thm1} in Section \ref{sec:con_guaran}.

Recently, \citet{montanari2023posterior} applied stochastic localization and AMP to reconstruct the signal $\bs{\theta}$ from spiked model $\bs{X}=n^{-1}\beta \bs{\theta}\bs{\theta}^\top+\bs{W}$, 
proved convergence for sufficiently large $\beta$, 
and conjectured that the sampling hardness threshold coincides with the infimum $\beta^{-1}$ in the hard phase of mean estimation. Since both spiked model and linear inverse problem deal with reconstructing a high-dimensional signal from the noisy observations, we adopt the same framework on linear inverse problem. 
However, the computation hardness threshold of the linear inverse model \eqref{model1} is not obvious if we follow the idea of \citet{montanari2023posterior} directly. 
In this paper, we verify that the sampling algorithm performs effectively when $\Delta < \Delta_{\rm AMP}$ under the smoothed KL error metric, 
where $\Delta_{\rm AMP}$ is the infimum variance at which AMP does not achieve asymptotically Bayes optimal estimation,
and has been conjectured to be the computation threshold for mean estimation \citep{barbier2020mutual}.

Let $\mu_{\bs{y}, t}(\bs{\theta})$ be the posterior distribution of $\bs{\theta}$ given the measurements $\bs{y}$ and the additional stochastic localization observations $\bs{z}_t$ at time $t$:
\begin{align}
    \mu_{\bs{y}, t}(\bs{\theta}):=&\mathbb{P}(\bs{\theta}\mid \bs{y}, \bs{z}_t)
    =
    \frac{1}{Z(\bs{y}, t)}\exp\left(-\dfrac{\alpha}{2\Delta}\|\phi\bs{\theta}-\bs{y}\|_2^2-\dfrac{1}{2t}\|\bs{z}_t-t\bs{\theta}\|_2^2\right) P_0^{\bigotimes N}
    (\bs{\theta}),
\end{align}
where 
\begin{align}
Z(\bs{y}, t)=\displaystyle\int&\exp\left(-\dfrac{\alpha}{2\Delta}\|\phi\bs{\theta}-\bs{y}\|_2^2-\dfrac{1}{2t}\|\bs{z}_t-t\bs{\theta}\|_2^2\right)P_0^{\bigotimes N}(\bs{\theta})d\bs{\theta}.
\end{align}

The sequence of posteriors $\mu_{\bs{y}, t}(\bs{\theta})$ satisfies the following localization process properties \citep{chen2022localization}: 

\begin{itemize}
    \item The initialization is the posterior distribution $\mu_{\bs{y},0}(\bs{\theta})=\mu_{\bs{y}}(\bs{\theta})$.
    \item The final is a point mass at $\bs{\theta^*}$ sampled from $\mu_{\bs{y}}(\bs{\theta})$, i.e., $\mu_{\bs{y},\infty}(\bs{\theta})=\delta_{\bs{\theta}^*}$. 
    \item $\mu_{\bs{y}, t}(\bs{\theta})$ is a martingale.
\end{itemize}

The process $(\bs{z}_t)_{t\geq 0}$ is also the unique solution of the following stochastic differential equation (section 7.4 from \citet{liptser1977statistics}):
\begin{align}
    d\bs{z}_t=m'(\bs{z}_t, t)dt+d\bs{B}_t, \bs{z}_0=0,
    \label{SDE1'}
\end{align}
where $\bs{B}_t$ is an $N$-dimensional standard Brownian motion and $m'(\bs{z}, t)$ is the posterior mean defined as:
\begin{align}
    m'(\bs{z},t)=\E[\bs{\theta^*}\mid t\bs{\theta^*}+\bs{G}_t=\bs{z}].
\label{e8}
\end{align}
Let 
\begin{align}
    m(\bs{z},t)=\E[\bs{\theta}\mid \bs{\phi\theta}+\sqrt{\Delta/\alpha}\bs{w}=\bs{y}, t\bs{\theta}+\bs{G}_t=\bs{z}].
\label{e9}
\end{align}
We claim that the process $(\bs{z}_t)_{t\geq 0}$ is the unique solution of the following stochastic differential equation, which is rewritten from \eqref{SDE1'}:
\begin{align}
    d\bs{z}_t=m(\bs{z}_t, t)dt+d\bs{B}_t, \bs{z}_0=0,
    \label{SDE1}
\end{align}
since $m'(\bs{z},t)= m(\bs{z},t)$ (note that \eqref{e8} and \eqref{e9} are equivalent since $\bs{\theta^*}$ follows from the posterior distribution whereas $\bs{\theta}$ follows from the prior distribution).

Based on these facts, we can sample from the posterior distribution of $\bs{\theta}$ by tracking the process $\bs{z}_t$. 

\begin{itemize}
    \item First, we discretize the SDE \eqref{SDE1}.
    \item Then, we apply the AMP algorithm to approximate $m(\bs{z}_t,t)$ in each step.
    \item Finally, we use the approximation $\hat{\bs{z}}_T/T$ as a sample from the posterior distribution.
\end{itemize}

We prove convergence of the distribution generated by this algorithm under the smoothed KL divergence, and under the Wasserstein distance assuming a dimension-free bound on the operator norm of the covariance of the posterior distribution.

\subsection{Main contributions}\label{subsec:main_contribution}
We adopt the same framework as in \citet{montanari2023posterior} and provide a sampling algorithm for \eqref{model1} by approximating the posterior mean with the Bayes-AMP solution, and generating a diffusion process. 
We prove $L_2$ convergence results for the AMP solution to the posterior mean in \eqref{SDE1}, by leveraging recent extension of the state evolution analysis \citep{berthier2020state} to the general denoiser case.
As a consequence, 
we prove the convergence in smoothed KL divergence between the target distribution and the distribution generated by the algorithm.
Compared to similar stochastic localization sampling convergence guarantees in recent works for other statistical models \citep{montanari2023posterior}, 
our analysis pushed the computation threshold for sampling to $\Delta_{\rm AMP}$ defined in \citet{barbier2020mutual}, which is believed to be the computation threshold for mean estimation (See Section~\ref{sec_discussion}) and has also been proved to be the computation threshold for a certain class of first order algorithms \citep{cele2020theestimation}.
This characterization of the threshold implies that sampling succeeds for any fixed $\Delta$, as long as $\alpha$ is sufficiently large (Corollary~\ref{large alpha case}),
which is not obvious from previous results \citep{montanari2023posterior} only establishing the existence of a threshold.
We discuss the connection between overlap concentration and Lipschitz constant of $m(\bs{z}_t, t)$ with respect to $\bs{z}_t$ (See Appendix~\ref{sec_w}) and show that the convergence under the Wasserstein distance can be established
assuming that $m_t(\bs{z}_t):=m(\bs{z}_t, t)$ has a dimension free Lipschitz constant.
A key observation allowing us to determine the threshold is Proposition~\ref{prop_unique},
showing that $\Delta<\Delta_{\rm AMP}$ implies two favorable properties:
1) in the Gaussian interpolation argument for proving MSE optimality at fixed $t$, no phase transition occurs;
2) while changing $t$ in the diffusion process, no phase transition occurs, so that the posterior covariance is expected to have a dimension-free operator norm bound.
In contrast, prior works tackle similar issues by either choosing very small noise $\Delta$ (possibly smaller than $\Delta_{\rm AMP}$) 
\citep{montanari2023posterior},
or choosing $N$-dependent time-discretization, raising further issues that are difficult to justify (see Remark~\ref{rem5}).

\subsection{Related works}
{\bf Posterior mean estimation in linear models.}
Over the past decade, a substantial amount of work focused on developing the approximate message passing (AMP) algorithm to solve inverse problems \citep{rangan2011generalized}\citep{bayati2011dynamics},
which holds the promise of making Bayesian inference for high-dimensional models 
computationally tractable, 
in contrast to traditional MCMC algorithms.
The AMP algorithm is an iterative algorithm, which can be viewed as an (asymptotically equivalent) approximation of the message passing algorithm on a bipartite graph \citep{montanari2012graphical}.
For any given number of iterations and as $M,N\to\infty$, the limiting distributions of the state vectors of AMP can be tracked by the state evolution (SE) \citep{bayati2011dynamics}.
Thus by appropriately choosing the denoiser in AMP, 
the asymptotic error may satisfy the same fixed point equation as that of the minimum mean square error (mmse), which is achieved by the posterior mean (Bayes optimal) estimator. 
However, it is nontrivial to establish that the error achieved by AMP actually converges to mmse, as the fixed point may not be unique \citep{barbier2020mutual}. 
It has been conjectured that the algorithmic hard phase is characterized by the parameter regime where the AMP algorithm converges to the Bayes optimal solution (e.g.\ the discussions in \cite{montanari2023posterior} and \cite{barbier2020mutual}).

Variational inference is another popular class of algorithms for Bayesian inference, 
which leverages efficient local (first-order) optimization techniques.
In naive mean-field variational Bayes (NMF), 
a simple product structure is assumed for the posterior distribution, 
enabling fast computation. 
While the asymptotic correctness of NMF has been established under some assumptions such as $M>N$ and well-separatedness \citep{mukherjee2022variational}, 
it is expected that a higher order correction based on Bethe's approximation (also known as the TAP free energy) holds in more general settings \citep{fan2021tap}\cite{krzakala2014variational}.
Interestingly, the stationary points of the TAP free energy coincide with that of the AMP stationary points, and local stationary points that are close to global optimality ensures good posterior estimation \citep{fan2021tap};
however, it is nontrivial to find computationally efficient algorithms for locating such good local optimizers.

{\bf Posterior sampling.}
In Bayesian inference and ensemble learning, it is often useful to generate samples from the posterior distribution.
Sampling is often considered harder than mean estimation,
since given a good sampler (e.g.\ with error measured in the Wasserstein distance) one can estimate the mean by computing the average of the samples \citep{montanari2023posterior}.
Stochastic localization \citep{el2022sampling} recently emerged as a powerful tool for sampling using stochastic gradient methods,
where the gradient is a certain conditional mean. 
Therefore, existing algorithms for mean estimation, such as AMP, can be utilized, with proper modifications that incorporate an additional observation of the localization process.
Closely related to our work are recent deployments of the stochastic localization method in the Sherrington-Kirkpatrick (SK) Model and the spiked PCA \citep{montanari2023posterior}.
These works established the convergence under the Wasserstein distance,
and typically require establishing a dimension-free Lipschitz constant for the AMP estimator.
These conditions can be challenging to establish, 
and place restrictions on the range of noise variance that convergence of the algorithm can be established.
We discuss more in Section~\ref{sec_discussion}.

\section{METHOD}\label{sec:method}

Consider the data distribution model of \eqref{model1}. Suppose $\bs{\theta}$ is
drawn from $\pi=P_0^{\bigotimes N}$ supported on $[-L_\theta,L_\theta]^{N}$ where $P_0$ is known and independent of $N$.
To construct a diffusion process \eqref{SDE1} for sampling, we need to estimate the posterior mean $m(\bs{z}_t,t)$. To do this, we apply the AMP algorithm in equation(202-203) of \citet{berthier2020state} as follows:
\begin{align}
    \bs{r}^{(k)}&=\bs{y}-\bs{\phi}\hat{\bs{m}}^{(k)}+\bs{r}^{(k-1)}b_t^{(k)},\label{AMP1} \\
    \hat{\bs{m}}^{(k+1)}&=\eta(\bs{\phi}^{\top} \bs{r}^{(k)}+\hat{\bs{m}}^{(k)};\tau_{k,t}^2,\bs{z}_t,t),\ \label{AMP2}
\end{align}
where the initialization is given by $\hat{\bs{m}}^{(0)}=\bs{0}$ and 
\begin{align}
    b_t^{(k)} &= \dfrac{1}{N\alpha}\sum_{i=1}^N[\eta'(\bs{\phi}^T \bs{r}^{(k-1)}+\hat{\bs{m}}^{(k-1)};\tau_{k-1, t}^2,\bs{z}_{t}, t)]_i.\label{corr_term}
\end{align}

The denoiser $\eta(\bs{u};\Sigma^2, \bs{z}_t,t)$ is the MMSE estimator associated with the channels $\bs{u}=\bs{x}+\bs{v}\Sigma$ and $\bs{z}_t=t\bs{x}+\bs{g}_t$ where $\bs{u}, \bs{x}, \bs{v}, \bs{z}_t, \bs{g}_t\in\mathbb{R}^N$, $\bs{v}\sim\mathcal{N}(0, \bs{I}_N)$, and $\bs{g}_t$ is a standard N-dimensional Brownian motion. The $l$-th component of the the $N$-dimensional vector $\eta(\bs{u};\Sigma^2, \bs{z}_t,t)$ is given by:
\begin{align}
\frac{\displaystyle\int \bs{x}_lP_0(\bs{x}_l)\exp[-\frac{(\bs{z}_{t,l}-t\bs{x}_l)^2}{2t}-\frac{(\bs{u}_l-\bs{x}_l)^2}{2\Sigma^2}]d\bs{x}_l }{\displaystyle\int P_0(\bs{x}_l)\exp[-\frac{(\bs{z}_{t,l}-t\bs{x}_l)^2}{2t}-\frac{(\bs{u}_l-\bs{x}_l)^2}{2\Sigma^2}]d\bs{x}_l},
\end{align}
and $[\eta'(\bs{u};\Sigma^2, \bs{z}_t,t)]_i$ denotes the the i-th scalar component
of the gradient of $\eta$ w.r.t. its first argument.

$\tau_{k,t}$ is a sequence of variances defined by the following recursion:
\begin{align}
\tau_{0,t}^2&=\frac{\Delta+v}{\alpha}, \text{ where } v=\mathbb{E}_\pi(\|\bs{\theta}\|_2^2/N), \label{rec1}\\   
\tau_{k+1,t}^2&=\frac{\Delta+{\rm mmse}(\tau_{k,t}^{-2},t)}{\alpha},
\end{align}
where 
${\rm mmse}(\Sigma^{-2},t)\nonumber
:=\E_{\theta,\Tilde{G},\Tilde{W}}[|\theta-E[\theta|\theta+\Sigma\Tilde{G},t\theta+\sqrt{t}\Tilde{W}]|^2]$ in which $\Tilde{G}\sim N(0,1)$ and $\Tilde{W}\sim N(0,1)$ are independent. Moreover, we can write ${\rm mmse}(\Sigma^{-2},t)$ in another way as follows:
\begin{align}
    {\rm mmse}^*(\Sigma^{-2})&=\E_{\theta,\Tilde{G}}[|\theta-E[\theta|\theta+\Sigma\Tilde{G}]|^2] \label{mmse^*},\\
    {\rm mmse}(\Sigma^{-2},t)&={\rm mmse}^*(\Sigma^{-2}+t).
\end{align}

\begin{algorithm}[t]
\caption{Sampling Algorithm} \label{Alg1}
\begin{flushleft} 
\textbf{Input:} 
Data $\bs{y}$, Matrix $\bs{\phi}\in\mathbb{R}^{M\times N}$, parameters $(K, T, \delta)$;
\end{flushleft}
\begin{algorithmic} 
\State 1: Set $\tilde{\bs{z}}_0=\bs{0}_N$;
\State 2: Compute $v$, the prior mean of $\|\bs{\theta}\|^2$ divided by $N$;
\State 3: \textbf{for} $l=0, 1, \cdots, T/\delta-1$ \textbf{do}
\State 4: \indent Draw $\bs{B}_{l+1} \sim N(0, \bs{I}_N)$ independent of everything so far;
\State 5: \indent Let $\hat{m}(\tilde{\bs{z}}_{t_l}, t_l)$ be the output of Bayes AMP algorithm \eqref{AMP1}, \eqref{AMP2} with $K$ iterations;
\State 6: \indent Update $\tilde{\bs{z}}_{t_{l+1}}=\tilde{\bs{z}}_{t_l}+\hat{m}(\tilde{\bs{z}}_{t_l}, t_l)\delta+\sqrt{\delta}\bs{B}_{l+1}$;
\State 7: \textbf{end for}
\State 8: \textbf{return} $\bs{\theta}^{\text{alg}}=\tilde{\bs{z}}_{T}/T$

\end{algorithmic}
\begin{flushleft}
\textbf{Output:} $\bs{\theta}^{\text{alg}}$.
\end{flushleft}
\end{algorithm}

As ${\rm mmse}^*$ has an easier form, we will use it instead of ${\rm mmse}$ for calculation.
Actually, $\tau_{k,t}$ satisfies the following state evolution recursion by Lemma \ref{lemma_se}:
\begin{align}
\tau_{0,t}^2=&\frac{\Delta+v}{\alpha}, \text{ where } v=\mathbb{E}_\pi(\|\bs{\theta}\|_2^2/N),\label{se1}\\   
    \tau_{k+1,t}^2=&\frac{\Delta}{\alpha}+\lim_{N\rightarrow\infty}\dfrac{1}{\alpha N}\E_{\bs{\theta}, \tilde{\bs{Z}}}[\|\eta(\bs{\theta}+\tau_{k, t}\tilde{\bs{Z}};\tau_{k, t}^2,\bs{z}_t, t)-\bs{\theta}\|_2^2]
    \label{se2},
\end{align}
where $\tilde{\bs{Z}}\sim N(0, I_N)$. 

Under suitable conditions for $\Delta$, AMP algorithm outputs an asymptotically equivalent estimate of $m(\bs{z}_t,t)$ estimate as $N\to \infty$. Denote $\hat{m}(\bs{z}_t,t)$ as the approximation of $m(\bs{z}_t,t)$. Let $T=L\delta$ and $t_l=l\delta, l=1, \cdots, L$. Now, we construct the following three processes with the true posterior mean and its approximation given by AMP:
\begin{align}
    d\bs{z}_t&=m(\bs{z}_t, t)dt+d\bs{W}_t\label{process1}, \\
    d\hat{\bs{z}}_t&=\hat{m}(\hat{\bs{z}}_t, t)dt+d\bs{W}_t \label{process2},\\
    \tilde{\bs{z}}_{t_l}-\tilde{\bs{z}}_{t_{l-1}}&=\hat{m}(\tilde{\bs{z}}_{t_{l-1}}, t_{l-1})\delta+\bs{W}_{t_l}-\bs{W}_{t_{l-1}},  \quad\text{for } l=1,2,\cdots, L, \label{process3}
\end{align}
where $\bs{W}_t$ is a standard brownian motion, $\hat{m}$ is the output of AMP algorithm \eqref{AMP1}, \eqref{AMP2} with $K$ iterations. 

The first process is the same process as that defined in \eqref{SDE1}. The second process is a continuous process with drift term as estimated posterior mean $\hat{m}(\hat{\bs{z}}_t, t)$ replacing the true posterior mean $m(\bs{z}_t, t)$ in the first process \eqref{process1}. And the last process is the discrete version of the second process \eqref{process2}, which is the update step in our algorithm.

\section{CONVERGENCE GUARANTEES}\label{sec:con_guaran}
Now we provide a bound of the KL divergence between the distribution generated by our algorithm and by the first process\eqref{process1}. 

\begin{theorem}\label{theorem 7}
Consider the linear observation model in \eqref{model1}, 
with fixed measurement rate $\alpha$ and noise level $\Delta$,
and suppose that 
$\bs{\theta}$ has a product prior compactly supported on $[-L_\theta,L_\theta]^{N}$.
Suppose that $\Delta<\Delta_{\rm AMP}$, 
where $\Delta_{\rm AMP}$ is the threshold for mean estimation by the AMP algorithm  explicitly defined after \eqref{fixed point}. Then for any $\epsilon>0$ and $T>0$, there exist $K\in \mathbb{N}$ and $\delta\in\mathbb{R}^+$ such that 
Algorithm~\ref{Alg1} satisfies
the following with probability $1-o_N(1)$:
    \begin{align}
        \dfrac{1}{N}D_{\text{KL}}(P_{\bs{z}_T/T}\|P_{\bs{\theta}^{\text{alg}}})\leq\epsilon.
    \end{align}
\end{theorem}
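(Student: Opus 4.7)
My plan is to lift the endpoint comparison to a path-measure comparison, apply Girsanov's theorem, and split the resulting integrated drift discrepancy into an AMP approximation error and an Euler discretization error.

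First, since $\bs{x}\mapsto \bs{x}/T$ is a bijection and the terminal value is a deterministic function of the whole path, the data processing inequality gives
\begin{align*}
D_{\text{KL}}\!\left(P_{\bs{z}_T/T}\,\|\,P_{\bs{\theta}^{\text{alg}}}\right) = D_{\text{KL}}\!\left(P_{\bs{z}_T}\,\|\,P_{\tilde{\bs{z}}_T}\right) \le D_{\text{KL}}\!\left(P^{\bs{z}}\,\|\,P^{\tilde{\bs{z}}}\right),
\end{align*}
where $P^{\bs{z}}, P^{\tilde{\bs{z}}}$ denote the laws on $C([0,T];\mathbb{R}^N)$ of the processes \eqref{process1} and \eqref{process3}. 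Coupling both processes to the same Brownian motion $\bs{W}_t$ and regarding $\tilde{\bs z}$ as the solution of a (path-dependent) SDE with piecewise-constant drift $\tilde m(\bs{x},t)=\hat m(\bs{x}_{t_l},t_l)$ for $t\in [t_l,t_{l+1})$, Girsanov's theorem (whose Novikov condition is automatic since both $m$ and $\hat m$ are coordinatewise bounded by $L_\theta$ due to the compact support of $P_0$) yields
\begin{align*}
D_{\text{KL}}\!\left(P^{\bs{z}}\,\|\,P^{\tilde{\bs{z}}}\right) = \frac{1}{2}\,\E\!\int_0^T \bigl\|m(\bs{z}_t,t)-\hat m(\bs{z}_{t_l},t_l)\bigr\|_2^2\,dt .
\end{align*}

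Next I would add and subtract $\hat m(\bs{z}_t,t)$ inside the norm and use $\|a-c\|_2^2\le 2\|a-b\|_2^2+2\|b-c\|_2^2$, obtaining an ``AMP approximation'' piece $\|m(\bs{z}_t,t)-\hat m(\bs{z}_t,t)\|_2^2$ and a ``time-regularity'' piece $\|\hat m(\bs{z}_t,t)-\hat m(\bs{z}_{t_l},t_l)\|_2^2$. For the first piece, the state-evolution analysis (Theorem~\ref{thm1} and Lemma~\ref{lemma_se}) together with the hypothesis $\Delta<\Delta_{\rm AMP}$, which by Proposition~\ref{prop_unique} guarantees uniqueness of the AMP fixed point along the entire $t$-path, will produce the pointwise bound $\tfrac{1}{N}\E\|m(\bs{z}_t,t)-\hat m(\bs{z}_t,t)\|_2^2 \le \epsilon/(8T)$ for $K=K(\epsilon,t)$ large enough. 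For the second piece, I would combine the elementary $L^2$ estimate $\E\|\bs{z}_t-\bs{z}_{t_l}\|_2^2 = O(N\delta)$ (from the bounded drift $m$ and the Brownian increment) with the coordinatewise smoothness of the Bayes-AMP denoiser $\eta$ in its $(\bs{u},\Sigma^2,t)$ arguments propagated across the $K$ iterations, yielding $\tfrac{1}{N}\E\|\hat m(\bs{z}_t,t)-\hat m(\bs{z}_{t_l},t_l)\|_2^2=O(\delta)$ and hence a total contribution of order $T\delta$.

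The main obstacle is upgrading the AMP error bound from pointwise in $t$ to uniform in $t\in[0,T]$, because the Girsanov integral demands integration over time. I plan to address this by combining (i) continuity in $t$ of the state-evolution fixed point $\tau_{\infty,t}$ below $\Delta_{\rm AMP}$ (guaranteed by Proposition~\ref{prop_unique}), (ii) a finite $\epsilon$-net $\{s_1,\ldots,s_J\}\subset[0,T]$ on which the pointwise bound is applied, and (iii) the a priori control $\|m(\bs{z}_t,t)-\hat m(\bs{z}_t,t)\|_2^2\le 4NL_\theta^2$ that lets one interpolate between grid points via dominated convergence. Choosing $K$ first (to make the AMP piece small) and then $\delta$ small enough (for the discretization piece), summing the two bounds and dividing by $N$ yields $\tfrac{1}{N}D_{\text{KL}}\le \epsilon$ with probability $1-o_N(1)$.
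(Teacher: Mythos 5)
Your proposal matches the paper's proof in all essentials: a data-processing reduction to the path measures followed by Girsanov's theorem, a decomposition of the drift discrepancy into an AMP-approximation error (handled by Theorem~\ref{thm1} and Proposition~\ref{prop_unique} under $\Delta<\Delta_{\rm AMP}$) plus a discretization error (handled by Lipschitz regularity of $\hat m$ in $z$ and $t$, i.e.\ Lemmas~\ref{lemma10} and~\ref{lemma11}), and finally choosing $K$ before $\delta$. The paper splits the discrepancy into three terms rather than your two and handles the uniform-in-$t$ AMP convergence via dominated convergence alone rather than an $\epsilon$-net, but these are cosmetic rather than substantive differences.
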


\begin{remark}
    $P_{\bs{z}_T/T}$ is the distribution of $\bs{\theta}^*+T^{-1/2}\bs{G}$, $\bs{G}\sim \mathcal{N}(0, I_N)$ rather than the true posterior. When $T$ is large enough, we can consider it as a smoothed version of the target distribution.
    In fact, if the true posterior density is smooth, it is possible to show that $P_{\bs{z}_T/T}$ is close to the true posterior using the Fourier analysis (see e.g.\ \cite[Theorem~E.1]{zhang2024minimax}).
\end{remark}

\begin{remark}
    Here we used Theorems 3.1 and 3.2 of \cite{barbier2020mutual}, which is stated for discrete priors. However, as pointed out in \citet[Remark 3.6]{barbier2020mutual}, the prior could be extended to more generalized distributions. The constraint of the prior is relaxed in \citet{barbier2019optimal}. Therefore, we can consider this more general setting, a prior with bounded support.
\end{remark}
\begin{remark}
    We also mention here that under a regularity assumption that the derivative of ${\rm mmse}^*(\Sigma^{-2})$ is continuous at $\Sigma^2=0$, Theorem ~\ref{theorem 7} remains valid for any fixed $\Delta$ , when $\alpha$ is sufficiently large. We state this as a corollary and will prove it in the appendix.
\end{remark}
\begin{corollary}\label{large alpha case}
If the derivative of ${\rm mmse}^*(\Sigma^{-2})$ is continuous at $\Sigma^2=0$, the the sampling algorithm succeeds for any fixed $\Delta$ when $\alpha$ is sufficiently large.
\end{corollary}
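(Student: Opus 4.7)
The plan is to reduce Corollary~\ref{large alpha case} to Theorem~\ref{theorem 7} by showing that, for any fixed noise level $\Delta$, one has $\Delta<\Delta_{\rm AMP}$ whenever $\alpha$ is sufficiently large. Writing $h(\Sigma^2):={\rm mmse}^*(\Sigma^{-2})$ and $F_\alpha(\tau^2):=(\Delta+h(\tau^2))/\alpha$, the state-evolution recursion \eqref{se1}--\eqref{se2} at $t=0$ reads $\tau_{k+1}^2=F_\alpha(\tau_k^2)$. Because ${\rm mmse}^*$ is bounded by the prior variance $v$, every fixed point of $F_\alpha$ lies in $[0,(\Delta+v)/\alpha]$, an interval that collapses to $\{0\}$ as $\alpha\to\infty$. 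This shrinking window of admissible fixed points is the geometric fact driving the whole argument.

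First I would observe that $F_\alpha$ is non-decreasing on $[0,\infty)$ (since $h$ is non-decreasing in $\Sigma^2$), so the SE iteration initialized at $\tau_0^2=(\Delta+v)/\alpha$ produces a monotonically non-increasing sequence whose limit is the largest fixed point of $F_\alpha$. The characterization of $\Delta_{\rm AMP}$ in \cite{barbier2020mutual} amounts to saying that $\Delta<\Delta_{\rm AMP}$ precisely when this limiting fixed point coincides with the Bayes-optimal (minimum MSE) one; in particular, uniqueness of the fixed point is a sufficient condition.

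The key analytic step is to combine the shrinking window with the regularity hypothesis. By continuity of $h'$ at $\Sigma^2=0$, the number $L:=\lim_{\Sigma^2\to 0^+} h'(\Sigma^2)$ is finite, so for any $\varepsilon>0$ there exists $\eta>0$ with $h'(\Sigma^2)\le L+\varepsilon$ on $[0,\eta]$. Since $F_\alpha'(\tau^2)=h'(\tau^2)/\alpha$, once $\alpha$ is large enough that both $(\Delta+v)/\alpha<\eta$ and $\alpha>L+1$ hold, the map $F_\alpha$ is a strict contraction on $[0,(\Delta+v)/\alpha]$, hence admits a unique fixed point there. Combined with the previous paragraph, this yields $\Delta<\Delta_{\rm AMP}$, and Theorem~\ref{theorem 7} delivers the smoothed-KL bound.

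The main obstacle I anticipate is making fully rigorous the equivalence ``unique SE fixed point $\Rightarrow$ $\Delta<\Delta_{\rm AMP}$''. In \cite{barbier2020mutual} the threshold is phrased through a replica-symmetric potential whose critical points coincide with the SE fixed points, and one must verify that under uniqueness the single critical point is simultaneously the AMP limit and the global minimizer of the potential. This is a routine consequence of the I-MMSE identity and the monotonicity of the SE iterate, but it should be cited carefully. The remainder of the argument — boundedness of ${\rm mmse}^*$, monotonicity of $F_\alpha$, and the contraction estimate near $\tau^2=0$ — is a soft combination of the ingredients above.
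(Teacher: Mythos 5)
Your proof is correct and follows essentially the same approach as the paper: both exploit the fact that the regularity hypothesis bounds the derivative of $\Sigma^2\mapsto{\rm mmse}^*(\Sigma^{-2})$ near zero, while the shrinking window of admissible fixed points as $\alpha\to\infty$ makes the effective derivative of the fixed-point map of order $1/\alpha$, forcing uniqueness. The paper phrases this via the mean value theorem on the $E$-recursion (if two fixed points exist, some intermediate $E_3$ has slope one, contradicting the $O(1/\alpha)$ bound), whereas you phrase it as a Banach-style contraction estimate on the $\tau^2$-recursion; these are interchangeable. One small remark on the obstacle you flag: in this paper $\Delta_{\rm AMP}$ is \emph{defined} directly as the supremum of $\Delta$ for which the state-evolution fixed point at $t=0$ is unique for every noise value in $[0,\Delta]$, so the implication ``unique SE fixed point $\Rightarrow \Delta<\Delta_{\rm AMP}$'' does not require detouring through the replica-symmetric potential; you only need to observe (as your argument already implicitly gives) that your choice of $\alpha$ yields uniqueness simultaneously for all $\Delta'\le\Delta$, since the fixed-point interval $[0,(\Delta'+v)/\alpha]$ is only smaller and the same derivative bound applies.
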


{\bf Sketch of proof for Theorem~\ref{theorem 7}}

 {\bf Step 1} For any $\Delta$ and $t$, we can show that AMP convergence implies that MSE equals the solution of a certain fixed point equation.
Denote the asymptotic MSE obtained by AMP at time $t$ and iteration $k$ as
\begin{align}\label{etk}
E_{t,\Delta}^{(k)}:=\lim_{N\rightarrow\infty}\dfrac{1}{N}\|\bs{\theta}-\hat{m}^{(k)}(\bs{z}_t,t)\|_2^2
\end{align}
where $\hat{m}^{(k)}(\bs{z}_t,t)$ is the output of AMP algorithm at time $t$ and iteration $k$. This asymptotic MSE can actually be tracked by the state evolution recursion by Lemma \ref{lemma_einf} given by:
\begin{align}
    E_{t,\Delta}^{(k+1)}={\rm mmse}^*\left(\dfrac{\alpha}{\Delta+E_{t,\Delta}^{(k)}}+t\right).
\end{align}

The function ${\rm mmse}^*$ defined in \eqref{mmse^*} is a monotone function which indicates that $E_{t,\Delta}^{(k)}$ is a decreasing sequence w.r.t. $k$. Therefore, $\lim_{k\rightarrow\infty}E_{t,\Delta}^{(k)}=E_{t,\Delta}^{(\infty)}$ exists and is the fixed point of the equation given as follows:
\begin{align}
    E={\rm mmse}^*\left(\frac{\alpha}{\Delta+E}+t\right).\label{fixed point}
\end{align}
Moreover, following \cite{barbier2020mutual}, we define $\Delta_{\rm AMP}$ as the supremum of all $\Delta$ such that the state evolution fixed point equation \eqref{fixed point} has a unique solution for all noise value in $[0, \Delta]$ when $t=0$. 
With the definition of $\Delta_{\rm AMP}$, it can be shown that $E_{t,\Delta}^{(\infty)}$ is unique for all $t$ if $\Delta\in [0,\Delta_{\rm AMP}]$, which indicates that $\Delta_{\rm AMP}$ is also the threshold for AMP applied in Algorithm \ref{Alg1}:

\begin{proposition}
\label{prop_unique}
For given $(\Delta,t)$,
suppose that  $(\Delta_0,E_0)$ satisfies
\begin{align}
E_0={\rm mmse^*}\left(
\frac{\alpha}{\Delta+E_0}+t
\right)={\rm mmse^*}\left(
\frac{\alpha}{\Delta_0+E_0}
\right).
\end{align}
Then $\Delta_0<\Delta_{\rm AMP}$
is a sufficient condition for $E=E_0$ to be the unique solution to
\begin{align}
E={\rm mmse^*}\left(
\frac{\alpha}{\Delta+E}+t
\right).
\label{e_fixedpt}
\end{align}
In particular, 
$\Delta<\Delta_{\rm AMP}$ is a sufficient condition for $E=E_0$ to be the unique solution.
\end{proposition}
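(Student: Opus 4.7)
\textbf{Proof proposal for Proposition~\ref{prop_unique}.}

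The plan is to reduce uniqueness of the $(\Delta,t)$-equation to the uniqueness of the $t=0$ equation below $\Delta_{\rm AMP}$ by parametrizing fixed points through the inverse of ${\rm mmse}^*$. Set $\sigma(E) := ({\rm mmse}^*)^{-1}(E)$; since the prior is non-degenerate and compactly supported, ${\rm mmse}^*$ is strictly decreasing and continuous in SNR, so $\sigma$ is strictly decreasing and continuous. Because ${\rm mmse}^*$ is injective, $E$ solves \eqref{e_fixedpt} iff $\tilde N_t(E) = \Delta$, where
\begin{align*}
\tilde N_t(E) := \frac{\alpha}{\sigma(E)-t}-E = \tilde N_0(E) + C(E),\qquad C(E) := \frac{\alpha t}{\sigma(E)\,(\sigma(E)-t)},
\end{align*}
valid on $\{E : \sigma(E) > t\}$. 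Similarly, $E$ is a fixed point of the $t=0$ equation at noise $\tilde\Delta$ iff $\tilde N_0(E) = \tilde\Delta$, so the two hypotheses become $\tilde N_t(E_0) = \Delta$ and $\tilde N_0(E_0) = \Delta_0$.

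The first step is to verify two monotonicity statements. Strict monotonicity of $C$ in $E$ (for $t > 0$) is immediate: $\sigma(E)$ and $\sigma(E)-t$ are positive and strictly decreasing, so the denominator $\sigma(E)(\sigma(E)-t)$ is strictly decreasing, making $C$ strictly increasing. For $\tilde N_0$, I claim it is strictly increasing on the connected component of $\{E:\tilde N_0(E)<\Delta_{\rm AMP}\}$ containing $0$, which I call $[0,E^*)$. Indeed, $\tilde N_0(0)=0$, and any pair $E_1<E_2$ in $[0,E^*)$ with $\tilde N_0(E_1)\geq \tilde N_0(E_2)$ would produce at least two preimages of some $\tilde\Delta<\Delta_{\rm AMP}$: one via IVT on $[E_1,E_2]$ and another via IVT on $[0,E_1]$ using $\tilde N_0(0)=0$, contradicting the definition of $\Delta_{\rm AMP}$. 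Consequently $\tilde N_0:[0,E^*)\to[0,\Delta_{\rm AMP})$ is a bijection, and the unique preimage of $\Delta_0$ lies in $[0,E^*)$, so $E_0\in[0,E^*)$.

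With these pieces in place, suppose $E'\neq E_0$ is another solution of \eqref{e_fixedpt}. The identity $\tilde N_t(E_0)=\tilde N_t(E')=\Delta$ rewrites as $\tilde N_0(E_0)-\tilde N_0(E')=C(E')-C(E_0)$. If $E'>E_0$, the right-hand side is positive so $\tilde N_0(E')<\Delta_0<\Delta_{\rm AMP}$, forcing $E'\in[0,E^*)$ by uniqueness of preimages below $\Delta_{\rm AMP}$; but strict monotonicity on $[0,E^*)$ gives $\tilde N_0(E')>\tilde N_0(E_0)$, a contradiction. If $E'<E_0$, then $E'\in[0,E_0)\subset[0,E^*)$, so strict monotonicity gives $\tilde N_0(E')<\Delta_0$, while the right-hand side being negative gives $\tilde N_0(E')>\Delta_0$, again a contradiction. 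For the \emph{in particular} claim, the identity $\alpha/(\Delta+E_0)+t=\alpha/(\Delta_0+E_0)$ together with $t\geq 0$ yields $\Delta_0\leq\Delta$, so $\Delta<\Delta_{\rm AMP}$ implies $\Delta_0<\Delta_{\rm AMP}$, and the main part applies once we note existence of $E_0$ from IVT in $E$ applied to $E\mapsto {\rm mmse}^*(\alpha/(\Delta+E)+t)-E$.

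The main obstacle I expect is the monotonicity claim for $\tilde N_0$ on the good branch; this is a clean IVT argument using only uniqueness of preimages below $\Delta_{\rm AMP}$, but it relies on continuity of ${\rm mmse}^*$ in SNR, which is standard for compactly supported priors. A minor bookkeeping point is the domain constraint $\sigma(E)>t$, which is automatic for any solution since $\sigma(E)=\alpha/(\Delta+E)+t>t$. The rest of the argument is routine algebra and a two-case analysis.
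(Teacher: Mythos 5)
Your proof is correct, but it takes a genuinely different route from the paper's. The paper argues directly on the original scale: since $\Delta_0<\Delta_{\rm AMP}$, the continuous map $E\mapsto E-{\rm mmse}^*\bigl(\tfrac{\alpha}{\Delta_0+E}\bigr)$ is negative at $E=0$ (because ${\rm mmse}^*(\alpha/\Delta_0)>0$), positive for large $E$, and vanishes only at $E_0$; hence it is negative on $[0,E_0)$ and positive on $(E_0,\infty)$. Then for a hypothetical second fixed point $E_1>E_0$ of \eqref{e_fixedpt}, the elementary comparison $\tfrac{\alpha}{\Delta+E_1}+t>\tfrac{\alpha}{\Delta_0+E_1}$ (which follows from $\tfrac{\alpha}{\Delta+E_0}+t=\tfrac{\alpha}{\Delta_0+E_0}$, $\Delta>\Delta_0$ and $E_1>E_0$) together with monotonicity of ${\rm mmse}^*$ gives $E_1>{\rm mmse}^*\bigl(\tfrac{\alpha}{\Delta+E_1}+t\bigr)$, a contradiction, and symmetrically for $E_1<E_0$. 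Your version instead inverts ${\rm mmse}^*$, rewrites fixed points as level sets $\tilde N_t(E)=\Delta$, and splits $\tilde N_t=\tilde N_0+C$; this is a structurally illuminating reformulation (it makes explicit that turning on $t>0$ perturbs the effective noise monotonically), but at the cost of more machinery — you must establish that $\tilde N_0$ is a bijection from the branch $[0,E^*)$ onto $[0,\Delta_{\rm AMP})$ and that any competing fixed point lands on that branch. The one step to tighten is ``$\tilde N_0(E')<\Delta_0<\Delta_{\rm AMP}$, forcing $E'\in[0,E^*)$'': the uniqueness guaranteed by $\Delta_{\rm AMP}$ applies only to noise levels in $[0,\Delta_{\rm AMP})$, so you also need $\tilde N_0(E')\geq 0$. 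This is true — because $\Delta_{\rm AMP}>0$ forces the $\Delta=0$ fixed point to be unique (namely $E=0$), $\tilde N_0$ cannot vanish on $(0,v)$, and since $\tilde N_0(E)\to+\infty$ as $E\to v^-$ it must be strictly positive there — but you should say so; the same fact quietly underpins your IVT argument for monotonicity of $\tilde N_0$. The paper's direct sign argument sidesteps this branch/positivity bookkeeping entirely.
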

\begin{proof}
Since $\Delta_0<\Delta<\Delta_{\rm AMP}$, $E_0$ is the unique solution to 
\begin{align}
E={\rm mmse^*}\left(
\frac{\alpha}{\Delta_0+E}
\right).
\end{align}
Since $0<{\rm mmse^*}\left(\frac{\alpha}{\Delta_0+0}\right)$ and $\infty>{\rm mmse^*}\left(\frac{\alpha}{\Delta_0+\infty}\right)$, 
using the fact that $E\mapsto {\rm mmse}^*(\frac{\alpha}{\Delta_0+E})$ is increasing,
we see that
\begin{align}
E< {\rm mmse^*}\left(
\frac{\alpha}{\Delta_0+E}
\right),
\quad\forall
E<E_0
\end{align}
and 
\begin{align}
E>{\rm mmse^*}\left(
\frac{\alpha}{\Delta_0+E}
\right),
\quad\forall
E>E_0.
\label{e33}
\end{align}
Now if $E_1>E_0$ is another fixed point for \eqref{e_fixedpt}, 
we have $\frac{\alpha}{\Delta+E_1}+t> \frac{\alpha}{\Delta_0+E_1}$ because 
$\frac{\alpha}{\Delta+E_0}+t= \frac{\alpha}{\Delta_0+E_0}$ and $\Delta>\Delta_0$.
Therefore 
\begin{align}
{\rm mmse^*}\left(\frac{\alpha}{\Delta+E_1}+t\right)\le  {\rm mmse^*}\left(\frac{\alpha}{\Delta_0+E_1}\right),
\end{align}
which, combined with \eqref{e33}, implies that $E_1>{\rm mmse^*}\left(\frac{\alpha}{\Delta+E_1}+t\right)$, a contradiction. 
Similarly, we can also show that it is impossible to have another fixed point $E_1<E_0$.
\end{proof}

{\bf Step 2} Next, we construct the family of observation models in \citet[(37)]{barbier2020mutual}, which is just our model if we let $\tilde{\bs{y}}=\bs{z}_t/t$:
\begin{align}
    \bs{y} &= \bs{\phi\theta}+\sqrt{\frac{\Delta}{\alpha}}\bs{w} \label{model0};\\
    \tilde{\bs{y}}&=\bs{\theta}+\frac{1}{\sqrt{t}}\bs{G}_t.
    \label{model_1}
\end{align}
where $G_t\sim \mathcal{N}(0,I)$ is independent of $(\bs{w,\phi,\theta})$. 
We can then derive the mutual information of the above model, denoted as $i_{t,\Delta}$ as:
\begin{align}
    i_{t,\Delta} = -(\dfrac{\alpha}{2}+1)-\dfrac{1}{N}\E[\ln Z_{t,\Delta}(\bs{y}, \tilde{\bs{y}}) ]
    \label{is}.
\end{align}
where $ Z_{t,\Delta}$ is the partition function defined as follows:
\begin{align}
    &Z_{t,\Delta}(\bs{y}, \tilde{\bs{y}})\nonumber\\
    =&\int d\bs{x}\exp\left(-H_{t,\Delta}(\bs{x}\mid \bs{y}, \tilde{\bs{y}})\right)P_0^{\bigotimes N}(\bs{x}),\\
    &H_{t,\Delta}(\bs{x}\mid \bs{y}, \tilde{\bs{y}})\nonumber\\
    =&\dfrac{\alpha}{2\Delta}\|\phi\bar{\bs{x}}-\sqrt{\frac{\Delta}{\alpha}}\bs{w}\|_2^2+\dfrac{t}{2}\|\bar{\bs{x}}-\frac{1}{\sqrt{t}}\bs{G}_t\|_2^2.
\end{align}

Here we use the notation $\bar{\bs{x}}=\bs{x}-\bs{\theta}$. We can then calculate the derivative of $i_{t,\Delta}$ w.r.t. $t$ by the quantities above. Also, by Lemma 4.5 and Lemma 4.6 of \citet{barbier2020mutual}, the relationship between ${\rm mmse}_{t,\Delta}$ and MI is given, where ${\rm mmse}_{t,\Delta}$ and ${\rm ymmse}_{t,\Delta}$ are given as follows:
\begin{align}
    E_{t,\Delta} := {\rm mmse}_{t,\Delta}&=\dfrac{1}{N}\E[\|\bs{\theta}-\langle\bs{X}\rangle_{t,\Delta}\|_2^2]\\
    {\rm ymmse}_{t,\Delta}&=\dfrac{1}{M}\E[\|\bs\phi(\bs{\theta}-\langle\bs{X}\rangle_{t,\Delta})\|_2^2].
\end{align}

\begin{lemma}\label{lemma4}
Model \eqref{model0} verifies the following I-MMSE relation
\begin{align}
    \dfrac{\partial i_{t,\Delta}}{\partial\Delta^{-1}}=\frac{\alpha}{2}{\rm ymmse}_{t,\Delta}.
\end{align}
\end{lemma}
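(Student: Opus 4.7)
The plan is to differentiate the log-partition function representation and apply Gaussian integration by parts on the noise $\bs{w}$. Writing $\lambda:=\Delta^{-1}$, the additive constant $-(\alpha/2+1)$ in the definition of $i_{t,\Delta}$ is $\lambda$-free, so $\partial_\lambda i_{t,\Delta}=-\tfrac{1}{N}\E[\partial_\lambda\ln Z_{t,\Delta}]$. Expanding the first summand of $H_{t,\Delta}$,
\begin{align*}
\tfrac{\alpha}{2\Delta}\|\bs{\phi}\bar{\bs{x}}-\sqrt{\Delta/\alpha}\,\bs{w}\|^2=\tfrac{\alpha\lambda}{2}\|\bs{\phi}\bar{\bs{x}}\|^2-\sqrt{\alpha\lambda}\,\langle\bs{\phi}\bar{\bs{x}},\bs{w}\rangle+\tfrac{1}{2}\|\bs{w}\|^2,
\end{align*}
isolates the $\lambda$-dependence. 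Differentiating inside the $\ln$ (with $\langle\cdot\rangle$ denoting the $\bs{x}$-Gibbs average under $H_{t,\Delta}$) gives
\begin{align*}
\partial_\lambda\ln Z_{t,\Delta}=-\tfrac{\alpha}{2}\langle\|\bs{\phi}\bar{\bs{x}}\|^2\rangle+\tfrac{1}{2}\sqrt{\alpha/\lambda}\,\langle\langle\bs{\phi}\bar{\bs{x}},\bs{w}\rangle\rangle.
\end{align*}

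The decisive step is Stein's identity on the cross term. Because $\bs{w}\sim\mathcal{N}(0,\bs{I}_M)$ and $\partial_{w_i}H_{t,\Delta}=w_i-\sqrt{\alpha\lambda}(\bs{\phi}\bar{\bs{x}})_i$, the Gibbs derivative formula yields
\begin{align*}
\E\!\bigl[w_i\langle(\bs{\phi}\bar{\bs{x}})_i\rangle\bigr]=\E\!\bigl[\partial_{w_i}\langle(\bs{\phi}\bar{\bs{x}})_i\rangle\bigr]=\sqrt{\alpha\lambda}\,\E\!\bigl[\langle(\bs{\phi}\bar{\bs{x}})_i^2\rangle-\langle(\bs{\phi}\bar{\bs{x}})_i\rangle^2\bigr],
\end{align*}
the $w_i$-linear contributions cancelling inside the Gibbs derivative. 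Summing over $i$ and substituting back, the Gibbs-variance pieces exactly cancel against the first summand (thanks to $\sqrt{\alpha/\lambda}\cdot\sqrt{\alpha\lambda}=\alpha$), leaving only the squared Gibbs mean:
\begin{align*}
\E[\partial_\lambda\ln Z_{t,\Delta}]=-\tfrac{\alpha}{2}\E\|\bs{\phi}(\bs{\theta}-\langle\bs{X}\rangle_{t,\Delta})\|^2.
\end{align*}
Dividing by $-N$ and invoking the definition of $\mathrm{ymmse}_{t,\Delta}$ together with $M/N=\alpha$ delivers the asserted identity.

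An equivalent and conceptually cleaner route is to recognize that $i_{t,\Delta}$ coincides with $\tfrac{1}{N}I(\bs{\theta};\bs{y}\mid\bs{\phi},\tilde{\bs{y}})$ up to a $\lambda$-independent constant (the $\Delta$-dependent Gaussian normalization in $p(\bs{y}\mid\bs{\phi},\bs{\theta})$ cancels against the noise entropy), rescale $\sqrt{\alpha/\Delta}\,\bs{y}=\sqrt{s}\,\bs{\phi\theta}+\bs{w}$ with SNR $s=\alpha/\Delta$, and apply the Guo–Shamai–Verdú I-MMSE formula to the effective signal $\bs{\phi\theta}$: since the likelihood factors through $\bs{\phi\theta}$, one obtains $\partial_s I=\tfrac{1}{2}\E\|\bs{\phi}(\bs{\theta}-\langle\bs{X}\rangle_{t,\Delta})\|^2$, and the chain rule $ds/d\lambda=\alpha$ reproduces the same expression. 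No step presents a genuine obstacle here; this is a standard I-MMSE/free-energy manipulation. The only care required is bookkeeping the $\Delta$-dependent pieces (the noise variance enters both as the coefficient of $\|\bs{\phi}\bar{\bs{x}}\|^2$ and as the normalization of $\bs{w}$) and verifying that the $w_i$-linear pieces in the Gibbs derivative cancel, so that only the MMSE-type quantity $\E\|\bs{\phi}(\bs{\theta}-\langle\bs{X}\rangle_{t,\Delta})\|^2$ survives.
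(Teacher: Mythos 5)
Your approach is the right one, and it is essentially the only one: differentiate the log-partition function in $\lambda=\Delta^{-1}$, Gaussian-integrate by parts on $\bs{w}$, and let the Gibbs-variance pieces cancel the $\langle\|\bs{\phi}\bar{\bs{x}}\|^2\rangle$ term, leaving $\E[\partial_\lambda\ln Z_{t,\Delta}]=-\tfrac{\alpha}{2}\E\|\bs{\phi}(\bs{\theta}-\langle\bs{X}\rangle_{t,\Delta})\|_2^2$. Your alternative I-MMSE phrasing with effective SNR $s=\alpha/\Delta$ is the same computation. The paper itself does not give a proof of this lemma --- it is cited from Lemma 4.5 of Barbier et al.\ (2020) --- so you are genuinely filling in a gap, and the core computation is executed correctly, including the cancellation of the $w_i$-linear pieces in the Gibbs derivative.

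Where your write-up is not careful is the very last line, and it exposes an inconsistency you should have flagged rather than elided. Your (correct) intermediate result is
\begin{align*}
\frac{\partial i_{t,\Delta}}{\partial\Delta^{-1}}
=-\frac{1}{N}\E[\partial_\lambda\ln Z_{t,\Delta}]
=\frac{\alpha}{2N}\,\E\|\bs{\phi}(\bs{\theta}-\langle\bs{X}\rangle_{t,\Delta})\|_2^2.
\end{align*}
With the paper's stated normalization ${\rm ymmse}_{t,\Delta}=\tfrac{1}{M}\E\|\bs{\phi}(\bs{\theta}-\langle\bs{X}\rangle_{t,\Delta})\|_2^2$ and $M=\alpha N$, this equals $\tfrac{\alpha^2}{2}{\rm ymmse}_{t,\Delta}$, \emph{not} $\tfrac{\alpha}{2}{\rm ymmse}_{t,\Delta}$, so ``invoking the definition of ${\rm ymmse}_{t,\Delta}$ together with $M/N=\alpha$'' does not deliver the stated identity; it delivers an extra factor of $\alpha$. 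The resolution is almost certainly that the paper's ${\rm ymmse}$ should be normalized by $N$ rather than $M$ (as it effectively is in Barbier et al., where the design matrix has $\mathcal{N}(0,1/N)$ entries, whereas here $\bs{\phi}_{ij}\sim\mathcal{N}(0,1/M)$, so $\bs{F}=\sqrt{\alpha}\bs{\phi}$ and dividing by $m$ corresponds to dividing by $N$ here). This is corroborated by Lemma~\ref{lemma5}: since $\E\|\bs{\phi}\bs{v}\|^2\approx\|\bs{v}\|^2$ for this ensemble, $\tfrac{1}{M}\E\|\bs{\phi}(\bs{\theta}-\langle\bs{X}\rangle)\|^2\approx\alpha^{-1}E_{t,\Delta}$, which cannot equal $E_{t,\Delta}/(1+E_{t,\Delta}/\Delta)$ for generic $\alpha$, whereas $\tfrac{1}{N}\E\|\bs{\phi}(\bs{\theta}-\langle\bs{X}\rangle)\|^2\approx E_{t,\Delta}$ is of the right order. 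In short: the Stein/free-energy manipulation is correct and reaches the right intermediate identity, but you should state that the paper's $\tfrac{1}{M}$ in the definition of ${\rm ymmse}$ is inconsistent with the $\tfrac{\alpha}{2}$ prefactor in the lemma (and with Lemma~\ref{lemma5}), rather than presenting the last step as if it goes through with the normalizations as written.
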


\begin{lemma}\label{lemma5}
For a.e. $t$,
\begin{align}
    {\rm ymmse}_{t,\Delta} = \frac{E_{t,\Delta}}{1+E_{t,\Delta}/\Delta} + o_N(1).
\end{align}
\end{lemma}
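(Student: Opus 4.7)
The statement directly extends Lemma~4.6 of \citet{barbier2020mutual} from the pure linear model to the augmented model in which the side observation $\tilde{\bs{y}}=\bs{\theta}+\bs{G}_t/\sqrt{t}$ is present alongside $\bs{y}$. Since the side channel does not involve $\bs{\phi}$, it only reweights the posterior and never touches the ``measurement geometry'' of the quadratic $\|\bs{\phi}(\bs{\theta}-\langle \bs{X}\rangle_{t,\Delta})\|^2$. The plan is therefore to run a three-step argument essentially in parallel with \citet{barbier2020mutual}, checking the few places where the new factor $\exp(-\tfrac{1}{2t}\|\bs{z}_t-t\bs{\theta}\|^2)$ in $\mu_{\bs{y},t}$ enters.

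First, since the whole model (with both $\bs{y}$ and $\tilde{\bs{y}}$) is Bayes-optimal, the Nishimori identity applies and yields
\begin{align}
\E[\|\bs{\phi}(\bs{\theta}-\langle \bs{X}\rangle_{t,\Delta})\|_2^2]=\E[\|\bs{\phi}\bs{\theta}\|_2^2]-\E[\|\bs{\phi}\langle \bs{X}\rangle_{t,\Delta}\|_2^2],
\end{align}
killing the cross term; a direct computation using the iid $\mathcal{N}(0,1/M)$ entries of $\bs{\phi}$ gives $\E[\|\bs{\phi}\bs{\theta}\|_2^2]=Nv$. Second, I would relate the remaining term $\E[\|\bs{\phi}\langle\bs{X}\rangle_{t,\Delta}\|_2^2]$ to the overlap $q_{t,\Delta}:=N^{-1}\E[\bs{\theta}^\top\langle\bs{X}\rangle_{t,\Delta}]$ by Gaussian integration by parts on each $\phi_{ij}$: the $\bs{\phi}$-derivatives of $\langle \bs{X}\rangle_{t,\Delta}$ spawn posterior covariances, which, after a further use of Nishimori, collapse to overlap moments. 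Crucially, the extra factor coming from $\tilde{\bs{y}}$ is independent of $\bs{\phi}$, so those derivatives generate no new terms beyond the ones already analyzed in \citet{barbier2020mutual}.

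The third and central step is overlap concentration. Standard Ghirlanda--Guerra / adaptive-interpolation arguments --- augmented if necessary with a vanishing perturbation of the prior --- give that $N^{-1}\bs{\theta}^\top\langle\bs{X}\rangle_{t,\Delta}$ concentrates around $v-E_{t,\Delta}$ for Lebesgue-a.e.\ $t$. This is exactly the origin of the ``a.e.\ $t$'' qualifier in the lemma: the concentration comes from differentiating a monotone (hence a.e.-differentiable) function of a scalar parameter, and it fails only on the exceptional null set of non-differentiability. Substituting this concentration into the output of Steps~1--2 and simplifying the resulting rational expression produces $\mathrm{ymmse}_{t,\Delta}=E_{t,\Delta}/(1+E_{t,\Delta}/\Delta)+o_N(1)$.

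The main obstacle will be Step~3: overlap concentration in the presence of the $t$-dependent side channel. In the classical Bayes-optimal setting overlap concentration is well understood but requires a carefully tuned perturbation; here one must check that the same perturbation technique still controls the fluctuations of $q$ uniformly enough as $t$ sweeps along the localization trajectory, and that the resulting differentiability in $t$ is compatible with the I--MMSE relation used in Lemma~\ref{lemma4}. The other steps are essentially direct transcriptions from \citet{barbier2020mutual}, with bookkeeping of the inert $\tilde{\bs{y}}$ term.
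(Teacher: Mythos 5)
Your plan is essentially sound but takes a much longer route than the paper needs. The paper's only real step here is the observation (stated before Lemma~\ref{lemma4}) that the pair of observations $(\bs{y},\tilde{\bs{y}})$ with $\tilde{\bs{y}}=\bs{z}_t/t$ is \emph{exactly} the interpolated model of \citet[(37)]{barbier2020mutual}; hence Lemma~4.6 of that paper applies verbatim, and there is nothing new to prove. Your proposal instead undertakes to re-derive Barbier et al.'s Lemma~4.6 from scratch (Nishimori decoupling of the cross term, Gaussian integration by parts over $\bs{\phi}$, overlap concentration via a perturbation argument), which is a valid reconstruction of \emph{their} proof but is unnecessary here. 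In particular, the ``main obstacle'' you flag --- whether the overlap still concentrates with the $t$-dependent side channel present --- is a non-issue: the side channel is already built into the interpolated model that Barbier et al.\ analyze, so the concentration result (and the ``a.e.\ $t$'' qualifier) is theirs, not something you must re-establish for an enlarged model. Your observation that the extra factor $\exp(-\frac{1}{2t}\|\bs{z}_t - t\bs{\theta}\|^2)$ is $\bs{\phi}$-independent and therefore contributes nothing new when integrating by parts in $\phi_{ij}$ is exactly the right structural reason why the citation goes through unchanged; state that and cite the existing lemma rather than rebuilding it. One small bookkeeping point if you do carry out the computation: ${\rm ymmse}$ is normalized by $1/M$, not $1/N$, so $\frac{1}{M}\E[\|\bs{\phi}\bs{\theta}\|^2]=v/\alpha$; keep the $\alpha$'s straight when matching the final rational expression.
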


The construction of $i_{t,\Delta}$ as a function of $E_{t,\Delta}$ suggests that $E_{t,\Delta}$ satisfies the following PDE:
    \begin{align}
        \alpha\dfrac{\partial
        }{\partial t}(\dfrac{E}{1+E/\Delta})=\dfrac{\partial E}{\partial \Delta^{-1}},
    \end{align}
    which is the property shared by the fixed point of \eqref{fixed point}. Given $E_{0,\Delta}=E_{0,\Delta}^{(\infty)}$, which is shown in \citet{barbier2020mutual}, as the initialization condition for the PDE, we can prove $E_{t,\Delta}=E_{t,\Delta}^{(\infty)}$ combining the above equation.

{\bf Step 3} Note that we have 
\begin{align}
&\dfrac{1}{N}\mathbb{E}[\|\hat{m}^{(k)}(\bs{z}_t,t)-m(\bs{z}_t,t)\|_2^2]\notag\\
=&
\dfrac{1}{N}\mathbb{E}[\|\hat{m}^{(k)}(\bs{z}_t,t)-\bs{\theta}\|_2^2]
-\dfrac{1}{N}\mathbb{E}[\|\bs{\theta}-m(\bs{z}_t,t)\|_2^2].
\label{e20}
\end{align}
Note that $\dfrac{1}{N}\mathbb{E}[\|\bs{\theta}-m(\bs{z}_t,t)\|_2^2]$ converges to $E_{t,\Delta}$ defined in Step 2.
And for most $z$, $\dfrac{1}{N}\mathbb{E}[\|\hat{m}^{(k)}(\bs{z}_t,t)-\bs{\theta}\|_2^2]$ converges to the fixed point equation solution, $E_{t,\Delta}^{(\infty)}$, shown in Step 1, so it converges to the same quantity.
Thus \eqref{e20} converges to 0 as $k\to\infty$. We summarize this result in the following theorem:

\begin{theorem}\label{thm1}
    For $\forall t$, if $\Delta<\Delta_{\rm AMP}$, then AMP is almost surely optimal, namely,
    \begin{align}
        \lim_{k\to \infty}\lim_{N\to \infty}\dfrac{1}{N}\|\hat{m}^{(k)}(\bs{z},t)-m(\bs{z},t)\|_2^2=0
    \end{align}
    where $\hat{m}^{(k)}$ is the output of AMP algorithm at time $t$ after $k$ iterations.
\end{theorem}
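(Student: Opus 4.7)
The plan is to unpack the three-step sketch already sitting above the theorem and make rigorous the chain: AMP output $\to\bs\theta$ in squared norm $\Leftrightarrow$ posterior mean $m\to\bs\theta$ in squared norm, asymptotically. The key driver is the orthogonality decomposition \eqref{e20}: since $m(\bs{z}_t,t)$ is the conditional expectation, the squared $L^2$ gap between $\hat m^{(k)}$ and $m$ equals the gap between the AMP MSE against the truth and the true MMSE. It therefore suffices to show both MSEs converge (in $N$ then $k$) to the same limit.

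\textbf{Step 1: AMP MSE converges to a fixed point.} By Lemma~\ref{lemma_se}, the variance sequence $\tau_{k,t}^2$ produced by AMP satisfies the state-evolution recursion \eqref{se2}, and by Lemma~\ref{lemma_einf} the asymptotic AMP MSE $E_{t,\Delta}^{(k)}$ defined in \eqref{etk} satisfies
\[
E_{t,\Delta}^{(k+1)} = {\rm mmse}^*\!\left(\frac{\alpha}{\Delta+E_{t,\Delta}^{(k)}}+t\right).
\]
Since ${\rm mmse}^*$ is monotone and $E_{t,\Delta}^{(0)}$ is bounded by the bounded support of $P_0$, the iterates are monotone and converge to a limit $E_{t,\Delta}^{(\infty)}$ which is a fixed point of \eqref{fixed point}.

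\textbf{Step 2: the true MMSE $E_{t,\Delta}$ is the same fixed point.} I would lift $E_{t,\Delta}$ to the two-parameter family of channels \eqref{model0}--\eqref{model_1} and study its mutual information $i_{t,\Delta}$ given by \eqref{is}. Lemma~\ref{lemma4} (I-MMSE in $\Delta^{-1}$) combined with Lemma~\ref{lemma5} (${\rm ymmse}_{t,\Delta}\to E_{t,\Delta}/(1+E_{t,\Delta}/\Delta)$) plus a direct computation of $\partial i_{t,\Delta}/\partial t$ yields the first-order PDE
\[
\alpha\,\frac{\partial}{\partial t}\!\left(\frac{E_{t,\Delta}}{1+E_{t,\Delta}/\Delta}\right) = \frac{\partial E_{t,\Delta}}{\partial \Delta^{-1}}.
\]
Implicit differentiation of \eqref{fixed point} shows the branch $E_{t,\Delta}^{(\infty)}$ satisfies the very same PDE, and the initial condition $E_{0,\Delta}=E_{0,\Delta}^{(\infty)}$ at $t=0$ is precisely the $t=0$ Bayes-optimality statement from \citet{barbier2020mutual} available under $\Delta<\Delta_{\rm AMP}$. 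Uniqueness along characteristics then forces $E_{t,\Delta}=E_{t,\Delta}^{(\infty)}$ for all $t\geq 0$.

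\textbf{Step 3: assembly.} Feeding Steps 1 and 2 into \eqref{e20} makes both summands on the right-hand side converge (after sending $N\to\infty$ and then $k\to\infty$) to the common limit $E_{t,\Delta}^{(\infty)}=E_{t,\Delta}$, so the left-hand side vanishes. Promoting the convergence in expectation to the almost-sure statement of the theorem uses the standard Gaussian concentration around state evolution (Lipschitzness of each finite-$k$ AMP iterate as a function of $\bs\phi$). I expect the main technical obstacle to be justifying the PDE argument uniformly in $t$: Lemma~\ref{lemma5} is an a.e.\ statement and \eqref{fixed point} may a priori admit multiple branches as $t$ varies. This is exactly where Proposition~\ref{prop_unique} is used: because $\Delta<\Delta_{\rm AMP}$, for every $t\geq 0$ the fixed point on the relevant branch is the unique solution of \eqref{fixed point}, so $E_{t,\Delta}^{(\infty)}$ varies continuously in $t$ and cannot jump branches. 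This rules out the only mechanism by which the a.e.\ PDE equality could fail to propagate to every $t$, completing the argument.
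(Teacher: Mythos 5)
Your proposal is correct and mirrors the paper's own argument step for step: the orthogonality decomposition \eqref{e20}, the state-evolution fixed point via Lemmas~\ref{lemma_se} and~\ref{lemma_einf}, the I-MMSE/PDE characterization of $E_{t,\Delta}$ with initial condition $E_{0,\Delta}=E_{0,\Delta}^{(\infty)}$ and uniqueness via Proposition~\ref{prop_unique} and Lemma~\ref{lem8}, and the final assembly. Your added remark about using Proposition~\ref{prop_unique} and branch continuity to promote the a.e.-in-$t$ equality from Lemma~\ref{lemma5} to all $t$ is a helpful clarification of a point the paper leaves implicit.
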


{\bf Step 4} Now we show how to bound the KL divergence between the distribution generated by our algorithm and by the first process\eqref{process1}. First, we bound it by using the Girsanov's theorem \citep{oko2023diffusion} as follows:

\begin{align}
    &\dfrac{1}{N}D_{\text{KL}}(P_{\bs{z}_T/T}\|P_{\tilde{\bs{z}}_T/T})
        \lesssim
         \sum_{l=1}^L\E\int_{t_{l-1}}^{t_l}\dfrac{1}{N}\|m(\bs{z}_t, t)-\hat{m}(\bs{z}_{t_l}, t_l)\|_2^2dt. \label{Girsanov_thm1}
\end{align}

It is easy to note that the right hand side of  \eqref{Girsanov_thm1} can be bounded by the summation of three terms:
\begin{align}
    \sum_{l=1}^L\E\int_{t_{l-1}}^{t_l}\dfrac{1}{N}( &\|m(\bs{z}_t, t)-\hat{m}(\bs{z}_t, t)\|_2^2+ \|\hat{m}(\bs{z}_t, t)-\hat{m}(\bs{z}_{t_l}, t)\|_2^2+\|\hat{m}(\bs{z}_{t_l}, t)-\hat{m}(\bs{z}_{t_l}, t_l)\|_2^2 \label{KL_three_terms1}  )dt.
\end{align}

For the first term, it is just the result of Theorem \ref{thm1} given above. Then, we prove that the maps $\bs{z}_t \mapsto\hat{m}(\bs{z}_t, t)$ and $t \mapsto\hat{m}(\bs{z}_t, t)$ are Lipschitz continuous with high probability shown in Lemma \ref{lemma11} and Lemma \ref{lemma10}. The second and third term can then be bounded by using the definition of diffusion process. We have:
\begin{align}
        &\dfrac{1}{N}\E\|\hat{m}(\bs{z}_t, t)-\hat{m}(\bs{z}_{t_l}, t)\|_2^2 \\
        \lesssim
        & \dfrac{1}{N}\E\|\bs{z}_t-\bs{z}_{t_l}\|_2^2 \\
        =
        &\dfrac{1}{N}\E\|\int_{t_{l-1}}^tm(\bs{z}_s, s)ds+W_{t}-W_{t_l}\|_2^2 .
    \end{align}
    Therefore, we can use the bound of support $L_\theta$ and the property of Brownian Motion to bound the second term. For the last term, using the Lipschitz property of $\hat{m}$ with respect to t, we can bound it in the same way as follows:
    \begin{align}
        &\sum_{l=1}^L\int_{t_{l-1}}^{t_l}\dfrac{1}{N}\|\hat{m}(\bs{z}_{t_l}, t)-\hat{m}(\bs{z}_{t_l}, t_l)\|_2^2dt \\
        \lesssim &
         \sum_{l=1}^L\int_{t_{l-1}}^{t_l} (t-t_{l-1})^2dt 
        \leq
        \dfrac{T\delta^2}{3}.
    \end{align}

    Therefore, for fixed $T$, we can choose $\delta$ small enough such that the summation of these two terms is less than $\epsilon/2$.

However, if we consider other metrics such as the Wasserstein distance, the results may not hold under the same assumptions as Theorem~\ref{theorem 7}. Under a stronger assumption of the Lipschitz continuity of $m(\cdot,t)$, we can prove that the convergence of $W_2$ distance still holds.

\begin{theorem}\label{thm:wass_dist}
    Assume that $\bs{\theta}$ is compactly supported on $[-L_\theta,L_\theta]^{N}$, $\Delta<\Delta_{\rm AMP}$ and $m(z, t)$ is $L_m$-Lipschitz w.r.t. $z$ where $L_m$ is bounded uniformly in $N$. Then for any fixed $T$, there exist $K\in \mathbb{N}$ and $\delta\in\mathbb{R}^+$ such that the following holds: 
    \begin{align}
        \lim_{\delta\rightarrow 0}\lim_{K\rightarrow\infty}\lim_{N\rightarrow\infty}\dfrac{1}{\sqrt{N}}W_2(P_{z_T/T},P_{\tilde{z}_T/T}) =0.
    \end{align}
\end{theorem}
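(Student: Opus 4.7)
The plan is to couple the three processes $\bs{z}_t$, $\hat{\bs{z}}_t$, $\tilde{\bs{z}}_t$ so that they are driven by a common Brownian motion $\bs{W}_t$. Under this synchronous coupling,
\begin{align*}
\frac{1}{\sqrt{N}}W_2(P_{\bs{z}_T/T},P_{\tilde{\bs{z}}_T/T})
\le \frac{1}{T\sqrt{N}}\bigl(\E\|\bs{z}_T-\hat{\bs{z}}_T\|_2^2\bigr)^{1/2}
+ \frac{1}{T\sqrt{N}}\bigl(\E\|\hat{\bs{z}}_T-\tilde{\bs{z}}_T\|_2^2\bigr)^{1/2},
\end{align*}
so the task reduces to bounding these two $L_2$-distances in order. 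The key device throughout is Gronwall's inequality applied to the difference of two SDEs driven by the same noise, exploiting that the noise increments cancel.

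For the first (AMP approximation) term, since the Brownian motions cancel,
\begin{align*}
\bs{z}_t - \hat{\bs{z}}_t = \int_0^t\!\bigl[m(\bs{z}_s,s)-m(\hat{\bs{z}}_s,s)\bigr]ds + \int_0^t\!\bigl[m(\hat{\bs{z}}_s,s)-\hat{m}(\hat{\bs{z}}_s,s)\bigr]ds.
\end{align*}
Using the assumed dimension-free Lipschitz constant $L_m$ of $m(\cdot,t)$ on the first bracket, Cauchy--Schwarz, and Gronwall gives
\begin{align*}
\E\|\bs{z}_T-\hat{\bs{z}}_T\|_2^2
\le T\,e^{2L_m T}\int_0^T\E\|m(\hat{\bs{z}}_s,s)-\hat{m}(\hat{\bs{z}}_s,s)\|_2^2\,ds.
\end{align*}
Dividing by $N$ and sending $N\to\infty$ then $K\to\infty$, Theorem~\ref{thm1} forces the integrand to $0$; a uniform-in-$s$ $L^\infty$ bound (from boundedness of $\bs{\theta}\in[-L_\theta,L_\theta]^N$) justifies exchanging limit and integral via dominated convergence.

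For the second (discretization) term, for $t\in[t_l,t_{l+1}]$ the noises cancel and
\begin{align*}
\hat{\bs{z}}_{t_{l+1}}-\tilde{\bs{z}}_{t_{l+1}}
= (\hat{\bs{z}}_{t_l}-\tilde{\bs{z}}_{t_l})
+ \int_{t_l}^{t_{l+1}}\!\bigl[\hat{m}(\hat{\bs{z}}_s,s)-\hat{m}(\tilde{\bs{z}}_{t_l},t_l)\bigr]ds.
\end{align*}
Splitting $\hat{m}(\hat{\bs{z}}_s,s)-\hat{m}(\tilde{\bs{z}}_{t_l},t_l)$ into a spatial and a temporal part and invoking the Lipschitz properties of $\hat{m}$ in $\bs{z}$ and in $t$ (Lemmas~\ref{lemma11} and \ref{lemma10}), I get a discrete Gronwall recursion. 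The extra $\|\hat{\bs{z}}_s-\hat{\bs{z}}_{t_l}\|_2$ contribution is controlled using the SDE expression for $\hat{\bs{z}}$: the drift contribution is $O(\delta)$ thanks to a.s.\ boundedness of $\hat{m}$, and the Brownian increment contributes $O(\sqrt{N\delta})$ in $L_2$. Iterating yields
\begin{align*}
\frac{1}{N}\E\|\hat{\bs{z}}_T-\tilde{\bs{z}}_T\|_2^2 \lesssim C(T,L_m)\,\delta,
\end{align*}
which vanishes as $\delta\to 0$ after $N,K\to\infty$.

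The main obstacle I expect is handling the Lipschitz bounds on $\hat{m}$ uniformly: Lemmas~\ref{lemma11}--\ref{lemma10} give Lipschitz constants only with high probability, and these constants may in principle depend on $N$ and on the random realization of $\bs{\phi}$. To make the Gronwall argument produce an $N$-independent bound, I will condition on the high-probability event that both Lipschitz constants are controlled, absorb the complementary event using the deterministic bound $\|\hat{\bs{z}}_t\|_\infty, \|\tilde{\bs{z}}_t\|_\infty \lesssim L_\theta t + \|\bs{W}_t\|_\infty$, and use the $L^\infty$ boundedness of $\bs{\theta}$ to dominate. Combining the two pieces and taking the limits $N\to\infty$, $K\to\infty$, $\delta\to 0$ in the stated order yields the theorem.
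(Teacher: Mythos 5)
Your proof takes essentially the same route as the paper's: triangle inequality under the synchronous coupling $\frac{1}{\sqrt{N}}W_2(P_{z_T/T},P_{\tilde z_T/T}) \le \frac{1}{\sqrt N}W_2(P_{z_T/T},P_{\hat z_T/T}) + \frac{1}{\sqrt N}W_2(P_{\hat z_T/T},P_{\tilde z_T/T})$, continuous Gronwall with the assumed $L_m$-Lipschitz drift plus Theorem~\ref{thm1} for the first piece, and a discrete Gronwall iteration using the Lipschitz bounds on $\hat m$ (Lemmas~\ref{lemma11} and \ref{lemma10}) for the discretization piece. One small slip: the constant in your discretization bound should involve the Lipschitz constants of $\hat m$ (the paper's $C_3$, $C_4$, which depend on $K$, $T$, $\alpha$, $\Delta$, $L_\theta$), not $L_m$; with that correction the argument matches the paper's Appendix B proof.
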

\begin{remark}
    Here, we established the proof assuming that $m(z,t)$ has a dimension-free Lipschitz constant. The validity of this assumption 
    is supported by recent breakthrough results of \cite{brennecke2023operator} and \cite{el2024bounds}
    about the operator norm of covariance matrices for replica symmetric systems; see Appendix~\ref{sec_w}. 
\end{remark}

\begin{remark}
\label{rem5}
In Theorem~\ref{thm:wass_dist}, 
we choose the time precision $\delta$ to be a constant independent of $N$.
In contrast,
\citet{davide2024sampling} mentioned that the Lipschitz constant $L_m$ could be $\Theta(N)$ when phase transition occurs, 
and suggested choosing $\delta=O(1/N)$.
However, a closer inspection reveals that the accumulated error scales as $L_m\delta k$ where $k$ is the number of steps (see for example
\citet[Lemma 4]{davide2024sampling}),
so choosing small $\delta$ won't compensate for diverging $L_m$, 
as $k$ also increases.
After consulting the authors of \citet{davide2024sampling} about this issue,
they suggested that small $\delta$ may only be needed during a phase transition, but since a proof that the phase transition only occurs in an $O(1/N)$ time interval is lacking, 
it remains an interesting challenge to provide a rigorous proof.
By contrast, our Proposition~\ref{prop_unique} shows that phase transition does not occur while changing $t$ as along as $\Delta<\Delta_{\rm AMP}$, thus avoiding choosing an $N$-dependent $\delta$.
\end{remark}

\section{NUMERICAL EXPERIMENTS}\label{sec:numer_exper}

In our experiments, we first consider the following discrete prior distribution:
\begin{align}
    P_0(\theta)=\frac{1}{2}\delta_{-1}+\frac{1}{2}\delta_{1}.
    \label{dist}
\end{align}

\begin{figure}[t]
    \includegraphics[width=\textwidth]{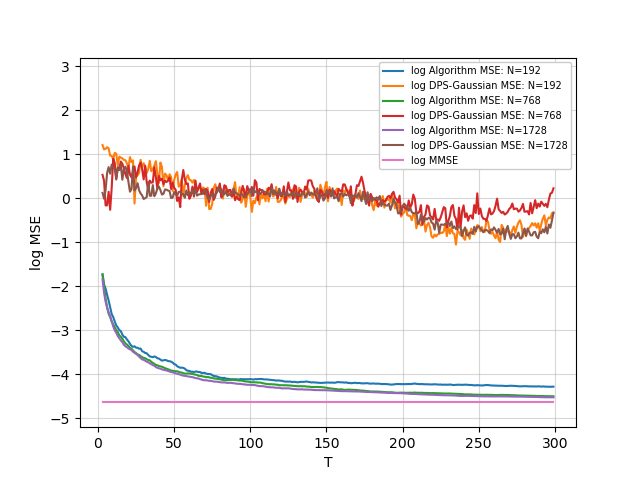}
    \caption{log Algorithm MSE and log DPS-Gaussian MSE for different values of $N$. For this experiment, we let the prior be standard normal distribution and set $\Delta=0.01, \alpha=2, K=50, T=300, \delta=0.1$.}
    \label{fig:alg_mse_dps_mse}
\end{figure}

For the first experiment, we set $M=1000, N=1250, K=20, L=2000, \delta=0.1, v=1$. In Figure \ref{fig:trajectory} in Appendix \ref{appen:results_of_numeric}, we plot the trajectories of the first and the second coordinates of the samples generated by Algorithm \ref{Alg1}: $\tilde{\bs{z}}_{t_{l}}/t_{l}, l\in\{1, \cdots, L\}$ with different $\Delta$. We choose $\Delta=0.01, 1, 10$. For each $\Delta$, we run four independent simulations. In each graph, the circle marker represents the initial value of the sample, and the star marker represents the final value of the sample. Notice that $(\tilde{\bs{z}}_{t_{l}}/t_{l})_1$ and $(\tilde{\bs{z}}_{t_{l}}/t_{l})_2$ converge to $+1$ or $-1$ as $l\rightarrow\infty$.

Then we consider the following continuous prior distribution which is a standard normal distribution:
\begin{align}
    P_0(\theta) = \frac{1}{\sqrt{2\pi}}\exp(-\frac{\theta^2}{2}).
\end{align}

For the second experiment, we fix $\alpha=2$ and set $K=50, T=300, \delta=0.1, v=1$. For $\Delta=0.01$, we plot the log MSE of the samples generated by our algorithm with different values of $N$ and $M$. We choose $N=192, 768, 1728$, which implies that $M=384, 1536, 3456$ since $\alpha=2$. We perform comparison with DPS-Gaussian method \citep{chung2022diffusion}.

In Figure \ref{fig:alg_mse_dps_mse}, we observe a gap between the MSE of the samples generated by DPS-Gaussian and MMSE since \citet{chung2022diffusion} used $p(\bs{y}|\mathbb{E}_{\bs{\theta}_0\sim p(\bs{\theta}_0|\bs{\theta}_t)}[\bs{\theta}_0])$ to approximate the likelihood $p_t(\bs{y}|\bs{\theta}_t)$. In contrast, for our Algorithm, we find that within each fixed $N$, the algorithm MSE will gradually approach the MMSE as $T$ increases.

\section{DISCUSSION}\label{sec_discussion}
Closely related to our work are recent deployments of the stochastic localization method in the Sherrington-Kirkpatrick (SK) Model and the spiked PCA \citep{montanari2023posterior}.
These works established the convergence under the Wasserstein distance,
and typically require establishing a dimension-free Lipschitz constant for the AMP estimator.
These conditions can be challenging to establish, 
and place restrictions on the range of noise variance that convergence of the algorithm can be established.
The challenges include:
1) Ensure that $m(\cdot, t)$ is Lipschitz continuous;
see the choice of $\beta_0$ in Section~G of \cite{montanari2023posterior}.
2) The noise variance also needs to be small enough such that a fixed point equation involving the localization time $t$ has a unique solution for all $t>0$ \cite[Proposition~D.1]{montanari2023posterior}.
This condition may appear stronger than assuming the uniqueness of the fixed point equation in the mean estimation problem, which is just the special case of $t=0$. It is conjectured that the threshold for efficient sampling is the infimum variance at which AMP does not achieve asymptotically Bayes optimal estimation \cite[Remark~3.2]{montanari2023posterior}.
This conjecture is natural since the additional stochastic localization process effectively reduces the noise variance.
In this paper we partially resolve this conjecture (for the setting of linear models), 
by adopting slightly different proof strategies and error metrics that bypass or resolve the above mentioned challenges in establishing Lipschitz continuity and uniqueness of the fixed point.
For challenge 1), we leverage a monotone convergence property of the mmse in state evolution, which bypasses the more sophisticated AMP contraction property,
and we use the Girsanov theorem to prove convergence in the smoothed KL divergence, as opposed to the Wasserstein distance.
For 2), we make an observation in Proposition~\ref{prop_unique} that the uniqueness of the fixed point equation involving the stochastic localization process can be guaranteed under the uniqueness of the fixed point equation without the stochastic localization process.

For comparison, in Appendix~\ref{sec_w} we provide a proof of convergence under the Wasserstein distance, following a similar style of arguments in \cite{montanari2023posterior}, under an assumption of the Lipschitz continuity of $m(\cdot,t)$. 
In \cite{montanari2023posterior}, a similar condition for the Lipschitz continuity of $\hat{m}^{(k)}(\cdot,t)$ for large $k$ was adopted, which relies on sophisticated results on contractive properties of the AMP iteration from prior works,
which is one of the reasons why \cite{montanari2023posterior} requires sufficiently small noise variance.
In Appendix~\ref{sec_overlap} we note a connection between Lipschitz continuity of $m(\cdot,t)$ and the well-studied problem of concentration of the \emph{overlap}, which suggests that such Lipschitz conditions are likely to be true in the ``Bayes optimal'' setting,
but may not hold in more general settings.
We remark that the Overlap Gap Property \citep{gamarnik2021overlap} has recently been proposed as a theory for computation hardness.

\bibliographystyle{plainnat}
\bibliography{yourbibfile.bib}

\newpage
\appendix

\section*{\centering\LARGE\textbf{Supplementary Materials}}
 
\section{PROOFS}
\subsection{Proof of Theorem \ref{thm1}}\label{appen:proof_of_thm2}

To prove this theorem, we need to calculate the state evolution recursion to show how AMP converges. The state evolution here has a simple intuitive explanation. Suppose we replace $\phi$ by i.i.d. sampled random matrix $\phi^k$ for each iteration. $\bs{y}^k$ is replaced by $\phi^k\bs{m_0}+\sqrt{\Delta/\alpha}\bs{w}$. Then, the iterations become:

\begin{align}
        \bs{r}^{(k)}&=\bs{y}^{k}-\phi^k\hat{m}^{(k)}\, \\
        \hat{m}^{(k+1)}&=\eta_k(\phi^{k\top} r^{(k)}+\hat{m}^{(k)}). 
\end{align}
By eliminating $r^{(k)}$, the recursion can be given by:

\begin{align}
    \hat{m}^{(k+1)}&=\eta_k(\phi^{k\top} \bs{y}^{k}+(I-\phi^{k\top}\phi^{k})\hat{m}^{(k)})\\
    &=\eta_k(m_0+\sqrt{\Delta/\alpha}\phi^{k\top}\bs{w}+(I-\phi^{k\top}\phi^{k})(\hat{m}^{(k)}-m_0)).\label{se20}
\end{align}
Therefore, if we denote $\hat{\tau}_k=\lim_{N\to\infty}\frac{1}{N}\|\hat{\bs{m}}^{(k)}-\bs{m}_0\|_2^2$, we can use $\hat{\tau}_k$ to describe the convergence of each term in \eqref{se20}. We have $(I-\phi^{k\top}\phi^{k})(\hat{\bs{m}}^{(k)}-\bs{m}_0))$ converges to a vector with i.i.d. entry with mean 0 and variance $\hat{\tau}_k/\alpha$. Besides, by law of large numbers $\phi^{k\top}\bs{w}$ converges to $1/\alpha$. Thus, we have
\begin{align}
    \tau_k &= \dfrac{\hat{\tau}_k}{\alpha}+\frac{\Delta}{\alpha}\\
    \lim_{N\to\infty}\hat{m}^{(k+1)} &= \eta_k(m_0+\tau_k\bs{Z})\\
    \hat{\tau}_{k+1}&=\E(\eta_k(m_0+\tau_k\bs{Z})-m_0)^2
\end{align}
which is just the state evolution given in \eqref{se1} and \eqref{se2}.
However, the correlations between $\phi$ and $\hat{\bs{m}}^{(k)}$ can not be neglected. The Onsager terms are then used to asymptotically eliminate these correlations shown as $b_t^{(k)}$. Remark 7.2 in \citet{berthier2020state} shows that it can be given by:
\begin{align}
    b_t^{(k)} = \dfrac{1}{N\alpha}\sum_{i=1}^N[\eta'(\bs{\phi}^T \bs{r}^{(k-1)}+\hat{\bs{m}}^{(k-1)};\tau_{k-1, t}^2,\bs{z}_{t}, t)]_i.\label{b_t_form}
\end{align}
Now, we will show that the state evolution here coincides with $\tau_{k,t}$ defined in \eqref{rec1}.
\begin{lemma}\label{lemma_se}
    $\tau_{k,t}$ satisfies the state evolution:
    \begin{align}
    \tau_{0,t}^2&=\frac{\Delta}{\alpha}+\frac{v}{\alpha}, \text{ where } v=\mathbb{E}_\pi(\|\bs{\theta}\|_2^2/N),\label{lemma_se1}\\   
    \tau_{k+1,t}^2&=\frac{\Delta}{\alpha}+\lim_{N\rightarrow\infty}\dfrac{1}{\alpha N}\E_{\bs{\theta}, \tilde{\bs{Z}}}[\|\eta(\bs{\theta}+\tau_{k, t}\tilde{\bs{Z}};\tau_{k, t}^2,z_t, t)-\bs{\theta}\|_2^2].\label{lemma_se2}
\end{align}
\end{lemma}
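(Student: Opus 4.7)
The plan is to reduce the claimed state evolution to a direct consequence of the coordinate-wise structure of both the product prior and the denoiser $\eta$. The base case $k=0$ is immediate: the initializer $\tau_{0,t}^2=(\Delta+v)/\alpha$ in \eqref{rec1} is literally $\Delta/\alpha+v/\alpha$, which matches \eqref{lemma_se1}. All the content is in the inductive step, where I need to identify the right-hand side of \eqref{lemma_se2} with $(\Delta+{\rm mmse}(\tau_{k,t}^{-2},t))/\alpha$.

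First I would read off from its explicit formula that $\eta(\bs{u};\Sigma^2,\bs{z}_t,t)$ acts entry-by-entry: its $l$-th component depends only on $(u_l,z_{t,l})$ and equals the scalar posterior mean
\[
\eta_l(\bs{u};\Sigma^2,\bs{z}_t,t)=\E\!\left[\theta\,\big|\,\theta+\Sigma\tilde{G}=u_l,\ t\theta+\sqrt{t}\tilde{W}=z_{t,l}\right],
\]
where $\theta\sim P_0$ and $\tilde{G},\tilde{W}\sim\mathcal{N}(0,1)$ are independent. To align this with the Brownian observation $\bs{z}_t=t\bs{\theta}+\bs{G}_t$ used to instantiate the state evolution in \eqref{lemma_se2}, I use the defining property that the $l$-th coordinate $(\bs{G}_t)_l$ has the same law as $\sqrt{t}\tilde{W}_l$, and that both are independent across $l$. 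Consequently, by the very definition of ${\rm mmse}(\Sigma^{-2},t)$ stated just after \eqref{rec1}, the per-coordinate squared error satisfies
\[
\E\!\left[(\eta_l-\theta_l)^2\right]={\rm mmse}(\Sigma^{-2},t).
\]

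Next I would apply this coordinate-wise identity with $\bs{u}=\bs{\theta}+\tau_{k,t}\tilde{\bs{Z}}$ and $\Sigma=\tau_{k,t}$. Since the prior $P_0^{\bigotimes N}$ is a product, $\tilde{\bs{Z}}\sim\mathcal{N}(0,I_N)$, and $\bs{z}_t$ is a coordinate-wise independent Gaussian channel output with parameter $t$, the $N$ pairs $(\theta_l,\tilde{Z}_l,z_{t,l})$ are i.i.d. Averaging the equal-law squared errors over $l$ gives
\[
\frac{1}{N}\E\!\left[\|\eta(\bs{\theta}+\tau_{k,t}\tilde{\bs{Z}};\tau_{k,t}^2,\bs{z}_t,t)-\bs{\theta}\|_2^2\right]={\rm mmse}(\tau_{k,t}^{-2},t)
\]
for every $N$, so the $N\to\infty$ limit in \eqref{lemma_se2} is vacuous. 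Substituting this identity into \eqref{lemma_se2} and dividing by $\alpha$ produces $\tau_{k+1,t}^2=(\Delta+{\rm mmse}(\tau_{k,t}^{-2},t))/\alpha$, i.e. the recursion \eqref{rec1}, closing the induction.

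I do not anticipate a genuine obstacle here; the main point to verify carefully is the distributional equivalence between $\bs{G}_t$ (Brownian motion at time $t$) and $\sqrt{t}\tilde{\bs{W}}$ in the two definitions, and the observation that since everything factorizes over coordinates the empirical average equals the single-coordinate expectation without needing an asymptotic argument. The $\lim_{N\to\infty}$ in the statement is kept only to match the format of standard state evolution results in \citet{berthier2020state}.
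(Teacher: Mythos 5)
Your base case and coordinate-wise factorization are correct, but there is a genuine interpretive gap in the inductive step. In the lemma statement (and, crucially, in the way the paper uses it) the subscript on $\E_{\bs{\theta},\tilde{\bs{Z}}}$ is deliberate: the expectation is taken over $\bs{\theta}$ and $\tilde{\bs{Z}}$ only, with $\bs{z}_t$ held fixed (random, but not averaged). The quantity $\frac{1}{N}\sum_{i=1}^N\E_{\theta_i,\tilde{Z}_i}[|\eta(\theta_i+\tau_{k,t}\tilde{Z}_i;\tau_{k,t}^2,z_{t,i},t)-\theta_i|^2]$ is therefore a \emph{random} variable (a function of the realization of $\bs{z}_t$), and the $\lim_{N\to\infty}$ is not decorative: the content of the lemma is that this random average converges almost surely, over the randomness of $\bs{z}_t$, to $\E_{z_{t,i}}\E_{\theta_i,\tilde{Z}_i}[\cdots]={\rm mmse}(\tau_{k,t}^{-2},t)$. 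This is precisely what the downstream Lemma~\ref{lemma_einf} needs, since the AMP MSE concentrates (in probability, conditionally on $\bs{z}_t$) around this conditional SE prediction.

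Your proof instead folds $\bs{z}_t$ into the expectation from the start, at which point linearity of expectation gives the exact identity $\frac{1}{N}\E_{\bs{\theta},\tilde{\bs{Z}},\bs{z}_t}[\cdots]={\rm mmse}(\tau_{k,t}^{-2},t)$ for every $N$, and you conclude that the $N\to\infty$ limit is vacuous. That proves a weaker (averaged over $\bs{z}_t$) statement than the one the paper asserts and relies on. The fix is small and you already have the key ingredient: you correctly observe that the triples $(\theta_l,\tilde{Z}_l,z_{t,l})$ are i.i.d.\ across $l$. Rather than averaging out $\bs{z}_t$, keep it fixed, note that the summands $g(z_{t,i}):=\E_{\theta_i,\tilde{Z}_i}[|\eta(\theta_i+\tau_{k,t}\tilde{Z}_i;\tau_{k,t}^2,z_{t,i},t)-\theta_i|^2]$ are i.i.d.\ bounded functions of the i.i.d.\ coordinates $z_{t,i}$, and apply the Strong Law of Large Numbers to get $\frac{1}{N}\sum_i g(z_{t,i})\to\E_{z_{t,1}}[g(z_{t,1})]={\rm mmse}(\tau_{k,t}^{-2},t)$ almost surely. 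That is exactly the step the paper's proof takes (``$\overset{a.s.}{=}$ by the Strong Law of Large Numbers''), and it is the step your argument is missing.
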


\begin{proof}
We apply induction method to prove the result. For $k=0$, we can find that \eqref{lemma_se1} is just the definition of $\tau_{k,0}$ in \eqref{rec1}.

Supposing $\eqref{lemma_se2}$ holds for $k$, let's consider the case the for $k+1$:
    \begin{align}
    &\frac{\Delta}{\alpha}+\lim_{N\rightarrow\infty}\dfrac{1}{\alpha N}\sum_{i=1}^N\E_{\theta_i, \tilde{Z}_i}[|\eta(\theta_i+\tau_{k, t}\tilde{Z}_i;\tau_{k, t}^2,z_{t, i}, t)-\theta_i|^2] \\
    \overset{a.s.}{=}&\frac{\Delta}{\alpha}+\dfrac{1}{\alpha}\E_{z_{t,i}}\E_{\theta_i, \tilde{Z}_i}[|\eta(\theta_i+\tau_{k, t}\tilde{Z}_i;\tau_{k, t}^2,z_{t, i}, t)-\theta_i|^2] \\
    =&\frac{\Delta}{\alpha}+\dfrac{1}{\alpha}\E_{\theta_i,\Tilde{Z}_i,\Tilde{W}}[|\theta_i-E[\theta_i|\theta_i+\tau_{k,t}\Tilde{Z}_i,t\theta_i+\sqrt{t}\Tilde{W}]|^2]\\
    =&\frac{\Delta}{\alpha}+\frac{{\rm mmse}(\tau_{k,t}^{-2},t)}{\alpha}\\
    =&\tau_{k+1,t}^2,
\end{align}

where the first equality holds by the Strong Law of Large Numbers. Thus, the state evolution holds for all $k$. 
\end{proof}

With the state evolution, we can now describe the asymptotic MSE of AMP by $\tau_{k,t}$.

\begin{lemma}\label{lemma_einf}
    Recall $E_{t,\Delta}^{(k)}$ in \eqref{etk}, we have:
    \begin{align}
    E_{t,\Delta}^{(k+1)}={\rm mmse}^*\left(\dfrac{\alpha}{\Delta+E_{t,\Delta}^{(k)}}+t\right).
\end{align}
\end{lemma}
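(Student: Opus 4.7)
\textbf{Proof plan for Lemma \ref{lemma_einf}.} The plan is to combine the state evolution recursion already established in Lemma \ref{lemma_se} with the fact that the denoiser $\eta$ is, by construction, the Bayes-optimal (MMSE) estimator for the scalar channel $u = x + \Sigma v$, $z_t = tx + g_t$. The argument is essentially an identification of two forms of the state-evolution update: one in terms of the variance parameter $\tau_{k,t}^2$, and one in terms of the per-coordinate MSE $E_{t,\Delta}^{(k)}$.

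First, I would invoke the state evolution from \citet{berthier2020state}, which for the general-denoiser AMP \eqref{AMP1}--\eqref{AMP2} with the Onsager correction \eqref{corr_term} guarantees that, almost surely as $N\to\infty$,
\begin{align}
E_{t,\Delta}^{(k+1)} = \lim_{N\to\infty}\frac{1}{N}\|\hat{m}^{(k+1)}(\bs{z}_t,t)-\bs{\theta}\|_2^2 = \lim_{N\to\infty}\frac{1}{N}\E\bigl[\|\eta(\bs{\theta}+\tau_{k,t}\tilde{\bs{Z}};\tau_{k,t}^2,\bs{z}_t,t)-\bs{\theta}\|_2^2\bigr].
\end{align}
Because $\eta$ is exactly the MMSE denoiser associated with the pair of channels $(u=x+\tau_{k,t}v,\ z_t=tx+g_t)$, the right-hand side is the per-coordinate MMSE for estimating $\theta$ from $\theta+\tau_{k,t}\tilde Z$ and $t\theta+\sqrt{t}\tilde W$, which by definition equals ${\rm mmse}(\tau_{k,t}^{-2},t)$. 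Using the reduction ${\rm mmse}(\Sigma^{-2},t)={\rm mmse}^*(\Sigma^{-2}+t)$ recalled in the paper, this gives
\begin{align}
E_{t,\Delta}^{(k+1)} = {\rm mmse}^*\!\bigl(\tau_{k,t}^{-2}+t\bigr).
\end{align}

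The remaining step is to identify $\tau_{k,t}^{-2}$ with $\alpha/(\Delta+E_{t,\Delta}^{(k)})$, which I would prove by induction on $k$. The base case $k=0$ is immediate from $\hat{m}^{(0)}=\bs{0}$: $E_{t,\Delta}^{(0)} = \lim_N N^{-1}\|\bs{\theta}\|_2^2 = v$ by the law of large numbers, while $\tau_{0,t}^2=(\Delta+v)/\alpha$ by \eqref{rec1}, so $\tau_{0,t}^2 = (\Delta+E_{t,\Delta}^{(0)})/\alpha$. For the inductive step, assuming the identification holds at $k$, the recursion $\tau_{k+1,t}^2 = (\Delta+{\rm mmse}(\tau_{k,t}^{-2},t))/\alpha$ together with the display above shows $\tau_{k+1,t}^2 = (\Delta+E_{t,\Delta}^{(k+1)})/\alpha$, closing the induction. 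Substituting into the boxed display yields the claim.

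I do not anticipate a serious obstacle here, since all the heavy lifting is already done by Lemma \ref{lemma_se} and by the general state evolution theorem of \citet{berthier2020state}; the only mildly delicate point is the exchange of the $N\to\infty$ limit with the expectation on the right-hand side of the state-evolution display, which is justified because the prior is compactly supported on $[-L_\theta,L_\theta]^N$, the denoiser $\eta$ is uniformly bounded on this support, and the coordinates of $(\bs{\theta},\tilde{\bs{Z}},\bs{z}_t)$ are i.i.d., so the per-coordinate terms are uniformly integrable and the strong law applies, exactly as in the proof of Lemma \ref{lemma_se}.
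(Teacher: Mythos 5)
Your proposal is correct and takes essentially the same route as the paper: invoke the state-evolution theorem of \citet{berthier2020state} (the paper cites Remark 7.1 there together with Lemma~\ref{lemma_se}) to identify $E_{t,\Delta}^{(k+1)}$ with the state-evolution quantity, rewrite the latter as $\mathrm{mmse}^*(\tau_{k,t}^{-2}+t)$ using the recursion for $\tau_{k,t}$, and finally substitute $\tau_{k,t}^{-2}=\alpha/(\Delta+E_{t,\Delta}^{(k)})$. The only difference is cosmetic: you carry out an explicit induction to establish the identification $\tau_{k,t}^2=(\Delta+E_{t,\Delta}^{(k)})/\alpha$, whereas the paper leaves that step implicit (it follows immediately by rearranging the third line of their chain, applied at index $k-1$, together with the base case $\hat{m}^{(0)}=\bs{0}$); making it explicit is a mild improvement in clarity, not a different argument.
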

\begin{proof}
    By Lemma \ref{lemma_se} and remark 7.1 in \citet{berthier2020state}, we have
\begin{align}
    E_{t,\Delta}^{(k+1)}&=\lim_{N\rightarrow\infty}\dfrac{1}{N}\|\hat{m}^{(k+1)}-\bs{\theta}\|_2^2\\
    &\overset{p}{\rightarrow}\lim_{N\rightarrow\infty}\dfrac{1}{N}\E_{\bs{\theta}, \tilde{\bs{Z}}}[\|\eta(\bs{\theta}+\tau_{k, t}\tilde{\bs{Z}};\tau_{k, t}^2,\bs{z}_t, t)-\bs{\theta}\|_2^2]\\
    &=\alpha(\tau_{k+1, t}^2-\frac{\Delta}{\alpha})\\
    &={\rm mmse}^*(\tau_{k, t}^{-2}+t)\\
    &={\rm mmse}^*\left(\dfrac{\alpha}{\Delta+E_{t,\Delta}^{(k)}}+t\right).
\end{align}
\end{proof}

With Lemma \ref{lemma_einf} and the fact that ${\rm mmse}^*$ is a monotonically decreasing function, we have $\lim_{k\rightarrow\infty}E_{t,\Delta}^{(k)}=E_{t,\Delta}^{(\infty)}$ exists and is the fixed point of the equation given as follows:
\begin{align}
    E={\rm mmse}^*(\alpha/(\Delta+E)+t).
\end{align}

This $E_{t,\Delta}^{(\infty)}$ is actually the true MMSE under some assumptions. To show this, we would apply Lemma \ref{lemma4} and Lemma \ref{lemma5}. With the property of relationship between MI and MMSE, we can derive the following theorem with Proposition \ref{prop_unique}, which indicates the convergence of AMP algorithm.

\begin{theorem}
For $\Delta<\Delta_{AMP}$ and a.e. $t$, we have
\begin{align}
    E_{t,\Delta} = E_{t,\Delta}^{(\infty)},
\end{align}
where $E_{t,\Delta}$ denotes the limiting normalized mmse for the observation model in \eqref{model0}-\eqref{model_1}, 
and $E_{t,\Delta}^{(\infty)}$ denotes the limiting normalized mse of the AMP algorithm.
Therefore, the $\hat{m}$ generated by the AMP algorithm will converge to $m$ when the number of iterations grow to infinity, i.e.:
\begin{align}
    \lim_{k\to \infty}\lim_{N\to \infty}\frac1{N}\|\hat{m}^{(k)}(z,t)-m(z,t)\|_2^2=0.
\end{align}
\end{theorem}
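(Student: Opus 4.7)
The plan is to follow the PDE argument sketched in Step 2 of the proof outline, and then combine it with the almost sure decomposition in Step 3 to conclude. First I would set $t=0$ and invoke the mutual information / mmse machinery of \cite{barbier2020mutual} (Theorems 3.1--3.2), whose hypotheses are satisfied because $\Delta<\Delta_{\rm AMP}$ and the prior is compactly supported, to obtain the base identity $E_{0,\Delta}=E_{0,\Delta}^{(\infty)}$. This supplies the initial condition of a PDE in the variables $(t,\Delta^{-1})$.

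Next I would show that both sides, viewed as functions of $(t,\Delta)$, satisfy the same PDE
\begin{align}
\alpha\,\partial_t\!\left(\frac{E}{1+E/\Delta}\right)=\partial_{\Delta^{-1}}E.
\end{align}
For $E_{t,\Delta}$ this follows by combining Lemma~\ref{lemma4} (the I-MMSE relation $\partial_{\Delta^{-1}} i_{t,\Delta}=\tfrac{\alpha}{2}\,{\rm ymmse}_{t,\Delta}$) with Lemma~\ref{lemma5} (relating ${\rm ymmse}$ to $E_{t,\Delta}$), together with the analogous derivative $\partial_t i_{t,\Delta}$ computed directly from the partition function in \eqref{is}, which yields the left-hand side up to $o_N(1)$. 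For $E_{t,\Delta}^{(\infty)}$ I would differentiate the fixed-point relation $E={\rm mmse^*}\!\left(\tfrac{\alpha}{\Delta+E}+t\right)$ implicitly in $t$ and in $\Delta^{-1}$; a short chain-rule computation shows that the same PDE is satisfied, by design of the state-evolution fixed point.

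With both functions solving the same first-order PDE with matching initial data at $t=0$, the remaining task is a uniqueness argument. This is where Proposition~\ref{prop_unique} enters: since $\Delta<\Delta_{\rm AMP}$, the equation $E={\rm mmse^*}\!\left(\tfrac{\alpha}{\Delta+E}+t\right)$ has a unique solution for every $t\ge 0$, so once $E_{t,\Delta}$ and $E_{t,\Delta}^{(\infty)}$ agree at $t=0$ and both obey the PDE (which implicitly traces out the fixed-point curve), they must coincide for a.e. $t$. I expect this to be the main obstacle: verifying that the PDE derivation extends rigorously beyond $t=0$ requires controlling the $o_N(1)$ terms in Lemma~\ref{lemma5} uniformly and justifying the interchange of limit and differentiation along the interpolation path, and it is precisely here that the absence of a phase transition (guaranteed by Proposition~\ref{prop_unique}) is essential.

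Finally, the convergence $\lim_{k\to\infty}\lim_{N\to\infty}\tfrac1N\|\hat m^{(k)}(z,t)-m(z,t)\|_2^2=0$ follows from the Pythagorean decomposition \eqref{e20}: the first term on the right-hand side is the AMP mean squared error, which tends to $E_{t,\Delta}^{(\infty)}$ by Lemma~\ref{lemma_einf} as $k,N\to\infty$, and the second term is the Bayes risk, which tends to $E_{t,\Delta}$. Having just established $E_{t,\Delta}=E_{t,\Delta}^{(\infty)}$, the difference vanishes, completing the proof.
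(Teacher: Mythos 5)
Your proposal reproduces the paper's architecture quite faithfully: the $t=0$ base case from Barbier et al., the I-MMSE relation (Lemma~\ref{lemma4}) combined with Lemma~\ref{lemma5} and the direct computation of $\partial_t i_{t,\Delta}$ to derive the PDE for $E_{t,\Delta}$, the implicit differentiation of the state-evolution fixed-point equation showing $E_{t,\Delta}^{(\infty)}$ satisfies the same PDE, and finally the Pythagorean decomposition \eqref{e20} to transfer $E_{t,\Delta}=E_{t,\Delta}^{(\infty)}$ into the $L^2$ convergence of $\hat m^{(k)}$ to $m$. All of this matches the paper's Steps~2--3 and the proof in Appendix~\ref{appen:proof_of_thm2}.

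The gap is in your uniqueness step. You attribute the conclusion ``matching initial data $+$ same PDE $\Rightarrow$ coincidence'' to Proposition~\ref{prop_unique}, but that proposition only establishes uniqueness of the algebraic fixed-point equation $E={\rm mmse^*}(\alpha/(\Delta+E)+t)$ at each fixed $(t,\Delta)$; it does not by itself say that the first-order PDE $\alpha\,\partial_t(E/(1+E/\Delta))=\partial_{\Delta^{-1}}E$ with prescribed data at $t=0$ has a unique solution. That is a genuinely separate fact, and the paper proves it as Lemma~\ref{lem8} by a method-of-characteristics argument: one shows that along any solution the quantity $\alpha/(\Delta(t)+E(\Delta(t),t))+t$ is conserved along the characteristic curves $\Delta(t)$ defined by \eqref{Delta_t}, so the characteristics (and hence the solution) are pinned down by the $t=0$ data. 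Your parenthetical ``(which implicitly traces out the fixed-point curve)'' gestures at exactly this conservation law, but it is the content of a lemma, not a consequence of Proposition~\ref{prop_unique}. To close the gap you would need to state and prove this PDE uniqueness claim explicitly; Proposition~\ref{prop_unique} is still needed, but its actual role is to guarantee that $E_{t,\Delta}^{(\infty)}$ is a well-defined single-valued function of $(t,\Delta)$ (no jump between fixed-point branches as $t$ varies), so that the implicit-function-theorem differentiation of the fixed-point relation is legitimate.
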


\begin{proof}
    First, let's consider a special case when $t=0$. That is we only have the first observation \eqref{model0}. This is the case proved in \citet{barbier2020mutual}, and we have
    \begin{align}
        \dfrac{d}{d\Delta^{-1}}\lim_{N\to \infty}i_{0, \Delta} = \dfrac{\alpha}{2}\dfrac{E_{0, \Delta}}{1+E_{0,\Delta}/\Delta} = \dfrac{\alpha}{2}\dfrac{E_{0,\Delta}^{(\infty)}}{1+E_{0,\Delta}^{(\infty)}/\Delta}.
    \end{align}
    For general $t>0$, we can write $i_{t, \Delta}$ as:
    \begin{align}
        i_{t,\Delta} = i_{0,\Delta} +\int_{0}^{t}\dfrac{\partial i_{s,\Delta}}{\partial s}ds.\label{48}
    \end{align}
    Denote the expectation w.r.t. the Gibbs measure as:
\begin{align}
    \langle g(\bs{X})\rangle_{t,\Delta}:=\int d\bs{x}P_t(\bs{\theta}=\bs{x}\mid \bs{y}, \tilde{\bs{y}})g(\bs{x}).
\end{align}
where $P_t$ is the posterior of $\bs{\theta}$ given $\bs{y}$ and $\tilde{\bs{y}}$. By computing the derivative of $i_{t,\Delta}$ defined as \eqref{is}, we can find that:
    \begin{align}
        \dfrac{\partial i_{t,\Delta}}{\partial t}=\dfrac{1}{2N}\sum_{i=1}^{N}\E[\langle\bar{X}_i^2-\dfrac{1}{\sqrt{t}}\bar{X}_iG_i\rangle_{t,\Delta}].\label{dit}
    \end{align}
    Applying integration by parts w.r.t. $G_i$, we can simplify \eqref{dit} as:
    \begin{align}
        \lim_{N\to\infty}\dfrac{\partial i_{t,\Delta}}{\partial t}
        =\lim_{N\to\infty}\dfrac{1}{2N}\sum_{i=1}^{N}\E[\langle\bar{X}_i\rangle_{t,\Delta}^2]=\dfrac{1}{2}E_{t,\Delta}.
        \label{e65}
    \end{align}
    Taking the derivative of \eqref{48} with respect to $\Delta^{-1}$ on both sides when $N\to \infty$, by Lemma \ref{lemma4} and Lemma \ref{lemma5}, we have
    \begin{align}
        \dfrac{\alpha}{2}\dfrac{E_{t,\Delta}}{1+E_{t,\Delta}/\Delta} = \dfrac{\alpha}{2}\dfrac{E_{0,\Delta}}{1+E_{0,\Delta}/\Delta} +\dfrac{1}{2}\int_{0}^{t}\dfrac{\partial E_{s,\Delta}}{\partial\Delta^{-1}}ds.
        \label{e66}
    \end{align}
    The proof proceeds by showing that $E_{t,\Delta}^{(\infty)}$ and $E_{t,\Delta}$ satisfy the same PDE in Lemma~\ref{lem8}.
    First, we observe that if $E:=E(t,\Delta)$ is any fixed point of \eqref{fixed point}, then
    \begin{align}
        \dfrac{\alpha}{2}\dfrac{\partial}{\partial t}(\dfrac{E}{1+E/\Delta})=\dfrac{1}{2}\dfrac{\partial E}{\partial\Delta^{-1}}. \label{eq:fixed_E}
    \end{align}
    Indeed, denote
    \begin{align}
        A :=  ({\rm mmse}^*)'(\dfrac{\alpha}{\Delta+E}+t).
    \end{align}
    Since $E=E(t,\Delta)$ satisfies
    \begin{align}
        E={\rm mmse}^*(\dfrac{\alpha}{\Delta+E}+t),
    \end{align}
    by the chain rule, we have
    \begin{align}
        \dfrac{\partial E}{\partial \Delta^{-1}}&=A\alpha\dfrac{\Delta^2-\dfrac{\partial E}{\partial \Delta^{-1}}}{(\Delta+E)^2};\\
        \dfrac{\partial E}{\partial t}&=A\left(\dfrac{-\alpha \dfrac{\partial E}{\partial t}}{(\Delta+E)^2}+1\right),
    \end{align}
    which implies
    \begin{align}
        \dfrac{\partial E}{\partial \Delta^{-1}}&=\dfrac{A\alpha\Delta^2}{(\Delta+E)^2+\alpha A};\\
        \dfrac{\partial E}{\partial t}&=\dfrac{A(\Delta+E)^2}{(\Delta+E)^2+\alpha A}.
    \end{align}
    Plugging them into both sides of \eqref{eq:fixed_E}, we obtain
    \begin{align}
        {\rm L.H.S.}&=\dfrac{\alpha}{2}\dfrac{\partial E/\partial t}{(1+E/\Delta)^2}=\dfrac{\alpha}{2}\dfrac{1}{(1+E/\Delta)^2}\dfrac{A(\Delta+E)^2}{(\Delta+E)^2+\alpha A}=\dfrac{\alpha}{2}\dfrac{A\Delta^2}{(\Delta+E)^2+\alpha A};
        \\
        {\rm R.H.S.}&=\dfrac{1}{2}\dfrac{A\alpha\Delta^2}{(\Delta+E)^2+\alpha A},
    \end{align}
    which verifies \eqref{eq:fixed_E}.
    This suggests that $\dfrac{E_{t,\Delta}^{(\infty)}}{1+E_{t,\Delta}^{(\infty)}/\Delta}$ follows the same track as $\dfrac{E_{t, \Delta}}{1+E_{t, \Delta}/\Delta}$, i.e.,
    \begin{align}
         \dfrac{\alpha}{2}(\dfrac{E_{t,\Delta}^{(\infty)}}{1+E_{t,\Delta}^{(\infty)}/\Delta}- \dfrac{E_{0,\Delta}^{(\infty)}}{1+E_{0,\Delta}^{(\infty)}/\Delta})
         =\dfrac{\alpha}{2}\int_{0}^{t}\dfrac{\partial}{\partial s}(\dfrac{E_{s,\Delta}^{(\infty)}}{1+E_{s,\Delta}^{(\infty)}/\Delta})ds
         =\dfrac{1}{2}\int_{0}^{t}\dfrac{\partial E_{s,\Delta}^{(\infty)}}{\partial \Delta^{-1}}ds.
         \label{e76}
    \end{align}
    As $E_{t,\Delta}^{(\infty)}$ is the unique fixed point of \eqref{fixed point} and initial condition $E_{0, \Delta} = E_{0,\Delta}^{(\infty)}$ holds for any $\Delta<\Delta_{\rm AMP}$, we have $E_{t, \Delta} = E_{t,\Delta}^{(\infty)}$ as a result of the common property in PDE.
\end{proof}

The property is given as follows:

\begin{lemma}\label{lem8}
    The solution of the following PDE is unique:
    \begin{equation}
        \left\{\begin{array}{c}
            \alpha\dfrac{\partial}{\partial t}(\dfrac{E}{1+E/\Delta})=\dfrac{\partial E}{\partial \Delta^{-1}},\\
            E(\delta, 0)= E_{0,\Delta}.
    \end{array}\right. \label{PDE1}
    \end{equation}
\end{lemma}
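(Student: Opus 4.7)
The plan is to apply the method of characteristics to reduce uniqueness of the PDE to uniqueness of the fixed-point equation~\eqref{fixed point}, which is already provided by Proposition~\ref{prop_unique}. The key observation is that along each characteristic, $E$ is conserved, and the conserved value must simultaneously satisfy the $t=0$ fixed-point equation (by the initial condition) and the $t>0$ fixed-point equation~\eqref{fixed point} (by the propagation rule), forcing it to coincide with the unique solution of the latter.

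First I would recast the PDE in standard quasilinear form. With $\phi(E,\Delta):=E\Delta/(\Delta+E)$ one computes $\partial_E\phi=\Delta^2/(\Delta+E)^2$, and using $\partial_{\Delta^{-1}}=-\Delta^2\partial_\Delta$ the PDE becomes
\begin{align*}
\frac{\alpha}{(\Delta+E)^2}\,\partial_t E+\partial_\Delta E=0.
\end{align*}
The associated characteristic ODEs are $d\Delta/d\lambda=1$, $dt/d\lambda=\alpha/(\Delta+E)^2$, $dE/d\lambda=0$, so $E$ is constant along each characteristic. Integrating along the characteristic that starts at $(\Delta_0,0)$, where $E\equiv E_{0,\Delta_0}$, yields the invariant
\begin{align*}
\frac{\alpha}{\Delta+E}+t=\frac{\alpha}{\Delta_0+E_{0,\Delta_0}}.
\end{align*}
Since $E_{0,\Delta_0}$ is the (unique, for $\Delta_0<\Delta_{\rm AMP}$) fixed point of $E={\rm mmse}^*(\alpha/(\Delta_0+E))$, we have $\alpha/(\Delta_0+E_{0,\Delta_0})=({\rm mmse}^*)^{-1}(E)$, so the invariant becomes
\begin{align*}
E={\rm mmse}^*\!\left(\frac{\alpha}{\Delta+E}+t\right),
\end{align*}
which is exactly \eqref{fixed point}. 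Applying Proposition~\ref{prop_unique} then forces $E(t,\Delta)$ to equal the unique solution of this equation whenever $\Delta<\Delta_{\rm AMP}$, establishing uniqueness.

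The main subtlety I expect is ensuring that the method of characteristics applies globally, so the derivation above holds for every target point $(t_*,\Delta_*)$ with $t_*>0$. Concretely, two things must be checked: that the backward characteristic through any such target reaches the initial line $\{t=0\}$, and that ${\rm mmse}^*$ is strictly monotonic so that $({\rm mmse}^*)^{-1}$ is well-defined on its range. The first follows because the characteristic vector field $(dt/d\lambda,d\Delta/d\lambda)=(\alpha/(\Delta+E)^2,1)$ is smooth and has strictly positive components on $\{\Delta>0,\,E\ge 0\}$, precluding any finite-parameter obstruction to backward continuation. The second is a standard property of the MMSE function for a non-degenerate compactly supported prior, and is already implicitly used in the proof of Proposition~\ref{prop_unique} (through the strict monotonicity of $E\mapsto{\rm mmse}^*(\alpha/(\Delta_0+E))$).
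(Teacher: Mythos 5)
Your proof is correct and uses the same central idea as the paper — the PDE is solved by the method of characteristics, and along each characteristic $E$ is conserved while $\alpha/(\Delta+E)+t$ is an invariant. However, your concluding step is genuinely different from the paper's. The paper never invokes Proposition~\ref{prop_unique} in its proof of Lemma~\ref{lem8}: after deriving the explicit characteristic curve $\Delta(t)$ in closed form, it observes that this curve depends only on $\Delta(0)$ and the prescribed initial value $E_{0,\Delta(0)}$, so for two putative solutions $E_1$, $E_2$ with the same initial data the level curves must coincide and hence $E_1\equiv E_2$. This is a self-contained PDE uniqueness argument that makes no assumption about what $E_{0,\Delta}$ actually is. Your proof instead converts the invariant into the statement that $E$ satisfies the fixed-point equation~\eqref{fixed point} for all $t$, and then appeals to Proposition~\ref{prop_unique}. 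This works, but note that it is not self-contained: it relies on the contextual fact that the initial datum $E_{0,\Delta_0}$ is itself the unique fixed point of $E={\rm mmse}^*(\alpha/(\Delta_0+E))$, which is true where the lemma is applied but is not part of the lemma's hypotheses. In exchange, your argument actually yields the stronger conclusion $E(t,\Delta)=E_{t,\Delta}^{(\infty)}$, which is ultimately what the surrounding theorem needs, so nothing is wrong — the emphasis is simply different. One other small gap, which you flag yourself but which is also present (silently) in the paper: both arguments require that the backward characteristic from an arbitrary $(\Delta_*,t_*)$ reaches $\{t=0\}$ at a point with $\Delta_0>0$, which one can verify is equivalent to $\alpha\Delta_*>E_*t_*(\Delta_*+E_*)$; your appeal to the positivity of the vector field does not by itself rule out exiting through $\{\Delta=0\}$ before $\{t=0\}$, and a fully rigorous treatment would either bound the domain of $(t,\Delta)$ or supply a boundary condition at $\Delta=0$.
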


\begin{proof}
    For any solution $E$ of the PDE \eqref{PDE1}, we have 
    \begin{align}
        \alpha \dfrac{\partial E}{\partial t}\dfrac{1}{(\Delta+E)^2}=-\dfrac{\partial E}{\partial\Delta}.
    \end{align}
    $\partial E/\partial t=0$ indicates that $\partial E/\partial\Delta=0$, then $E$ is a constant function which is unique. Now we suppose  $\partial E/\partial t\neq0$.
    
    Suppose $\Delta$ is a function of $t$ such that $E(\Delta(t), t)=E(\Delta(0), 0)$ for all $t$. 
    Then
    \begin{align}
        0=&\dfrac{\partial E(\Delta(t), t)}{\partial t}\\
        =
        &\dfrac{\partial E}{\partial\Delta}\dfrac{\partial \Delta}{\partial t}+\dfrac{\partial E}{\partial t}\\
        =
        & \dfrac{\partial E}{\partial t}(1-\dfrac{\alpha}{(\Delta+E)^2}\dfrac{\partial\Delta}{\partial t})
    \end{align}
    implies 
    \begin{align}
        \dfrac{\alpha}{(\Delta+E)^2}\dfrac{\partial \Delta}{\partial t}-1=0.
    \end{align}
    This indicates that $\alpha/(\Delta+E)+t$ is a constant as a function of $t$ since
    \begin{align}
        &\dfrac{\partial}{\partial t}(\dfrac{\alpha}{\Delta(t)+E(\Delta(t), t)}+t) \\
        =&-\dfrac{\alpha}{(\Delta+E)^2}\dfrac{\partial \Delta}{\partial t}+1\\
        =& 0.
    \end{align}
    Therefore
    \begin{align}
        \dfrac{\alpha}{\Delta(0)+E(\Delta(0), 0)}
        =\dfrac{\alpha}{\Delta(t)+E(\Delta(t), t)}+t.
    \end{align}
    If we let 
    \begin{align}
        \Delta(t)= \dfrac{\alpha}{\dfrac{\alpha}{\Delta(0)+E(\Delta(0), 0)}-t}-E(\Delta(0), 0)\label{Delta_t},
    \end{align}
    then for any solution $E$ of PDE \eqref{PDE1}, there always exists $\Delta(t)$ such that
    \begin{align}
        E(\Delta(t), t)=E(\Delta(0), 0) \text{ \ \ \ for all \   } t,
    \end{align}
    where $\Delta(0)$ and $E(\Delta(0), 0)$ are given previously. 
    
    Suppose the PDE \eqref{PDE1} has two solutions $E_1$ and $E_2$.
    Then there exist $\Delta_1$ and $\Delta_2$ such that
    \begin{align}
        E_1(\Delta_1(t), t)=E_1(\Delta_1(0), 0) \\
        E_2(\Delta_2(t), t)=E_2(\Delta_2(0), 0).
    \end{align}
    If we let $\Delta_1(0)=\Delta_2(0)=\delta$, then $E_1(\Delta_1(0), 0)=E_1(\delta, 0)=E_{0,\Delta}=E_2(\delta, 0)=E_2(\Delta_2(0), 0)$ which implies $\Delta_1(t)=\Delta_2(t)$. Since any $(\Delta, t)$ can be tracked by \eqref{Delta_t} , we have $E_1\equiv E_2$.
\end{proof}

\subsection{Proof of Theorem \ref{theorem 7}}\label{appen:proof_of_thm1}

In order to prove Theorem \ref{theorem 7}, we introduce the following lemmas.

\begin{lemma}\label{lemma_W_properties}
 With probability at least $1-o_N(1)$, the following inequality holds for some $C_1(\epsilon,T),C_2(\epsilon,T)$:
\begin{align}
 \frac{1}{\sqrt{N}}\sup_{t\in [0,T]}\|W_t\|_2 & \le C_1,
 \label{BM_property}\\
\frac{1}{N}\int_{t_{l-1}}^{t_l}\|W_t-W_{t_{l-1}}\|_2^2dt & \le C_2\delta^{2} , \  for \ l=1,2,...,L.
\label{BM_prop2}
\end{align}
\end{lemma}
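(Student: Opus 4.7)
The plan is to exploit the product structure of the $N$-dimensional Brownian motion $W_t = (W_t^1,\ldots,W_t^N)$, whose coordinates $W^i$ are independent standard $1$-dimensional Brownian motions. Each of the two quantities to bound then decomposes as a sum over $i \le N$ of iid random variables with sub-exponential tails, so Bernstein's inequality for iid sub-exponentials produces tail bounds of the form $e^{-cN}$; this is more than enough to absorb a union bound over the $L = T/\delta$ subintervals appearing in \eqref{BM_prop2}.

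For \eqref{BM_property}, I would start from the coordinate-wise bound $\sup_{t\in[0,T]}\|W_t\|_2^2 \le \sum_{i=1}^N Y_i$ with $Y_i:=\sup_{t\in[0,T]}(W_t^i)^2$. By the reflection principle, $\sup_{t\le T}|W_t^i|$ has sub-Gaussian tails with parameter $O(\sqrt{T})$, so each $Y_i$ is sub-exponential with $\E[Y_i]=O(T)$ and $\|Y_i\|_{\psi_1}=O(T)$. The $Y_i$ are iid across $i$ since the coordinate Brownian motions are independent, so Bernstein's inequality for iid sub-exponentials yields $\frac{1}{N}\sum_{i=1}^N Y_i \le C'T$ with probability $1-e^{-cN}$, which gives \eqref{BM_property} with $C_1 = O(\sqrt{T})$.

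For \eqref{BM_prop2}, I would handle each $l$ separately and then union-bound. Set $Z_{i,l}:=\int_{t_{l-1}}^{t_l}(W_t^i-W_{t_{l-1}}^i)^2\,dt$, so the left-hand side of \eqref{BM_prop2} equals $\frac{1}{N}\sum_{i=1}^N Z_{i,l}$. By Brownian scaling, the $Z_{i,l}$ are iid over $i$ with mean $\delta^2/2$, and the crude bound $Z_{i,l}\le \delta\cdot\sup_{s\le\delta}|\tilde B_s|^2$ (obtained by bounding the integrand by its supremum, with $\tilde B$ a standard $1$-d Brownian motion) shows $\|Z_{i,l}\|_{\psi_1}=O(\delta^2)$. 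Bernstein then gives $\frac{1}{N}\sum_{i=1}^N Z_{i,l}\le C_2\delta^2$ with probability $1-e^{-cN}$ for each $l$, and a union bound over $l=1,\ldots,T/\delta$ preserves the $1-o_N(1)$ guarantee, since $T/\delta$ is independent of $N$.

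There is no genuine obstacle here; this is a routine Brownian concentration argument whose whole role in the paper is to dispose of the stochastic noise terms appearing in the Girsanov bound. The apparent $\epsilon$-dependence of the constants $C_1,C_2$ is vacuous at this step, because the exponential-in-$N$ concentration already delivers $1-o_N(1)$ without any tuning; the $\epsilon$ simply records that these constants will later be composed with other $\epsilon$-dependent choices in the proof of Theorem~\ref{theorem 7}.
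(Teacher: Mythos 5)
Your proof is correct, and it reaches the conclusion by a genuinely different concentration tool than the paper. Both arguments start from the same coordinate-wise decomposition, viewing $\|W_t\|_2^2$ as a sum over $N$ independent one-dimensional Brownian quantities and similarly for $\int_{t_{l-1}}^{t_l}\|W_t-W_{t_{l-1}}\|_2^2\,dt$. From there the paper applies the Strong Law of Large Numbers to each normalized sum (after bounding the integral by $\delta$ times the sup over the subinterval), obtaining almost-sure convergence to the relevant expectation and hence a $1-o_N(1)$ guarantee with constants depending on $T$ and $\alpha$. You instead observe that $\sup_{t\le T}(W^i_t)^2$ and $\int_{t_{l-1}}^{t_l}(W^i_t-W^i_{t_{l-1}})^2\,dt$ are sub-exponential (via the reflection principle and Brownian scaling, with $\psi_1$-norms $O(T)$ and $O(\delta^2)$ respectively), and invoke Bernstein's inequality to get $e^{-cN}$ tails. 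This buys you an explicit, quantitative failure probability and makes the union bound over the $L=T/\delta$ subintervals manifestly harmless, whereas SLLN only gives a qualitative $o_N(1)$; the trade-off is that Bernstein asks for a little more (the sub-exponential tail estimate) where SLLN only needs a finite mean. Your observation that the $\epsilon$-dependence of $C_1,C_2$ is vacuous at this stage is accurate and matches the paper, whose proof also produces constants depending only on $T$ and $\alpha$. Your version also sidesteps two small slips in the paper's writeup (an extraneous $T^2$ factor in the SLLN limit and a stray $M$ where the dimension should be $N$), which the cleaner quantitative route simply never touches.
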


\begin{proof}
Actually, as $W_t$ is a $N$ dimensional vector and all the dimensions are independent from each other, we can write $\|W_t\|_2^2$ as the sum of $W_{t,i}^2,i=1,2,...,M$, which are $M$ i.i.d. squared normal distributions. We have:
\begin{align}
\frac{1}{N}\sup_{t\in [0,T]}\|W_t\|_2^2&=\sup_{t\in [0,T]}\frac{1}{N}\sum_{i=1}^MW_{t,i}^2\\
&\le \frac{M}{N}\frac{1}{M}\sum_{i=1}^M\sup_{t\in [0,T]}W_{t,i}^2
\end{align}

By S.L.L.N, $\frac{1}{M} \sum_{i=1}^M \sup_{t\in [0,T]}W_{t,i}^2\to T^2\E \{ \sup_{t\in [0,T]} W_{t,1}^2 \}$ almost surely. And as $M/N \to \alpha$, we can always find $C_{\alpha}$ to bound $M/N$. Thus, we only need to let 
\begin{align}
    C_1=C_{\alpha} T^2\E\{\sup_{t\in [0,T]}W_{t,1}^2\}
\end{align}

For the second constant $C_2$, if we write $\|W_t-W_{t_{l-1}}\|_2^2=\sum_{i=1}^M(W_{t,i}-W_{t_{l-1},i})^2$ and use the fact that $\int_{t_{l-1}}^{t_l}\|W_t-W_{t_{l-1}}\|_2^2dt\le \delta \sup_{t\in [t_{l-1},t_{l}]}\|W_t-W_{t_{l-1}}\|_2^2$. We can get $C_2$ in the same way as $C_1$ by S.L.L.N.
\end{proof}

\begin{lemma}\label{lemma_phi_op}
    With probability $1-o_N(1)$, we have 
    \begin{align}\label{op_condition}
        \|\bs{\phi}\|_{\text{op}}\leq C(1+\dfrac{2}{\sqrt{\alpha}}):=C_{\phi}
    \end{align}
    where $C$ is a constant.
\end{lemma}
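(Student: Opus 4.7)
The plan is to reduce to a standard non-asymptotic bound on the operator norm of a Gaussian random matrix with i.i.d.\ standard normal entries, and then rescale. Since $\bs{\phi}$ has i.i.d.\ $\mathcal{N}(0,1/M)$ entries, the matrix $\tilde{\bs{\phi}} := \sqrt{M}\,\bs{\phi}$ has i.i.d.\ $\mathcal{N}(0,1)$ entries, and $\|\bs{\phi}\|_{\text{op}} = \|\tilde{\bs{\phi}}\|_{\text{op}}/\sqrt{M}$.

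The first step is to invoke the Davidson--Szarek / Gordon bound for Gaussian matrices (equivalently, Gaussian concentration of $\|\cdot\|_{\text{op}}$ as a $1$-Lipschitz function of the entries combined with Gordon's comparison inequality for the expectation): for any $t>0$,
\begin{equation}
\mathbb{P}\!\left(\|\tilde{\bs{\phi}}\|_{\text{op}} \geq \sqrt{M} + \sqrt{N} + t\right) \leq 2\exp(-t^2/2).
\end{equation}
Dividing through by $\sqrt{M}$ and using $\sqrt{N/M}=1/\sqrt{\alpha}$, this becomes
\begin{equation}
\mathbb{P}\!\left(\|\bs{\phi}\|_{\text{op}} \geq 1 + \tfrac{1}{\sqrt{\alpha}} + \tfrac{t}{\sqrt{M}}\right) \leq 2\exp(-t^2/2).
\end{equation}

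The second step is to choose $t$ so that the deviation term matches the $1/\sqrt{\alpha}$ slack in the statement while still giving a probability $1 - o_N(1)$. Take $t = \sqrt{N}$, so that $t/\sqrt{M} = 1/\sqrt{\alpha}$. Then with probability at least $1 - 2\exp(-N/2) = 1 - o_N(1)$ we have
\begin{equation}
\|\bs{\phi}\|_{\text{op}} \leq 1 + \tfrac{2}{\sqrt{\alpha}},
\end{equation}
which proves the claim with $C=1$.

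There is essentially no obstacle here; the only subtlety is naming the correct non-asymptotic bound and verifying that the deviation parameter $t$ can be taken large enough (of order $\sqrt{N}$) to make the failure probability $o_N(1)$, while small enough that $t/\sqrt{M}$ is absorbed into the $2/\sqrt{\alpha}$ factor. Both requirements are simultaneously satisfied by $t=\sqrt{N}$ since $\alpha = M/N$ is fixed.
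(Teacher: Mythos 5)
Your proof is correct and follows essentially the same route as the paper, which simply cites Theorem~4.4.5 of Vershynin (2018) — the sub-Gaussian generalization of the Davidson--Szarek/Gordon bound you invoke. The only difference is that by using the Gaussian-specific version you obtain the sharper constant $C=1$; the paper's citation yields an unspecified universal constant, but both choices of $t=\sqrt{N}$ and the rescaling by $\sqrt{M}$ are identical.
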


\begin{proof}
    Applying Theorem 4.4.5 in \citet{Vershynin_2018}.
\end{proof}

\begin{lemma}\label{lemma11}
    When condition \eqref{BM_property} satisfies, there exists a constant $C_3$ only depends on $C_1$ such that:
    \begin{align}
        \dfrac{1}{\sqrt{N}}\|\hat{m}(\hat{z_t}, t)-\hat{m}(\hat{z}_t, t_{l-1})\|_2\leq C_3|t-t_{l-1}|, \ for \ t\in[t_{l-1}, t_l), \ l=1,2,...,L.
        \label{mhat_part2}
    \end{align}
\end{lemma}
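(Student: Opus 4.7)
My plan is to prove the lemma by induction on the AMP iteration index $k$, propagating Lipschitz regularity in $t$ through the finitely many AMP iterates of \eqref{AMP1}-\eqref{AMP2}, with the spatial argument $\hat{\bs{z}}_t$ held fixed throughout. The time argument enters the recursion through only three channels: the state-evolution variance $\tau_{k,t}^2$, the Onsager coefficient $b_t^{(k)}$, and the explicit $t$ in the denoiser $\eta$. Each of these will be shown to be smooth in $t$ with dimension-free bounds.

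The first step is to reformulate the denoiser. Completing the square in its integrand yields
\begin{align*}
\eta(\bs{u};\Sigma^2,\bs{z},t)_l=\eta^{*}\!\left(\frac{z_l+u_l/\Sigma^2}{t+\Sigma^{-2}};\,\frac{1}{t+\Sigma^{-2}}\right),
\end{align*}
where $\eta^{*}(a;\sigma^2):=\E[X\mid X+\sigma g=a]$ with $X\sim P_0$. Since $P_0$ is supported on $[-L_\theta,L_\theta]$, standard covariance identities give $\partial_a\eta^{*}=\sigma^{-2}\mathrm{Var}(X\mid X+\sigma g=a)\le L_\theta^2/\sigma^2$ and an analogous polynomial-in-$a$ bound on $\partial_{\sigma^2}\eta^{*}$. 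Because $\Sigma^2=\tau_{k,t}^2$ is uniformly bounded away from $0$ (via $\tau_{k,t}^2\ge\Delta/\alpha$) and from $\infty$, the maps $t\mapsto\gamma(t):=t+\Sigma^{-2}$ and $t\mapsto 1/\gamma(t)$ are smooth on $[0,T]$ with bounded derivatives, removing the apparent $1/t$-type singularity of the raw form $-(z-tx)^2/(2t)$. Simultaneously, using $\tau_{k+1,t}^2=\tfrac{1}{\alpha}(\Delta+\mathrm{mmse}^{*}(\tau_{k,t}^{-2}+t))$ together with smoothness of $\mathrm{mmse}^{*}$ for compactly supported priors (via the I-MMSE relation), an induction shows that $t\mapsto\tau_{k,t}^2$ is Lipschitz on $[0,T]$ for each $k\le K$; the same argument applied to $\partial_u\eta$ gives Lipschitz continuity of $t\mapsto b_t^{(k)}$.

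Next comes the AMP induction. Let $L_k^m$ and $L_k^r$ denote Lipschitz constants in $t$, measured in the normalized $N^{-1/2}\|\cdot\|_2$ norm, of $\hat{\bs{m}}^{(k)}$ and $\bs{r}^{(k)}$ respectively, with $\hat{\bs{z}}_t$ fixed. Since $\hat{\bs{m}}^{(0)}=0$, we start from $L_0^m=0$. Applying the chain rule to \eqref{AMP1}-\eqref{AMP2} and combining the derivative bounds above with the operator-norm bound $\|\bs{\phi}\|_{\mathrm{op}}\le C_\phi$ from Lemma~\ref{lemma_phi_op}, the deterministic bound $\|\hat{\bs{m}}^{(k)}\|_\infty\le L_\theta$, and condition \eqref{BM_property} for $\|\hat{\bs{z}}_t\|_2/\sqrt{N}$, yields a linear recursion of the form $L_{k+1}^m,L_{k+1}^r\le A(L_k^m+L_k^r)+B$ for finite constants $A,B$ depending only on $C_\phi, L_\theta, \Delta, \alpha, C_1, T$. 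Iterating $K$ times produces a finite $L_K^m$, and we set $C_3:=L_K^m$.

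The main technical obstacle will be managing the apparent singularity as $t\to 0$: differentiating $-(z-tx)^2/(2t)$ directly produces terms scaling like $z^2/t^2$ which are unbounded. The completing-the-square reformulation of the first step avoids this entirely, since $\gamma=t+\Sigma^{-2}\ge\Sigma^{-2}>0$ uniformly over $t\in[0,T]$ and $k\le K$. A secondary bookkeeping task is to control $\|\bs{\phi}^\top\bs{r}^{(k)}+\hat{\bs{m}}^{(k)}\|_2/\sqrt{N}$, which appears in the denoiser-derivative bounds coordinatewise; this propagates from the operator-norm bound on $\bs{\phi}$ together with the a priori bound $\|\bs{y}\|_2/\sqrt{N}\le C$ inherited from the compact support of $\bs{\theta}$.
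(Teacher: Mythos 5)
Your proposal is correct in spirit and establishes the claim, but it takes a genuinely different (and noticeably heavier) route than the paper. The paper's own proof is much shorter: it first bounds $\|\hat{\bs{z}}_t\|_2\le\sqrt{N}(L_\theta T + C_1)$ using condition \eqref{BM_property}, then treats $\hat{m}(\hat{\bs{z}}_{t,i},t)$ coordinatewise as the scalar posterior mean $E[\theta\mid \theta+(\tau_{k,t}^{-2}+t)^{-1/2}G=\hat{\bs{z}}_{t,i}]$ under the Gibbs measure with precision $\tau_{k,t}^{-2}+t$, and differentiates this single object directly in $t$. The derivative evaluates to a cumulant expression $-\tfrac12(\langle\theta^3\rangle_z-\langle\theta\rangle_z\langle\theta^2\rangle_z-2\hat{\bs{z}}_{t,i}(\langle\theta^2\rangle_z-\langle\theta\rangle_z^2))$, which is bounded by $L_\theta^3+|\hat{\bs{z}}_{t,i}|L_\theta^2$ on the compact support, giving $C_3=\sqrt{(L_\theta^3+L_\theta^4)(L_\theta^3+(L_\theta T+C_1)^2)}$ with no $K$-dependence and no explicit AMP induction. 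In contrast, you carry the $t$-Lipschitz bound through all $K$ iterations via a coupled recursion on $(L_k^m,L_k^r)$, after first proving that $\tau_{k,t}^2$ and $b_t^{(k)}$ are $t$-Lipschitz. Your completing-the-square reformulation is exactly the device the paper also uses (implicitly, via the combined precision $\tau_{k,t}^{-2}+t$) to remove the $t\to 0$ singularity.

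The trade-off is worth noting. The paper's argument implicitly treats the time derivative of the AMP output as coming only from the last denoiser step's precision parameter, taking $\hat{\bs{z}}_{t,i}$ as the effective argument and not differentiating through the $t$-dependence of $\tau_{k,t}$, the Onsager terms, or the earlier iterates $\bs{r}^{(k)},\hat{\bs{m}}^{(k)}$. Your proof propagates all these dependences explicitly, in the same spirit as the paper's own proof of Lemma~\ref{lemma10} (the $\bs{z}$-Lipschitz case, which the paper \emph{does} prove by a $K$-step AMP induction), and is therefore more conservative: your resulting $C_3=L_K^m$ picks up dependence on $K$, $C_\phi$, and $\|\bs{y}\|_2/\sqrt{N}$, whereas the paper's $C_3$ depends only on $L_\theta,T,C_1$. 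Both bounds are dimension-free and suffice for Theorem~\ref{theorem 7} (where $K$ is chosen before $\delta$), so the extra dependence in your constant is harmless. One place you should be careful to phrase correctly: the $t$-Lipschitz property of $b_t^{(k)}$ cannot be established \emph{before} the AMP induction, since $b_t^{(k)}$ depends on the iterate $\bs{r}^{(k-1)},\hat{\bs{m}}^{(k-1)}$ evaluated at the same $t$; it must be folded into the joint recursion for $(L_k^m,L_k^r)$, just as the paper handles the analogous product term $\bs{r}^{(k)}\prod_i b_t^{(k+i)}$ inside its induction for Lemma~\ref{lemma10}.
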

\begin{proof}
    For $t\in[t_{l-1}, t_l)$, $\|\hat{z}_t\|_2=\|\displaystyle\int_0^t\hat{m}(\hat{z}_t, t)dt+W_t\|_2\leq\|\displaystyle\int_0^t\hat{m}(\hat{z}_t, t)dt\|_2+\|W_t\|_2\leq\sqrt{N}L_\theta T+\sqrt{N}C_1=\sqrt{N}(L_\theta T+C_1)$.

    Consider the $i$-th component of $\hat{m}$,
    Let
    \begin{align}
        \langle A(\theta)\rangle_z=\dfrac{\displaystyle\int A(\theta)\exp(-\dfrac{(\hat{z}_{t,i}-\theta)^2(\tau_{k,t}^{-2}+t)}{2})\pi(\theta
        )d\theta}{\displaystyle\int \exp(-\dfrac{(\hat{z}_{t,i}-\theta)^2(\tau_{k,t}^{-2}+t)}{2})\pi(\theta
        )d\theta}
    \end{align}
    Since $\hat{m}(\hat{z}_{t,i},t)=E[\theta|\theta+(\tau_{k,t}^{-2}+t)^{-1/2}G=\hat{z}_{t,i}]$, then $|\dfrac{d\hat{m}(\hat{z}_{t,i},t)}{dt}|=|-\dfrac{1}{2}(\langle\theta^3\rangle_z-\langle\theta\rangle_z\langle\theta^2\rangle_z-2\hat{z}_{t,i}(\langle\theta^2\rangle_z-\langle\theta\rangle_z^2))|\leq\dfrac{1}{2}(2L_\theta^3+2|\hat{z}_{t,i}|L_\theta^2)=L_\theta^3+|\hat{z}_{t,i}|L_\theta^2$ which means $|\hat{m}(\hat{z}_{t,i},t)-\hat{m}(\hat{z}_{t,i},t_{l-1})|\leq(L_\theta^3+|\hat{z}_{t,i}|L_\theta^2)|t-t_{l-1}|\leq\sqrt{(L_\theta^3+|\hat{z}_{t,i}|^2)(L_\theta^3+L_\theta^4)}|t-t_{l-1}|$. Therefore,
    \begin{align}
        &\dfrac{1}{\sqrt{N}}\|\hat{m}(\hat{z_t}, t)-\hat{m}(\hat{z}_t, t_{l-1})\|_2 \\
        =&\dfrac{1}{\sqrt{N}}\sqrt{\displaystyle\sum_{i=1}^N|\hat{m}(\hat{z}_{t,i},t)-\hat{m}(\hat{z}_{t,i},t_{l-1})|^2}\\
        \leq&\dfrac{1}{\sqrt{N}}\sqrt{\displaystyle\sum_{i=1}^N(L_\theta^3+|\hat{z}_{t,i}|^2)(L_\theta^3+L_\theta^4)(t-t_{l-1})^2} \\
        =&\dfrac{1}{\sqrt{N}}\sqrt{(L_\theta^3+L_\theta^4)\displaystyle\sum_{i=1}^N(L_\theta^3+|\hat{z}_{t,i}|^2)}|t-t_{l-1}|\\
        \leq&\dfrac{1}{\sqrt{N}}\sqrt{(L_\theta^3+L_\theta^4)(NL_\theta^3+\|\hat{z}_t\|_2^2)}|t-t_{l-1}|\\
        \leq & \sqrt{(L_\theta^3+L_\theta^4)(L_\theta^3+(L_\theta T+C_1)^2)}|t-t_{l-1}|.
    \end{align}
    Finally, to complete the proof, we only need to let $C_3=\sqrt{(L_\theta^3+L_\theta^4)(L_\theta^3+(L_\theta T+C_1)^2)}$.
\end{proof}

\begin{lemma}
    Suppose $f(x)$ is $L_f$-Lipschitz, and $g(x)$ is $L_g$-Lipschitz. Moreover, if  $\|f\|_2\leq M_f$ and $\|g\|_2\leq M_g$, then 
    \begin{align}
        f(x)g(x) \text{ is } L_{fg}\text{-Lipschitz}
    \end{align}
    where $L_{fg}=L_fM_g+L_gM_f$.
\end{lemma}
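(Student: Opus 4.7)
The plan is to use the classical add-and-subtract decomposition, which is the canonical way to prove that a product of bounded Lipschitz functions is Lipschitz. Write
\begin{align}
f(x)g(x) - f(y)g(y) = f(x)\bigl(g(x) - g(y)\bigr) + \bigl(f(x) - f(y)\bigr)g(y),
\end{align}
take norms on both sides, and apply the triangle inequality together with the submultiplicative property of the norm to obtain
\begin{align}
\|f(x)g(x) - f(y)g(y)\|_2 \le \|f(x)\|_2\,\|g(x) - g(y)\|_2 + \|g(y)\|_2\,\|f(x) - f(y)\|_2.
\end{align}
Plugging in the uniform bounds $\|f\|_2 \le M_f$ and $\|g\|_2 \le M_g$ together with the two Lipschitz hypotheses then yields the bound $(M_f L_g + M_g L_f)|x-y|$, which is exactly the claim with $L_{fg} = L_f M_g + L_g M_f$.

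The only real decision to make is how to interpret the product $f(x)g(x)$ in light of the vector norms $\|f\|_2, \|g\|_2$ appearing in the hypothesis. In the contexts where this lemma is invoked later in the appendix (Lipschitz control of maps such as $\bs{z}_t \mapsto \hat{m}(\bs{z}_t,t)$, which involve products of denoisers, Gibbs averages, and linear factors), the relevant product is either a scalar-by-scalar product or a componentwise product against a fixed bounded vector; in both interpretations, the triangle inequality step and the submultiplicative bound above remain valid verbatim, so the argument is unaffected.

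I do not anticipate any genuine obstacle in this proof — it is a two-line manipulation that does not use any property beyond the triangle inequality and the stated uniform norm bounds. The most one has to be careful about is keeping the roles of $M_f, L_f$ and $M_g, L_g$ symmetric so as not to drop one of the two cross terms, and matching the norm convention with whatever product structure is implicitly assumed (scalar, inner, or componentwise). Accordingly, I would present the proof as a single displayed chain of inequalities following the decomposition above, with a brief remark fixing the convention for $f(x)g(x)$.
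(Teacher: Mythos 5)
Your proof is correct and is the canonical argument for this fact. The paper itself states this lemma without providing a proof (treating it as a standard, elementary fact used as a building block in the inductive argument of Lemma~\ref{lemma10}), so there is no alternative approach in the paper to compare against; your add-and-subtract decomposition, triangle inequality, and uniform bounds are exactly the expected argument, and your remark about matching the product/norm convention to the scalar-by-scalar or componentwise use cases downstream is the right thing to flag.
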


\begin{lemma}\label{lemma10}
   When condition \eqref{op_condition} satisfies, there exists a constant $C_4(K, T)$ which only depends on $K, T$ such that:
   \begin{align}
       \|\hat{m}(\bs{z}_1,t)-\hat{m}(\bs{z}_2, t)\|_2\leq C_4\|\bs{z}_1-\bs{z}_2\|_2.
   \label{mhat_part1}
   \end{align}
\end{lemma}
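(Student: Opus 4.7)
The plan is to prove Lemma~\ref{lemma10} by induction on the iteration index $k$ of the AMP algorithm, tracking the operator norm of the Jacobians of $\hat{\bs{m}}^{(k)}$ and $\bs{r}^{(k)}$ viewed as functions of $\bs{z}_t$. Write $M_k := \|\partial \hat{\bs{m}}^{(k)}/\partial \bs{z}_t\|_{\text{op}}$ and $R_k := \|\partial \bs{r}^{(k)}/\partial \bs{z}_t\|_{\text{op}}$. The base case is trivial: $\hat{\bs{m}}^{(0)} = \bs{0}$ and $\bs{r}^{(0)} = \bs{y}$ are both independent of $\bs{z}_t$, giving $M_0 = R_0 = 0$. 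The goal is to close a linear two-term recursion whose coefficients depend only on $C_\phi$, $L_\theta$, $\alpha$, $\Delta$, and the state-evolution variances $\tau_{k,t}^2$, then iterate $K$ times.

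The central ingredient is the Lipschitz behavior of the denoiser. Since $\eta(\bs{u};\Sigma^2,\bs{z}_t,t)$ acts coordinate-wise and each component is a posterior mean of a variable supported in $[-L_\theta,L_\theta]$, its partial derivative with respect to $u_i$ equals a conditional variance over $\Sigma^2$, bounded by $L_\theta^2/\tau_{k,t}^2$, and its partial derivative with respect to $z_{t,i}$ equals a conditional covariance bounded by $L_\theta^2$. Since $\tau_{k,t}^2\ge\Delta/\alpha$ along the recursion~\eqref{rec1}, these bounds are uniform in $N$. Applying the chain rule to \eqref{AMP2} yields
\begin{align}
M_{k+1}\le \tfrac{L_\theta^2}{\tau_{k,t}^2}\bigl(C_\phi R_k+M_k\bigr)+L_\theta^2.
\end{align}
For the residual step \eqref{AMP1}, differentiation gives
\begin{align}
\frac{\partial \bs{r}^{(k)}}{\partial \bs{z}_t}=-\bs{\phi}\frac{\partial \hat{\bs{m}}^{(k)}}{\partial \bs{z}_t}+b_t^{(k)}\frac{\partial \bs{r}^{(k-1)}}{\partial \bs{z}_t}+\bs{r}^{(k-1)}\bigl(\nabla_{\bs{z}_t}b_t^{(k)}\bigr)^{\!\top}.
\end{align}
The first two terms are bounded by $C_\phi M_k$ and $|b_t^{(k)}|R_{k-1}$ respectively; the Onsager coefficient is itself bounded since it is an average over $N$ coordinates of quantities dominated by $L_\theta^2/\tau_{k-1,t}^2$.

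The hard part will be the last, rank-one summand. The vector $\bs{r}^{(k-1)}$ typically has $\|\bs{r}^{(k-1)}\|_2=O(\sqrt{N})$, so a naive bound on $\|\nabla_{\bs{z}_t}b_t^{(k)}\|_2$ would be useless. The saving grace is the $1/N$ normalization in \eqref{corr_term}: by the chain rule, $\nabla_{\bs{z}_t}b_t^{(k)}$ decomposes as $(N\alpha)^{-1}$ times a sum involving $(\partial \bs{u}^{(k-1)}/\partial \bs{z}_t)^{\!\top}\bs{v}+\bs{w}$, where $\bs{v},\bs{w}$ are bounded vectors coming from second derivatives of the scalar denoiser. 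Using $\|\bs{v}\|_2,\|\bs{w}\|_2=O(\sqrt{N})$ together with the inductive bounds on $R_{k-1}$ and $M_{k-1}$ gives $\|\nabla_{\bs{z}_t}b_t^{(k)}\|_2\le O(1/\sqrt{N})\cdot(C_\phi R_{k-1}+M_{k-1}+1)$, so the rank-one piece has operator norm $\|\bs{r}^{(k-1)}\|_2\|\nabla_{\bs{z}_t}b_t^{(k)}\|_2=O(1)$. The required bound $\|\bs{r}^{(k-1)}\|_2=O(\sqrt{N})$ with probability $1-o_N(1)$ is itself established inductively: at $k=0$, combining $\|\bs{\phi}\|_{\text{op}}\le C_\phi$ from Lemma~\ref{lemma_phi_op}, $\|\bs{\theta}\|_2\le\sqrt{N}L_\theta$, and standard concentration for the Gaussian noise $\bs{w}$ controls $\|\bs{y}\|_2$, and the inductive step uses $\|\hat{\bs{m}}^{(k)}\|_2\le\sqrt{N}L_\theta$ together with $|b_t^{(k)}|\le L_\theta^2/(\alpha\tau_{k-1,t}^2)$. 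Putting all of this together yields a linear recursion whose solution $M_K\le C_4(K,T)$ is the desired Lipschitz constant; the preceding Lipschitz-product lemma can be invoked to propagate the bound through the product $b_t^{(k)}\bs{r}^{(k-1)}$ cleanly.
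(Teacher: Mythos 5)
Your proposal is correct and follows the same core strategy as the paper — induction on the AMP iteration index $k$, with the essential mechanism being that the Onsager coefficient's gradient picks up a $1/\sqrt{N}$ factor from the averaging in \eqref{corr_term}, which cancels the $O(\sqrt{N})$ Euclidean norm of the residual vector $\bs{r}^{(k-1)}$ so that the rank-one contribution to the Jacobian of $\bs{r}^{(k)}$ stays $O(1)$. The bookkeeping differs, and yours is cleaner: you carry just two Jacobian operator norms $M_k,R_k$ and bound the rank-one term $\bs{r}^{(k-1)}(\nabla_{\bs{z}_t}b_t^{(k)})^\top$ by the product of $\|\bs{r}^{(k-1)}\|_2$ and $\|\nabla_{\bs{z}_t}b_t^{(k)}\|_2$ directly. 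The paper instead tracks a third family of quantities indexed by $v\ge 0$, the Lipschitz constants of $\bs{r}^{(k)}\prod_{i=0}^{v}b_t^{(k+i)}$, precisely because in a Lipschitz (rather than differential) formulation the scalar Onsager factors pile up under unrolling; this forces a more elaborate induction with constants $C'(k,v,T)$ and an explicit $C_b=O(1/\sqrt{N})$ multiplying $\|\bs{y}\|_2$. Both arguments implicitly condition on a high-probability event $\|\bs{y}\|_2=O(\sqrt{N})$ in addition to \eqref{op_condition}, and both require the uniform-in-$N$ lower bound $\tau_{k,t}^2\ge\Delta/\alpha$ to keep the denoiser's Lipschitz constant dimension-free. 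A small point worth making explicit in a full write-up: when differentiating \eqref{AMP2}, the denoiser depends on $\bs{z}_t$ both through the effective observation $\bs{\phi}^\top\bs{r}^{(k)}+\hat{\bs{m}}^{(k)}$ and directly through its third argument; your added $+L_\theta^2$ in the $M_{k+1}$ recursion accounts for the latter, whereas the paper folds the two channels into a single sufficient statistic $\bs{y}^*$ — either accounting is fine, but one of them must be present.
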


\begin{proof}
    First, for $1\leq i\leq N$, notice that $\eta(\bs{y}^*_i; \tau_{k,t}^2):=E[\theta\mid \theta+(\tau_{k,t}^{-2}+t)^{-1/2}G=\bs{y}^*_i]$, where $G\sim N(0,1)$, we have $\eta'(\bs{y}^*_i;\tau_{k,t}^2)=d\eta(\bs{y}^*_i; \tau_{k,t}^2)/d\bs{y}^*_i=(\tau_{k,t}^{-2}+t)\text{Var}(\theta\mid \theta+(\tau_{k,t}^{-2}+t)^{-1/2}G=\bs{y}^*_i)\leq(\alpha/\Delta+T)L_\theta^2$. Therefore, $\eta(\bs{y}^*_i;\tau_{k,t}^2)$ is $(\alpha/\Delta+T)L_\theta^2$-Lipschitz, and consequently, $\eta(\bs{y}^*;\tau_{k,t}^2)$ is $(\alpha/\Delta+T)L_\theta^2$-Lipschitz.

    Moreover, let
    \begin{align}
        \langle A(\theta)\rangle_{\bs{y}^*_i}=\dfrac{\displaystyle\int A(\theta)\exp(-\dfrac{(\bs{y}^*_i-\theta)^2(\tau_{k,t}^{-2}+t)}{2})\pi(\theta
        )d\theta}{\displaystyle\int \exp(-\dfrac{(\bs{y}^*_i-\theta)^2(\tau_{k,t}^{-2}+t)}{2})\pi(\theta
        )d\theta}
    \end{align}
    then $|d\eta'(\bs{y}^*_i;\tau_{k,t}^2)/d\bs{y}^*_i|=|(\tau_{k,t}^{-2}+t)^2(\langle\theta^3\rangle_{\bs{y}^*_i}-\langle\theta^2\rangle_{\bs{y}^*_i}\langle\theta\rangle_{\bs{y}^*_i}-2\langle\theta\rangle_{\bs{y}^*_i}\langle\theta^2\rangle_{\bs{y}^*_i}+2\langle\theta\rangle_{\bs{y}^*_i}^3)|\leq 6(\alpha/\Delta+T)^2L_\theta^3$ which means that $\eta'(\bs{y}^*_i;\tau_{k,t}^2)$ is $6(\alpha/\Delta+T)^2L_\theta^3$-Lipschitz. Therefore,
    \begin{align}
        &|\dfrac{1}{N\alpha}\sum_{i=1}^N[\eta'(\bs{y}^*;\tau_{k,t}^2)]_i-\dfrac{1}{N\alpha}\sum_{i=1}^N[\eta'(\tilde{\bs{y}};\tau_{k,t}^2)]_i|\\
        =&|\dfrac{1}{N\alpha}\sum_{i=1}^N\eta'(\bs{y}^*_i;\tau_{k,t}^2)-\dfrac{1}{N\alpha}\sum_{i=1}^N\eta'(\tilde{\bs{y}}_i;\tau_{k,t}^2)|\\
        \leq&\dfrac{1}{N\alpha}\sum_{i=1}^N|\eta'(\bs{y}^*_i;\tau_{k,t}^2)-\eta'(\tilde{\bs{y}}_i;\tau_{k,t}^2)|\\
        \leq&\dfrac{1}{N\alpha}\sum_{i=1}^N6(\alpha/\Delta+T)^2L_\theta^3|\bs{y}^*_i-\tilde{\bs{y}}_i| \\
        \leq&\dfrac{6(\alpha/\Delta+T)^2L_\theta^3}{\sqrt{N}\alpha}\|\bs{y}^*-\tilde{\bs{y}}\|_2
    \end{align}
    which means that $\dfrac{1}{N\alpha}\sum_{i=1}^N[\eta'(\bs{y}^*;\tau_{k,t}^2)]_i$ is $\dfrac{6(\alpha/\Delta+T)^2L_\theta^3}{\sqrt{N}\alpha}$-Lipschitz. 
    
    Denote $C_b:=\dfrac{6(\alpha/\Delta+T)^2L_\theta^3}{\sqrt{N}\alpha}$ and let $b_t^{(k+1)}(\bs{z},t):=\dfrac{1}{N\alpha}\sum_{i=1}^N[\eta'(\phi^T\bs{r}^{(k)}(\bs{z}, t)+\hat{m}^{(k)}(\bs{z},t);\tau_{k,t}^2)]_i$.
    
    Now we use induction to prove the following inequalities:
    \begin{align}
        \|\hat{m}^{(k)}(\bs{z}_1, t)-\hat{m}^{(k)}(\bs{z}_2, t)\|_2&\leq C(k, T)\|\bs{z}_1-\bs{z}_2\|_2 \label{lip_m_r_1}\\
        \|\bs{r}^{(k)}(\bs{z}_1, t)-\bs{r}^{(k)}(\bs{z}_2, t)\|_2&\leq C^\star(k, T)\|\bs{z}_1-\bs{z}_2\|_2 \label{lip_m_r_2} \\
        \|\bs{r}^{(k)}(\bs{z}_1, t)\prod_{i=0}^v b_t^{(k+i)}(\bs{z}_1,t)-\bs{r}^{(k)}(\bs{z}_2, t)\prod_{i=0}^v b_t^{(k+i)}(\bs{z}_2,t)\|_2&\leq C'(k, v, T)\|\bs{z}_1-\bs{z}_2\|_2 \text{ for all } v\geq 0\label{lip_m_r_3}
    \end{align}
    where 
    \begin{align}
        C(k+1, T)=&(\alpha/\Delta+T)L_\theta^2(C_{\phi}C^\star(k, T)+C(k, T)) \\
        C^*(k+1, T)=&(C_{\phi}C(k+1, T, N)+C'(k, 0, T)) \\
        C'(k+1, v, T)=&\{\|y\|_2  (\frac{2L_\theta^2}{\alpha})^v \frac{6(\alpha/\Delta+T)^2L_\theta^3}{\alpha}+C_{\phi}[(v-1)(\frac{L_\theta^2}{\alpha})^{v-1}\dfrac{6(\alpha/\Delta+T)^2L_\theta^4}{\alpha}\notag \\
        &+(\frac{L_\theta^2}{\alpha})^{v}C(k+1,T)]+C'(k,v+1,T)\}.
    \end{align}

    Suppose \eqref{lip_m_r_1}, \eqref{lip_m_r_2} and \eqref{lip_m_r_3} holds for $l=0,\cdots, k$, where $1\leq k\leq K-1$.

    For $l=k+1$, we have
    \begin{align}
        &\|\hat{m}^{(k+1)}(\bs{z}_1, t)-\hat{m}^{(k+1)}(\bs{z}_2, t)\|_2 \\
        =&\|\eta(\phi^T\bs{r}^{(k)}(\bs{z}_1, t)+\hat{m}^{(k)}(\bs{z}_1,t); \tau_{k,t}^2)-\eta(\phi^T\bs{r}^{(k)}(\bs{z}_2, t)+\hat{m}^{(k)}(\bs{z}_2,t); \tau_{k,t}^2)\|_2\\
        \leq&(\alpha/\Delta+T)L_\theta^2\|\phi^T(\bs{r}^{(k)}(\bs{z}_1, t)-\bs{r}^{(k)}(\bs{z}_2, t))+\hat{m}^{(k)}(\bs{z}_1,t)-\hat{m}^{(k)}(\bs{z}_2,t)\|_2 \\
        \leq&(\alpha/\Delta+T)L_\theta^2(\|\phi\|_{\text{op}}C^\star(k, T)+C(k, T))\|\bs{z}_1-\bs{z}_2\|_2\\
        \leq&(\alpha/\Delta+T)L_\theta^2(C_{\phi}C^\star(k, T)+C(k, T))\|\bs{z}_1-\bs{z}_2\|_2\\
        =& C(k+1, T)\|\bs{z}_1-\bs{z}_2\|_2
    \end{align}
    and
    \begin{align}
        &\|\bs{r}^{(k+1)}(\bs{z}_1, t)-\bs{r}^{(k+1)}(\bs{z}_2, t)\| \\
        =&\|-\phi(\hat{m}^{(k+1)}(\bs{z}_1, t)-\hat{m}^{(k+1)}(\bs{z}_2, t))+\bs{r}^{(k)}(\bs{z}_1, t)b_t^{(k)}(\bs{z}_1, t)-\bs{r}^{(k)}(\bs{z}_2, t)b_t^{(k)}(\bs{z}_2, t)\|_2\\
        \leq&(\|\phi\|_{\text{op}}C(k+1, T)+C'(k, 0, T))\|\bs{z}_1-\bs{z}_2\|_2\\
        \leq & (C_{\phi}C(k+1, T)+C'(k, 0, T))\|\bs{z}_1-\bs{z}_2\|_2 \\
        =& C^*(k+1, T)\|\bs{z}_1-\bs{z}_2\|_2
    \end{align}
    and
    \begin{align}
        &\|\bs{r}^{(k+1)}(\bs{z}_1, t)\prod_{i=0}^v b_t^{(k+1+i)}(\bs{z}_1,t)-\bs{r}^{(k+1)}(\bs{z}_2, t)\prod_{i=0}^v b_t^{(k+1+i)}(\bs{z}_2,t)\|_2\\
        =&\|\bs{y}(\prod_{i=0}^v b_t^{(k+1+i)}(\bs{z}_1,t)-\prod_{i=0}^v b_t^{(k+1+i)}(\bs{z}_2,t))\notag\\
        &-\phi(\hat{m}^{(k+1)}(\bs{z}_1, t)\prod_{i=0}^v b_t^{(k+1+i)}(\bs{z}_1,t)-\hat{m}^{(k+1)}(\bs{z}_2, t)\prod_{i=0}^v b_t^{(k+1+i)}(\bs{z}_2,t))\notag\\
        &+\bs{r}^{(k)}(\bs{z}_1, t)\prod_{i=0}^{v+1} b_t^{(k+i)}(\bs{z}_1,t)-\bs{r}^{(k)}(\bs{z}_2, t)\prod_{i=0}^{v+1} b_t^{(k+i)}(\bs{z}_2,t)\|_2\\
        \leq& \{\|\bs{y}\|_2  (v+1)(\dfrac{L_\theta^2}{\alpha})^vC_b+\|\phi\|_{\text{op}} )\|\bs{z}_1-\bs{z}_2\|_2\\
        \leq & \{\|\bs{y}\|_2  (\frac{2L_\theta^2}{\alpha})^v C_b+C_{\phi}[(v-1)(\frac{L_\theta^2}{\alpha})^{v-1}C_b\sqrt{N}L_\theta+(\frac{L_\theta^2}{\alpha})^{v}C(k+1,T)] \notag\\
        &+C'(k,v+1,T)\}\|\bs{z}_1-\bs{z}_2\|_2 \\  
        = & \{\|\bs{y}\|_2  (\frac{2L_\theta^2}{\alpha})^v \frac{6(\alpha/\Delta+T)^2L_\theta^3}{\alpha}+C_{\phi}[(v-1)(\frac{L_\theta^2}{\alpha})^{v-1}\dfrac{6(\alpha/\Delta+T)^2L_\theta^4}{\alpha}+(\frac{L_\theta^2}{\alpha})^{v}C(k+1,T)] \notag\\
        &+C'(k,v+1,T)\}\|\bs{z}_1-\bs{z}_2\|_2 \\ 
        =&C'(k+1, v, T)\|\bs{z}_1-\bs{z}_2\|_2.
    \end{align}

    Finally, we prove \eqref{lip_m_r_1}, \eqref{lip_m_r_2} and \eqref{lip_m_r_3} holds for $k=0$.

    Obviously, \eqref{lip_m_r_1}, \eqref{lip_m_r_2} hold since $\hat{m}^{(0)}(\bs{z}_1, t)=\hat{m}^{(0)}(\bs{z}_1, t)=0$ and $\bs{r}^{(0)}(\bs{z}_1,t)-\bs{r}^{(0)}(\bs{z}_2,t)=y$.

    Moreover, 
    \begin{align}
        &\|\bs{r}^{(0)}(\bs{z}_1, t)\prod_{i=0}^v b_t^{(i)}(\bs{z}_1,t)-\bs{r}^{(0)}(\bs{z}_2, t)\prod_{i=0}^v b_t^{(i)}(\bs{z}_2,t)\|_2 \\
        =&\|y\|_2\|\prod_{i=0}^v b_t^{(i)}(\bs{z}_1,t)-\prod_{i=0}^v b_t^{(i)}(\bs{z}_2,t)\|_2\\
        \leq&\|y\|_2(2\dfrac{L_\theta^2}{\alpha})^vC_b \\
        \leq &\|y\|_2(2\dfrac{L_\theta^2}{\alpha})^v  \frac{6(\alpha/\Delta+T)^2L_\theta^3}{\alpha}.
    \end{align}
    We only need to let $C'(0, v, T)=\|y\|_2(2\dfrac{L_\theta^2}{\alpha})^v  \dfrac{6(\alpha/\Delta+T)^2L_\theta^3}{\alpha}$ to complete the proof.
 
\end{proof}

\begin{proof}[Proof of Theorem \ref{theorem 7}]
    \begin{align}
        &\dfrac{1}{N}D_{\text{KL}}(P_{\bs{z}_T/T}\|P_{\tilde{\bs{z}}_T/T}) \\
        \lesssim
        & \sum_{l=1}^L\E\int_{t_{l-1}}^{t_l}\dfrac{1}{N}\|m(\bs{z}_t, t)-\hat{m}(\bs{z}_{t_l}, t_l)\|_2^2dt \label{Girsanov_thm}\\
        \lesssim
        & \sum_{l=1}^L\E\int_{t_{l-1}}^{t_l}\dfrac{1}{N}\left( \|m(\bs{z}_t, t)-\hat{m}(\bs{z}_t, t)\|_2^2 + \|\hat{m}(\bs{z}_t, t)-\hat{m}(\bs{z}_{t_l}, t)\|_2^2 +\|\hat{m}(\bs{z}_{t_l}, t)-\hat{m}(\bs{z}_{t_l}, t_l)\|_2^2 \label{KL_three_terms}  \right)dt
    \end{align}

    where \eqref{Girsanov_thm} follows by Girsanov's theorem (see for example \cite[Proposition D.1]{oko2023diffusion}).

    For the first term of \eqref{KL_three_terms}, 
    \begin{align}
        &\lim_{K\rightarrow\infty}\lim_{N\rightarrow\infty}\sum_{l=1}^L\E\int_{t_{l-1}}^{t_l}\dfrac{1}{N} \|m(\bs{z}_t, t)-\hat{m}(\bs{z}_t, t)\|_2^2dt \\
        =
        &\lim_{K\rightarrow\infty}\sum_{l=1}^L\int_{t_{l-1}}^{t_l}\lim_{N\rightarrow\infty}\E\dfrac{1}{N} \|m(\bs{z}_t, t)-\hat{m}(\bs{z}_t, t)\|_2^2dt\label{first_eq1} \\
        =
        &0\label{first_eq3}
    \end{align} 
    where \eqref{first_eq1} follows from Dominated Convergence Theorem, and \eqref{first_eq3} follows from Theorem \ref{thm1}. Therefore, we could choose $K$ such that $1/N\sum_{l=1}^L\E\int_{t_{l-1}}^{t_l} \|m(\bs{z}_t, t)-\hat{m}(\bs{z}_t, t)\|_2^2dt\leq\epsilon/2$ for large $N$. For such $K$, $\delta$ could be given as follows:
    
    For the second term of \eqref{KL_three_terms}, by Lemma \ref{lemma10}, with probability $1-o_N(1)$, we have
    \begin{align}
        &\dfrac{1}{N}\E\|\hat{m}(\bs{z}_t, t)-\hat{m}(\bs{z}_{t_l}, t)\|_2^2 \\
        \leq
        & \dfrac{1}{N}C_4^2\E\|\bs{z}_t-\bs{z}_{t_l}\|_2^2 \\
        =
        &\dfrac{1}{N}C_4^2\E\|\int_{t_{l-1}}^tm(\bs{z}_s, s)ds+W_{t}-W_{t_l}\|_2^2 \\
        \leq
        & \dfrac{2}{N}C_4^2\left(\E\|\int_{t_{l-1}}^tm(\bs{z}_s, s)ds\|_2^2+\E\|W_{t}-W_{t_l}\|_2^2\right) \\
        \leq
        & 2C_4^2(L_\theta^2(t-t_{l-1})^2+t-t_{l-1}). \\
    \end{align}
    Therefore, 
    \begin{align}
        &\dfrac{1}{N}\sum_{l=1}^L\E\int_{t_{l-1}}^{t_l}\|\hat{m}(\bs{z}_t, t)-\hat{m}(\bs{z}_{t_l}, t)\|_2^2dt \\
        \leq
        & 2C_4^2\sum_{l=1}^L\int_{t_{l-1}}^{t_l}(L_\theta^2(t-t_{l-1})^2+t-t_{l-1})dt \\
        =
        &  2C_4^2 \sum_{l=1}^L(\dfrac{L_\theta^2\delta^3}{3}+\dfrac{\delta^2}{2}) \\
        =
        & 2C_4^2(\dfrac{L_\theta^2T\delta}{3}+\dfrac{T}{2})\delta .\label{sec_term}
    \end{align}

    For the third term of \eqref{KL_three_terms}, by Lemma \ref{lemma11}, with probability $1-o_N(1)$, we have
    \begin{align}
        &\sum_{l=1}^L\int_{t_{l-1}}^{t_l}\dfrac{1}{N}\|\hat{m}(\bs{z}_{t_l}, t)-\hat{m}(\bs{z}_{t_l}, t_l)\|_2^2dt \\
        \leq
        & \sum_{l=1}^L\int_{t_{l-1}}^{t_l} C_3^2(t-t_{l-1})^2dt \\
        \leq
        & \sum_{l=1}^L C_3^2\delta^3/3 \\
        =
        & \dfrac{C_3^2T\delta^2}{3}. \label{thi_term}
    \end{align}

    By \eqref{sec_term} and \eqref{thi_term}, \eqref{theorem 7} holds if we let 
    \begin{align}
        \delta\leq \min\left\{1, \dfrac{\epsilon/2}{2C_4^2(\dfrac{L_\theta^2T}{3}+\dfrac{T}{2})+ \dfrac{C_3^2T}{3}} \right\}.
    \end{align}
\end{proof}

Meanwhile, we also prove another condition for the theorem to hold, which we stated before as Corollary~\ref{large alpha case}. This can be derived directly from the following lemma.

\begin{lemma}
    If the derivative of $\Sigma^2\mapsto {\rm mmse}^*(\Sigma^{-2})$ is continuous at $\Sigma^2=0$, the fixed point equation \eqref{fixed point} will always have a unique solution for any fixed $\Delta$ when $\alpha$ is sufficiently large.
\end{lemma}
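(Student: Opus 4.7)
The plan is to cast the fixed point equation as a contraction problem on a bounded interval. Fix $\Delta > 0$ and let $f(\Sigma^2) := {\rm mmse}^*(\Sigma^{-2})$ and define
\[
\Phi_\alpha(E) := f\!\left(\frac{\Delta+E}{\alpha}\right) = {\rm mmse}^*\!\left(\frac{\alpha}{\Delta+E}\right).
\]
Fixed points of \eqref{fixed point} with $t=0$ are exactly fixed points of $\Phi_\alpha$. Since the prior is supported on $[-L_\theta,L_\theta]$, we have ${\rm mmse}^*(\cdot) \leq v := \mathbb{E}_\pi[\theta^2] \leq L_\theta^2$, so any fixed point satisfies $E \in [0,v]$, and we may restrict attention to this compact interval.

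The first step is to rule out a fixed point on the boundary via the intermediate value theorem (which also gives existence): $\Phi_\alpha(0) > 0 = 0$ and $\Phi_\alpha(v) \leq v$, so the continuous map $E \mapsto E - \Phi_\alpha(E)$ has at least one zero in $[0,v]$. The second and main step is to bound $\Phi_\alpha'$. By the chain rule,
\[
\Phi_\alpha'(E) \;=\; \frac{1}{\alpha}\,f'\!\left(\frac{\Delta+E}{\alpha}\right).
\]
For $E \in [0,v]$, the argument $(\Delta+E)/\alpha$ lies in $[\Delta/\alpha,(\Delta+v)/\alpha]$, which collapses to $\{0\}$ as $\alpha \to \infty$. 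By the hypothesis that $\Sigma^2 \mapsto f(\Sigma^2)$ has a continuous derivative at $\Sigma^2 = 0$, $f'(0^+)$ exists and is finite, and
\[
\sup_{E\in[0,v]} \bigl|f'((\Delta+E)/\alpha) - f'(0)\bigr| \;\longrightarrow\; 0 \quad \text{as } \alpha \to \infty.
\]
Hence there exists $\alpha_0 = \alpha_0(\Delta,f'(0))$ such that for all $\alpha > \alpha_0$ we have $\sup_{E \in [0,v]} \Phi_\alpha'(E) \leq (f'(0)+1)/\alpha < 1$.

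Finally, because $\Phi_\alpha$ is a strict contraction on $[0,v]$ for $\alpha > \alpha_0$, the map $E - \Phi_\alpha(E)$ is strictly increasing, so its zero is unique. This yields the desired uniqueness of the solution to \eqref{fixed point}, and therefore Corollary~\ref{large alpha case} follows by Theorem~\ref{theorem 7}. The only substantive obstacle is verifying the uniform convergence $f'((\Delta+E)/\alpha) \to f'(0)$ over $E \in [0,v]$, but this is immediate from continuity of $f'$ at $0$ combined with the uniform shrinkage of the interval $[\Delta/\alpha,(\Delta+v)/\alpha]$ as $\alpha \to \infty$; no further regularity of the prior is needed beyond what is already imposed.
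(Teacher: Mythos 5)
Your proof is correct and rests on the same key computation as the paper's: by the chain rule, $\Phi_\alpha'(E)=\alpha^{-1}\,({\rm mmse}^*)'\bigl((\Delta+E)/\alpha\bigr)$, and continuity of the derivative of $\Sigma^2\mapsto{\rm mmse}^*(\Sigma^{-2})$ at $\Sigma^2=0$ forces this to be $O(1/\alpha)$ uniformly over the bounded range of fixed points. The paper packages the same derivative bound as a Rolle's-theorem contradiction (two fixed points would force $\Phi_\alpha'(E_3)=1$ at some intermediate $E_3$, which the bound rules out for large $\alpha$) rather than your contraction-mapping phrasing, but the substantive step is identical.
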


\begin{proof}
As $\alpha\to\infty$, 
any fixed point of \eqref{fixed point}, 
$E$, will converge to $0$. 
Suppose there are two different fixed points, 
$E_1 < E_2$.
Then, there exists one $E_3\in[E1,E2]$ such that 
\begin{equation}\label{dfixed point}
    1=\frac{d}{dE}{\rm mmse}^*(\frac{\alpha}{\Delta+E})|_{E=E_3},
\end{equation}
which is the derivative of the fixed point equation. Thus, by the regularity assumption, the derivative of ${\rm mmse}^*$ can be bounded near 0. That is, we can find $\delta>0$,
$\frac{d}{d\Sigma^2}{\rm mmse}^*(\Sigma^{-2})<\delta$ holds for all $\Sigma$ near 0.

Thus, as $E_3\to 0$ when $\alpha\to \infty$, the right hand side of \eqref{dfixed point} can be bounded as follows:
\begin{align}
    &\frac{d}{dE}\left.{\rm mmse}^*(\frac{\alpha}{\Delta+E})\right|_{E=E_3}
    \\
    =&\frac{d}{d(\frac{\Delta+E}{\alpha})}{\rm mmse}^*(\frac{\alpha}{\Delta+E})\frac{d}{dE}\left.\left(\frac{\Delta+E}{\alpha}\right)\right|_{E=E_3}\\
\le&\frac{\delta}{\alpha}\to 0,
\end{align}
which leads to a contradiction, as the L.H.S. of \eqref{dfixed point} equals to 1.
\end{proof}

\section{BOUNDING THE WASSERSTEIN DISTANCE BETWEEN $z_T/T$ AND $\tilde{z}_T/T$}
\label{sec_w}

Here we improve our main result Theorem \ref{theorem 7} by giving the bound of $1/\sqrt{N}W_2(P_{z_T/T},P_{\tilde{z}_T/T})$. We use two terms to bound it by applying triangle inequality. The first term is $1/\sqrt{N}W_2(P_{\hat{z}_T/T},P_{\tilde{z}_T/T})$ where $\hat{z}_T$ and $\tilde{z}_T$ are generated by continuous AMP process\eqref{process2} and discretized AMP process\eqref{process3}. We show that the gap between these two AMP processes is small by using the properties that the maps $\bs{z}_t \mapsto\hat{m}(\bs{z}_t, t)$ and $t \mapsto\hat{m}(\bs{z}_t, t)$ are Lipschitz continuous shown in Lemma \ref{lemma11} and Lemma \ref{lemma10}.

\begin{lemma}\label{lemma_hat_tilde_z}
If $\delta<1$ and \eqref{mhat_part1}, \eqref{mhat_part2} and \eqref{BM_prop2} hold, the gap between two AMP processes is small, i.e.
    \begin{align}
        \sum_{l=1}^L\dfrac{1}{N}\int_{t_{l-1}}^{t_l}\|\hat{m}(\hat{z}_t,t)-\hat{m}(\tilde{z}_{t_{l-1}}, t_{l-1})\|_2^2dt\rightarrow 0 \text{ \ as \ }\delta\rightarrow 0.
        \label{mhat_contin}
    \end{align}
\end{lemma}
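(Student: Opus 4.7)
My plan is to split the integrand by a triangle inequality that isolates the time-discretization error and the state-discretization error separately, then control each using the Lipschitz properties already established:
\begin{align}
\|\hat m(\hat z_t,t)-\hat m(\tilde z_{t_{l-1}},t_{l-1})\|_2
\le
\|\hat m(\hat z_t,t)-\hat m(\hat z_t,t_{l-1})\|_2+\|\hat m(\hat z_t,t_{l-1})-\hat m(\tilde z_{t_{l-1}},t_{l-1})\|_2.
\end{align}
For the first summand, the time-Lipschitz bound \eqref{mhat_part2} from Lemma \ref{lemma11} yields $\frac{1}{N}\|\cdot\|_2^2\le C_3^2 (t-t_{l-1})^2\le C_3^2\delta^2$, so after summation over $l$ and integration the total contribution is $O(T\delta^2)$, which vanishes as $\delta\to 0$. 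For the second summand, the spatial Lipschitz bound \eqref{mhat_part1} from Lemma \ref{lemma10} reduces the problem to controlling $\frac{1}{N}\|\hat z_t-\tilde z_{t_{l-1}}\|_2^2$, which I will further split via $\frac{1}{N}\|\hat z_t-\hat z_{t_{l-1}}\|_2^2+\frac{1}{N}\|\hat z_{t_{l-1}}-\tilde z_{t_{l-1}}\|_2^2$.

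The first of these involves only the continuous process and is bounded using the SDE \eqref{process2}: since $\|\hat m(\hat z_s,s)\|_\infty\le L_\theta$, the drift contributes $O(\delta^2)$, and assumption \eqref{BM_prop2} controls the Brownian increment by $C_2\delta^2$. The second is the genuine Euler discretization error, and this is the step I view as the main obstacle. Denoting $e_l:=\frac{1}{N}\|\hat z_{t_l}-\tilde z_{t_l}\|_2^2$, I will derive a recursion of the form
\begin{align}
e_l\le (1+C\delta)\, e_{l-1}+R_l,
\end{align}
where $R_l=O(\delta^3)+O(\delta^2)+\delta\cdot\tfrac{1}{N}\int_{t_{l-1}}^{t_l}(\text{integrated Lipschitz gap})$, obtained by writing
\begin{align}
\hat z_{t_l}-\tilde z_{t_l}=(\hat z_{t_{l-1}}-\tilde z_{t_{l-1}})+\int_{t_{l-1}}^{t_l}\bigl[\hat m(\hat z_s,s)-\hat m(\tilde z_{t_{l-1}},t_{l-1})\bigr]\,ds,
\end{align}
applying the two Lipschitz bounds (with constants $C_3$ and $C_4$ independent of $l$ on the high-probability event), and using Cauchy--Schwarz together with the bound on $\|\hat z_s-\hat z_{t_{l-1}}\|_2$ derived above. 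Iterating this discrete Gronwall inequality with $e_0=0$ gives $\max_l e_l \le R\cdot L\cdot e^{CT}=O(\delta)$ because $L\delta=T$ and $R_l=O(\delta^2)$.

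Combining the pieces, the second summand integrates to something of order $C_4^2(\max_l e_l+\delta)\cdot T=O(\delta)$, and hence the whole expression in \eqref{mhat_contin} is $O(\delta)$, which tends to $0$ as $\delta\to 0$. Throughout, $C_3,C_4$ are the dimension-free constants produced (on the high-probability event \eqref{op_condition} and \eqref{BM_property}) by Lemmas \ref{lemma11} and \ref{lemma10}, so all error terms are bounded uniformly in $N$. The most delicate point is making sure that in the discrete Gronwall step the cross-terms produced by Cauchy--Schwarz (between the accumulated error $e_{l-1}$ and the within-step drift and Brownian increments) are absorbed into $(1+C\delta)e_{l-1}$ rather than producing factors that grow with $L$; the assumption $\delta<1$ is used here to keep the expansion $(1+C\delta)^L\le e^{CT}$ valid.
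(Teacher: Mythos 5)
Your proposal is correct and follows essentially the same route as the paper's proof: the same triangle-inequality split into a time-Lipschitz piece (controlled by Lemma \ref{lemma11}) and a space-Lipschitz piece (controlled by Lemma \ref{lemma10}), the same further split of $\|\hat z_t-\tilde z_{t_{l-1}}\|_2$ into a within-step increment (controlled by the drift bound and \eqref{BM_prop2}) plus the accumulated error $\|\hat z_{t_{l-1}}-\tilde z_{t_{l-1}}\|_2$, and a discrete Gronwall to close the argument. The only cosmetic difference is that the paper carries the induction on the integral quantities $B_l=\frac1N\int_{t_{l-1}}^{t_l}\|\hat m(\hat z_t,t)-\hat m(\tilde z_{t_{l-1}},t_{l-1})\|_2^2\,dt$ and bounds $D_l=\|\hat z_{t_l}-\tilde z_{t_l}\|_2$ via $D_l\le\sqrt{NT}\bigl(\sum_{s\le l}B_s\bigr)^{1/2}$, arriving at $B_l\le C_5\delta^2+C_6\delta\sum_{s<l}B_s$, while you carry the induction on $e_l=D_l^2/N$ with the local form $e_l\le(1+C\delta)e_{l-1}+O(\delta^2)$; these are equivalent dual formulations of the same Gronwall step, and the cross-term absorption you flag as delicate is handled cleanly by the weighted inequality $(a+b)^2\le(1+\epsilon)a^2+(1+\epsilon^{-1})b^2$ with $\epsilon\asymp\delta$, so there is no genuine gap.
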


\begin{proof}
    Let $B_l=\dfrac{1}{N}\displaystyle\int_{t_{l-1}}^{t_l}\|\hat{m}(\hat{z}_t,t)-\hat{m}(\tilde{z}_{t_{l-1}}, t_{l-1})\|_2^2dt$ and $D_l=\|\hat{z}_{t_l}-\tilde{z}_{t_l}\|_2$ for $l=1,2,\cdots, L$
    We claim that there exist constants $C_5(T,N,K,\epsilon)$ and $C_6(T,N,K,\epsilon)$ such that
    \begin{align}
        B_l\leq C_5\delta^{2}  (C_6\delta+1)^{l-1} \text{ for }  l=1,2,...L.
        \label{Bl_induction}
    \end{align}
    Then  $\displaystyle\sum_{l=1}^LB_l\leq\dfrac{C_5}{C_6}\delta((C_6\delta+1)^L-1)\leq \delta\dfrac{C_5}{C_6}e^{C_6T}$ which converges to 0 as $\delta\rightarrow 0$.

    Now we use induction to prove \eqref{Bl_induction}. Suppose \eqref{Bl_induction} holds for $i=1,2,...,l-1$. For $B_l$, it can be calculated as:
    \begin{align}
        B_l&\leq \dfrac{1}{N}\displaystyle\int_{t_{l-1}}^{t_l}(2\|\hat{m}(\hat{z}_t,t)-\hat{m}(\hat{z}_{t}, t_{l-1})\|_2^2+2\|\hat{m}(\hat{z}_{t}, t_{l-1})-\hat{m}(\tilde{z}_{t_{l-1}}, t_{l-1})\|_2^2)dt \\
        &\leq\dfrac{2C_3^2}{3}\delta^3 +  \dfrac{2C_4^2}{N}\displaystyle\int_{t_{l-1}}^{t_l}\|\hat{z}_t-\tilde{z}_{t_{l-1}}\|_2^2dt\\
        &\leq \dfrac{2C_3^2}{3}\delta^3+\dfrac{4C_4^2}{N}\displaystyle\int_{t_{l-1}}^{t_l}\|\hat{z}_t-\hat{z}_{t_{l-1}}\|_2^2dt+\dfrac{4C_4^2}{N}\displaystyle\int_{t_{l-1}}^{t_l}D_{l-1}^2dt,
        \label{first_term}
    \end{align}
    where the first inequality is from \eqref{mhat_part1} and \eqref{mhat_part2}. Then, consider the second term in the last equation, by \eqref{BM_prop2}, we have:
    \begin{align}
        \int_{t_{l-1}}^{t_l}\|\hat{z}_t-\hat{z}_{t_{l-1}}\|_2^2=&\int_{t_{l-1}}^{t_l}\|\int_{t_{l-1}}^t\hat{m}(\hat{z}_s,s)ds+dW_s\|_2^2dt\\
        \leq&\int_{t_{l-1}}^{t_l}2(\|\int_{t_{l-1}}^t\hat{m}(\hat{z}_s, s)ds\|_2^2+\|W_t-W_{t_{l-1}}\|_2^2)dt\\
        \leq&\int_{t_{l-1}}^{t_l}(2L_{\theta}^2\delta^2+2\|W_t-W_{t_{l-1}}\|_2^2)dt\\
        \leq& 2L_{\theta}^2\delta^3+2NC_2\delta^2.
        \label{second_term}
    \end{align}
    Now, we consider the third term.
    \begin{align}
        D_l-D_{l-1}&\leq \|\hat{z}_{t_l}-\hat{z}_{t_{l-1}}-(\tilde{z_{t_l}}-\tilde{z}_{t_{l-1}})\|_2 \\
        &\leq \displaystyle\int_{t_{l-1}}^{t_l}\|\hat{m}(\hat{z}_t, t)-\hat{m}(\tilde{z}_{t_{l-1}}, t_{l-1})\|_2dt \\
        &\leq (\delta N B_l)^{1/2}.
    \end{align}
    Since $D_0=0$, we have 
    \begin{align}
    D_l\leq \sqrt{\delta N}\displaystyle\sum_{s=1}^{l}B_s^{1/2}\leq \sqrt{\delta N l}(\displaystyle\sum_{s=1}^{l}B_s)^{1/2}\leq\sqrt{NT}(\displaystyle\sum_{s=1}^{l}B_s)^{1/2}
    \label{third_term}
    \end{align}
    Combining \eqref{first_term}, \eqref{second_term} and \eqref{third_term}, we have
    \begin{align}
        B_l \leq& \dfrac{2C_3^2}{3}\delta^3+
        \dfrac{4C_4^2}{N}(2L_{\theta}^2\delta^3+2NC_2\delta^2)+
        \dfrac{4C_4^2}{N}\displaystyle\int_{t_{l-1}}^{t_l}NT(\displaystyle\sum_{s=1}^{l-1}B_s) \\
        \leq&  [\dfrac{2C_3^2}{3}+
        \dfrac{4C_4^2}{N}(2L_{\theta}^2+2NC_2)]\delta^2+
        4C_4^2T\delta(\displaystyle\sum_{s=1}^{l-1}B_s).
        \label{33}
    \end{align}
    Let $C_5=\dfrac{2C_3^2}{3}+\dfrac{4C_4^2}{N}(2L_{\theta}^2+2NC_2)$ and $C_6 = 4TC_4^2$, by \eqref{Bl_induction}, we have
    \begin{align}
        B_l\leq& C_5\delta^2+C_6\delta(\sum_{s=1}^{l-1}B_s)\\
        \leq&C_5\delta^2+C_6\delta[\sum_{s=1}^{l-1}C_5\delta^{2}  (C_6\delta+1)^{l-1}]\\
        =&C_5\delta^{2}  (C_6\delta+1)^{l}.
    \end{align}
    
    Moreover, we need to prove the case when $l=1$. Actually, in \eqref{first_term}, the third term equals to $0$ when $l=1$. Thus, by \eqref{33}, we have,
    \begin{align}
        B_1\leq (2L_{\theta}^2+2NC_2)]\delta^{2}=C_5\delta^{2}.
    \end{align}
    
\end{proof}

The second term is $1/\sqrt{N}W_2(P_{z_T/T},P_{\hat{z}_T/T})$ where $z_T$ and $\hat{z}_T$ are generated by diffusion process\eqref{process1} with drift term $m(z_t,t)$ and continuous AMP process\eqref{process2}. We show that the second term will converge to $0$ under the assumption that 
$z_t \mapsto m(z_t, t)$ is Lipschitz continuous. Finally, we give the following theorem.

\begin{theorem}
    If $m(z_t, t)$ is $L_m$-Lipschitz w.r.t $z_t$, 
    \begin{align}
        \lim_{\delta\rightarrow 0}\lim_{K\rightarrow\infty}\lim_{N\rightarrow\infty}\dfrac{1}{\sqrt{N}}W_2(P_{z_T/T},P_{\tilde{z}_T/T}) =0.
    \end{align}
\end{theorem}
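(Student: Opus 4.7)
The plan is to use the triangle inequality
\begin{equation*}
\frac{1}{\sqrt{N}} W_2(P_{z_T/T}, P_{\tilde{z}_T/T}) \le \frac{1}{\sqrt{N}} W_2(P_{z_T/T}, P_{\hat{z}_T/T}) + \frac{1}{\sqrt{N}} W_2(P_{\hat{z}_T/T}, P_{\tilde{z}_T/T})
\end{equation*}
and drive each piece to zero under the iterated limit $\lim_{\delta\to 0}\lim_{K\to\infty}\lim_{N\to\infty}$. For both pieces I would couple the two processes being compared by using the same Brownian motion $W_t$ that already appears in \eqref{process1}, \eqref{process2}, \eqref{process3}, so that the definition of $W_2$ gives $\frac{1}{N} W_2^2 \le \frac{1}{N T^2}\mathbb{E}\|\cdot-\cdot\|_2^2$.

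The discretization term $\frac{1}{\sqrt{N}} W_2(P_{\hat{z}_T/T}, P_{\tilde{z}_T/T})$ is essentially already handled by Lemma \ref{lemma_hat_tilde_z}. Its proof introduces $D_l = \|\hat{z}_{t_l}-\tilde{z}_{t_l}\|_2$ and shows $D_L^2 \le NT\sum_{l=1}^L B_l$, with $\sum_l B_l \to 0$ as $\delta\to 0$ on the high-probability event where \eqref{mhat_part1}, \eqref{mhat_part2}, \eqref{BM_prop2} and \eqref{op_condition} all hold. On the complementary event, $D_L^2/N$ is bounded by a polynomial in $T$, $L_\theta$ and the Brownian-motion sup-norm, each with finite moments; since the bad event has probability $o_N(1)$, Cauchy--Schwarz upgrades the in-probability decay to the $L^2$ decay $\mathbb{E} D_L^2/N \to 0$ needed for $W_2^2/N \to 0$.

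The continuous-AMP term $\frac{1}{\sqrt{N}} W_2(P_{z_T/T}, P_{\hat{z}_T/T})$ is the heart of the argument. Coupling \eqref{process1} and \eqref{process2} via the same Brownian motion gives $z_t-\hat{z}_t = \int_0^t [m(z_s,s)-\hat{m}(\hat{z}_s,s)]\, ds$. I would add and subtract $m(\hat{z}_s,s)$ and $\hat{m}(z_s,s)$, then invoke the $L_m$-Lipschitz hypothesis on $m(\cdot,s)$ together with the $C_4(K,T)$-Lipschitz property of $\hat{m}(\cdot,s)$ from Lemma \ref{lemma10} to deduce
\begin{equation*}
\|z_t-\hat{z}_t\|_2 \le (L_m+C_4)\int_0^t \|z_s-\hat{z}_s\|_2\, ds + \int_0^t \|m(z_s,s)-\hat{m}(z_s,s)\|_2\, ds.
\end{equation*}
Squaring, applying Cauchy--Schwarz, taking expectation and invoking Gronwall's inequality produces
\begin{equation*}
\frac{1}{N}\mathbb{E}\|z_T-\hat{z}_T\|_2^2 \le C(L_m,C_4(K,T),T)\int_0^T \frac{1}{N}\mathbb{E}\|m(z_s,s)-\hat{m}(z_s,s)\|_2^2\, ds,
\end{equation*}
where the prefactor is finite once $K$ is fixed. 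For each fixed $K$, Theorem \ref{thm1} combined with dominated convergence (the integrand is uniformly bounded by $4L_\theta^2$, since the denoiser output lies in $[-L_\theta,L_\theta]$) forces the integral to zero as $N\to\infty$. Sending $K\to\infty$ and then $\delta\to 0$ closes the argument.

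The main obstacle is that Theorem \ref{thm1} controls the AMP error only at the true diffusion trajectory $z_s$, while the Gronwall integrand naturally lives on the AMP trajectory $\hat{z}_s$; the dimension-free Lipschitz hypothesis on $m(\cdot,t)$ is exactly the ingredient that lets Gronwall transfer the error back to $z_s$. A secondary delicate point is the order of limits: because the Lipschitz constant $C_4(K,T)$ from Lemma \ref{lemma10} may blow up with $K$, we must take $N\to\infty$ first so that, for each fixed $K$, a finite Gronwall prefactor multiplies a vanishing integral before $K$ is sent to infinity.
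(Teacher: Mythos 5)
Your argument for the continuous-AMP term uses a different decomposition than the paper's, and that change introduces a gap. The paper passes through the intermediate $m(\hat{z}_t,t)$, so the Gronwall rate is the assumed fixed constant $L_m$ and the residual is $\|m(\hat{z}_t,t)-\hat{m}(\hat{z}_t,t)\|_2^2$ on the AMP-driven trajectory. You instead pass through $\hat{m}(z_s,s)$, so your residual $\|m(z_s,s)-\hat{m}(z_s,s)\|_2^2$ lives on the true trajectory $z_s$ (where Theorem~\ref{thm1} applies most directly, an appealing feature), but your Gronwall rate now contains $C_4(K,T)$ from Lemma~\ref{lemma10}, which grows with $K$.

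The gap: for fixed $K$, the integral $\int_0^T\tfrac1N\E\|m(z_s,s)-\hat{m}(z_s,s)\|_2^2\,ds$ does \emph{not} vanish as $N\to\infty$. By the identity~\eqref{e20} and the state-evolution limits, $\lim_{N\to\infty}\tfrac1N\E\|m(z_s,s)-\hat{m}(z_s,s)\|_2^2 = E^{(K)}_{s,\Delta}-E_{s,\Delta}$, which is strictly positive for every finite $K$. So your claim that ``for each fixed $K$, Theorem~\ref{thm1} combined with dominated convergence forces the integral to zero as $N\to\infty$'' is false; Theorem~\ref{thm1} requires the iterated limit $N\to\infty$ then $K\to\infty$ (and if it were true for fixed $K$, your subsequent ``sending $K\to\infty$'' would be superfluous). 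Once $K\to\infty$ is genuinely needed for the residual, the $K$-dependence of your Gronwall prefactor becomes fatal: $C_4(K,T)$ grows geometrically in $K$ by the recursion in Lemma~\ref{lemma10}, so $\exp\bigl(C(L_m,C_4(K,T),T)\bigr)$ diverges while $E^{(K)}-E$ shrinks, and the product need not go to zero. The paper's decomposition sidesteps this precisely because $L_m$ is the $N$- and $K$-uniform hypothesis constant, making the Gronwall prefactor fixed so the iterated limit on the residual carries through. The rest of your outline --- the triangle-inequality split, the common-Brownian-motion coupling, and controlling the discretization term via Lemma~\ref{lemma_hat_tilde_z} --- matches the paper and is sound.
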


\begin{proof}
    \begin{align}
        &\lim_{\delta\rightarrow 0}\lim_{K\rightarrow\infty}\lim_{N\rightarrow\infty}\dfrac{1}{\sqrt{N}}W_2(P_{z_T/T},P_{\tilde{z}_T/T})\\ 
        \leq & 
        \lim_{\delta\rightarrow 0}\lim_{K\rightarrow\infty}\lim_{N\rightarrow\infty}\dfrac{1}{\sqrt{N}}W_2(P_{\hat{z}_T/T},P_{\tilde{z}_T/T}) +\lim_{K\rightarrow\infty}\lim_{N\rightarrow\infty}\dfrac{1}{\sqrt{N}}W_2(P_{z_T/T},P_{\hat{z}_T/T}).\label{W2_first}
    \end{align}

    For the first term of \eqref{W2_first}, we have
    \begin{align}
        &\dfrac{1}{\sqrt{N}}W_2(P_{\hat{z}_T/T},P_{\tilde{z}_T/T}) \\
        \leq & 
        \dfrac{1}{\sqrt{N}T}(\E\|\tilde{z}_T-\hat{z}_T\|_2^2)^{1/2} \\
        \leq & 
        \dfrac{1}{\sqrt{N}T}(\E\|\sum_{l=1}^{L}\int_{t_{l-1}}^{t_l}(\hat{m}(\hat{z}_t,t)-\hat{m}(\tilde{z}_{t_{l-1}}, t_{l-1}))dt\|_2^2)^{1/2} \\
        \leq &
        \dfrac{1}{\sqrt{N}}(\E\sum_{l=1}^{L}\int_{t_{l-1}}^{t_l}\|\hat{m}(\hat{z}_t,t)-\hat{m}(\tilde{z}_{t_{l-1}}, t_{l-1})\|_2^2dt)^{1/2}
    \end{align}

    which converges to $0$ as $\delta\rightarrow 0$ by applying Lemma \ref{lemma_hat_tilde_z}.

    For the second term of \eqref{W2_first}, we first consider $1/N\E\|z_T-\hat{z}_T\|_2^2$
    \begin{align}
        &\dfrac{1}{N}\E\|z_T-\hat{z}_T\|_2^2 \\
        =&
        \dfrac{1}{N}\E\|\int_0^T(m(z_t, t)-\hat{m}(\hat{z}_t, t))dt\|_2^2\\
        \leq&
        \dfrac{1}{N}\E\int_0^T\|m(z_t, t)-\hat{m}(\hat{z}_t, t)\|_2^2dt \\
        \lesssim&
        \dfrac{1}{N}\E\int_0^T\|m(z_t, t)-m(\hat{z}_t, t)\|_2^2dt+
        \dfrac{1}{N}\E\int_0^T\|m(\hat{z}_t, t)-\hat{m}(\hat{z}_t, t)\|_2^2dt\\
        \leq&
        \dfrac{1}{N}\E\int_0^TL_m^2\|z_t-\hat{z}_t\|_2^2dt+o_{K,N}
    \end{align}
    where $\lim_{K\rightarrow\infty}\lim_{N\rightarrow\infty}o_{K,N}\rightarrow 0$ by using Theorem \ref{thm1}.
    
    Let $f(t)=1/N\E\|z_t-\hat{z}_t\|_2^2$, then we get $f(t)\lesssim L_m^2\int_0^Tf(t)dt+o_{K,N}$ which implies $f(t)\lesssim o_{K,N}\exp{(L_m^2t)}$.

    Therefore,
    \begin{align}
        &\lim_{K\rightarrow\infty}\lim_{N\rightarrow\infty}\dfrac{1}{N}W_2^2(P_{z_T/T},P_{\hat{z}_T/T}) \\
        \leq&
        \lim_{K\rightarrow\infty}\lim_{N\rightarrow\infty}\dfrac{1}{NT^2}\E\|z_T-\hat{z}_T\|_2^2\\
        \lesssim&
        \lim_{K\rightarrow\infty}\lim_{N\rightarrow\infty}\dfrac{1}{T^2}(o_{K,N}e^{L_m^2T})\\   
        =&0.
    \end{align}
\end{proof}

\subsection{Connection between overlap concentration and Lipschitz constant of $m_t$}\label{sec_overlap}
In Appendix~\ref{sec_w}, we showed that convergence under the Wasserstein distance can be established assuming that $m_t(z_t):=m(z_t,t)$ has a dimension free Lipschitz constant.
In this section, we point out a connection between such a condition and a concentration property of the overlap.

For the Gibbs measure, a fundamental quantity is the overlap:
\begin{align}
R_{1,2}:=\frac1{p}\theta^{1\top}\theta^2
\end{align}
where $\theta^1$ and $\theta^2$ are two independent random vectors (spins) drawn from the Gibbs measure.
we write
$q_p:=\left<R_{1,2}\right>$, where $\left<\cdot\right>$ denotes the average with respect to the Gibbs measure
(note that \cite[(1.74)]{talagrand2010mean} defined $q$, which is the a.s.\ limit of $q_p$).
For the SK model, it is well known that $R_{1,2}$ concentrates around $q$ only in a certain parameter regime, 
whose boundary is conjectured to be the Almeida-Thouless (AT) line \citep[Section~1.8]{talagrand2010mean}.

In this section, we argue that if $R_{1,2}$ does not concentrate, then $m_t$ cannot have a dimension-free Lipschitz constant:
\begin{proposition}
Suppose that the prior of each coordinate of $\theta$ is supported on $[-1,1]$. 
Let
$\lambda_{\rm m}:=\lambda_{\max}(\nabla m_t)
=
\lambda_{\max}(\left<\theta\theta^{\top}\right>
-\left<\theta\right>\left<\theta\right>^{\top})$, then
\begin{align}
\frac{p}{2}\left<|R_{1,2}-q_p|^2\right>
\le 
\lambda_{\rm m}
\le 
p\sqrt{\left<|R_{1,2}-q_p|^2\right>}.
\label{e_126}
\end{align}
\end{proposition}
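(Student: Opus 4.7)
The plan is to reduce both inequalities to a single exact identity relating the overlap variance to the posterior covariance matrix, and then to bound its eigenvalues using elementary linear algebra together with the boundedness of the prior support.

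First I would set up shorthand. Let $C := \langle\theta\theta^{\top}\rangle-\langle\theta\rangle\langle\theta\rangle^{\top}$ denote the posterior covariance matrix and $v := \langle\theta\rangle$. Since the log-density of the tilted Gibbs measure depends on $z$ only through the linear term $z^{\top}\theta$, one has $\nabla m_t = C$, so $\lambda_{\rm m}=\lambda_{\max}(C)$. The first real computation is the replica-style identity
\begin{align}
p^{2}\left<|R_{1,2}-q_p|^{2}\right>
= \|C\|_F^{2} + 2\,v^{\top}C\,v.
\end{align}
To derive it, I expand using independence of the two replicas: $\left<(\theta^{1\top}\theta^{2})^{2}\right>=\sum_{i,j}\left<\theta_i\theta_j\right>^{2}=\|\langle\theta\theta^{\top}\rangle\|_F^{2}$ and $\left<\theta^{1\top}\theta^{2}\right>=\|v\|^{2}$, so $p^{2}\mathrm{Var}(R_{1,2})=\|\langle\theta\theta^{\top}\rangle\|_F^{2}-\|v\|^{4}$. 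Substituting $\langle\theta\theta^{\top}\rangle=C+vv^{\top}$ and using $\|C+vv^{\top}\|_F^{2}=\|C\|_F^{2}+2v^{\top}Cv+\|v\|^{4}$ yields the claim.

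The upper bound in \eqref{e_126} then drops out immediately: since $C\succeq 0$, both summands are nonnegative, so $p^{2}\left<|R_{1,2}-q_p|^{2}\right>\ge\|C\|_F^{2}=\sum_i\lambda_i(C)^{2}\ge\lambda_{\rm m}^{2}$, giving $\lambda_{\rm m}\le p\sqrt{\left<|R_{1,2}-q_p|^{2}\right>}$.

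For the lower bound I would bound each term in the identity by $\lambda_{\rm m}$ times something controlled by the support condition. Using $\|C\|_F^{2}=\sum_i\lambda_i^{2}\le\lambda_{\rm m}\mathrm{tr}(C)$ and the fact that $\theta_i\in[-1,1]$ gives $\mathrm{tr}(C)=\sum_i(\langle\theta_i^{2}\rangle-\langle\theta_i\rangle^{2})\le p-\|v\|^{2}$; meanwhile the Rayleigh quotient bound gives $v^{\top}Cv\le\lambda_{\rm m}\|v\|^{2}$. Adding,
\begin{align}
p^{2}\left<|R_{1,2}-q_p|^{2}\right>
\le \lambda_{\rm m}(p-\|v\|^{2})+2\lambda_{\rm m}\|v\|^{2}
= \lambda_{\rm m}(p+\|v\|^{2})
\le 2p\,\lambda_{\rm m},
\end{align}
which rearranges to the stated left-hand inequality.

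The proof is essentially routine once one notices the correct identity; I expect no serious obstacle. The only mildly delicate point is that the naive guess $p^{2}\mathrm{Var}(R_{1,2})=\|C\|_F^{2}$ is false, and one must carry the cross term $2v^{\top}Cv$ throughout. Keeping this term is what produces the factor $p+\|v\|^{2}\le 2p$ in the lower bound; discarding it would produce only $\|C\|_F^{2}\le\lambda_{\rm m}\mathrm{tr}(C)$, which by itself is insufficient.
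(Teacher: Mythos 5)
Your proof is correct and rests on essentially the same foundation as the paper: the variance identity $p^{2}\langle|R_{1,2}-q_p|^{2}\rangle=\|C\|_F^{2}+2v^{\top}Cv$ (equivalently, the paper's split into $\langle\theta^{\top}C\theta\rangle$ and $v^{\top}Cv$), the bound $\lambda_{\rm m}^{2}\le\|C\|_F^{2}\le p^{2}\langle|R_{1,2}-q_p|^{2}\rangle$ for the upper direction, and quadratic-form estimates using the $[-1,1]$ support for the lower direction. The only cosmetic difference is that the paper establishes the lower bound by observing that the larger of the two nonnegative terms is at least half the total and is at most $p\lambda_{\rm m}$ (via $\|\theta\|\le\sqrt{p}$ and $\|v\|\le\sqrt{p}$), whereas you bound the whole sum at once via $\|C\|_F^{2}\le\lambda_{\rm m}\tr(C)\le\lambda_{\rm m}(p-\|v\|^{2})$ and $v^{\top}Cv\le\lambda_{\rm m}\|v\|^{2}$, arriving at the same constant.
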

As a consequence, $\lambda_{\rm m}=O(1)$ only if $\left<|R_{1,2}-q_p|^2\right>=O(1/p)$, which is the typical order of the variance of the overlap in the high-temperature regime \citep{talagrand2010mean}.
Conversely, if $\left<|R_{1,2}-q_p|^2\right>=O(1/p)$, then by \eqref{e_126}, $\lambda_{\rm m}\le \sqrt{p}$, which is sharper than the \it{a priori} estimate $\lambda_{\rm m}=O(p)$ using the information $\|\theta\|_2^2\le p$.
\begin{proof}
Note that 
\begin{align}
\left<|R_{1,2}-q_p|^2\right>
=\frac1{p^2}\left<|\theta^{1\top}\theta^2|^2\right>
-\frac1{p^2}
|\left<\theta\right>^{\top}
\left<\theta\right>|^2;
\end{align}
and
\begin{align}
\left<\theta^{\top}\left(
\left<\theta\theta^{\top}
\right>
-\left<\theta\right>\left<\theta\right>^{\top}
\right)
\theta\right>
=
\left<|\theta^{1\top}\theta^2|^2\right>
-
\tr\left(\left<\theta\right>
\left<\theta\right>^{\top}
\left<\theta\theta^{\top}\right>
\right);
\label{e126}
\end{align}
and moreover
\begin{align}
\left<\theta\right>^{\top}
\left(
\left<\theta\theta^{\top}
\right>
-\left<\theta\right>\left<\theta\right>^{\top}
\right)
\left<\theta\right>
=
\tr\left(\left<\theta\right>
\left<\theta\right>^{\top}
\left<\theta\theta^{\top}\right>
\right)
-
|\left<\theta\right>^{\top}
\left<\theta\right>|^2.
\label{e127}
\end{align}
Note that \eqref{e126} and \eqref{e127} are both nonnegative and their sum is $p^2\left<|R_{1,2}-q_p|^2\right>$, so one of them is at least $\frac{p^2}{2}\left<|R_{1,2}-q_p|^2\right>$. 
Since the norm of $\theta$ is at most $\sqrt{p}$, the lower bound on the eigenvalue follows.

Similarly, by Jensen's inequality we have $\left<\theta\right>^{\top}\left<\theta\theta^{\top}\right>
\left<\theta\right>
\ge(\left<\theta\right>^{\top}\left<\theta\right>)^2$,
hence
\begin{align}
\lambda_{\rm \max}^2(\left<\theta\theta^{\top}\right>
-\left<\theta\right>\left<\theta\right>^{\top}
)
\le \tr\left[\left(\left<\theta\theta^{\top}\right>
-\left<\theta\right>\left<\theta\right>^{\top}
\right)^2
\right]
\le 
p^2\left<|R_{1,2}-q_p|^2\right>.
\end{align}
\end{proof}

While the non-concentration of the overlap is indeed possible for the SK model in some parameter regimes, 
it still remains to address if (or when) does non-concentration of the overlap happens in the linear-inverse problem.
For ``good'' prior distributions such as Gaussian, we clearly have $\left<|R_{1,2}-q_p|^2\right>=O(1/p)$ and indeed $\lambda_{\rm max}(\nabla m_t)=O(1)$ as the posterior is simply a Gaussian distribution with covariance matrix smaller than that of the prior.
On the other hand, consider the example where the prior is supported on $\{\pm 1\}$, $\phi_{ij}\sim\mathcal{N}(0,\frac1{n})$, and $\delta=n/p\to\infty$. 
Then sine $|\theta_j|$ is constant, we have
\begin{align}
\nu(\theta)\propto
\exp\left(\frac1{\Delta}y^{\top}\phi \theta
-
\frac1{2\Delta}\theta^{\top} (\phi^{\top}\phi-I)\theta
+
z^{\top}\theta
\right).
\label{e219}
\end{align}
It is well-known that the law of $\sqrt{\delta}(\phi^{\top}\phi-I)$ converges (in eigen-spectrum) to that of the Wigner random matrix when $n/p$ is large, so that $\nu$ converges to the Gibbs measure of the SK model
\begin{align}
\nu_{SK}(\theta)
\propto
\exp\left(\frac{\beta}{\sqrt{p}}\sum_{i<j\le p}g_{ij}\theta_i\theta_j+\sum_{i\le p}h_i\theta_i\right)
\end{align}
where $\beta=\frac1{\Delta\sqrt{\delta}}$ and $h=z+\frac1{\Delta}y^{\top}\phi$.
This suggests that $m_t$ may not have a dimension independent Lipschitz constant, at least in some parameter regime and for some $(y,z)$ (where $y$ does not necessarily follow the linear model \eqref{model1}).
This is in sharp contrast with the fact that $m_{K,t}$ does have a dimension free (depending on $K$) Lipschitz constant.

In the special case of \eqref{e219} where $y$ does follow the linear model \eqref{model1} (the so-called ``planted'' or ``Bayes optimal'' setting), 
\cite{barbier2020mutual} used the Nishimori identity to establish certain overlap concentration results.
Furthermore, the recent breakthroughs 
\citep{brennecke2023operator}\citep{el2024bounds} proved that for the SK model,
the interior of the overlap concentration regime also satisfies $O(1)$ operator norm bound for the covariance matrix.
These results strongly suggest the correctness our hypothesis about an $O(1)$ Lipschitz constant of our $m_t$,
although a complete proof does not immediately follow, 
since \citep{brennecke2023operator}\citep{el2024bounds} relied on a TAP characterization of the free energy,
which has been conjectured but not rigorously proved for the linear inverse problem in the general setting \citep{qiu2023tap}.

\section{ADDITIONAL EXPERIMENTS}\label{appen:results_of_numeric}
In this section we present some results of applying Algorithm \ref{Alg1} to sample from the posterior distribution in model \eqref{model1}. The prior distribution and the parameter settings are described in Section \ref{sec:numer_exper}.

\begin{figure}[t]
    \includegraphics[width=0.95\textwidth]{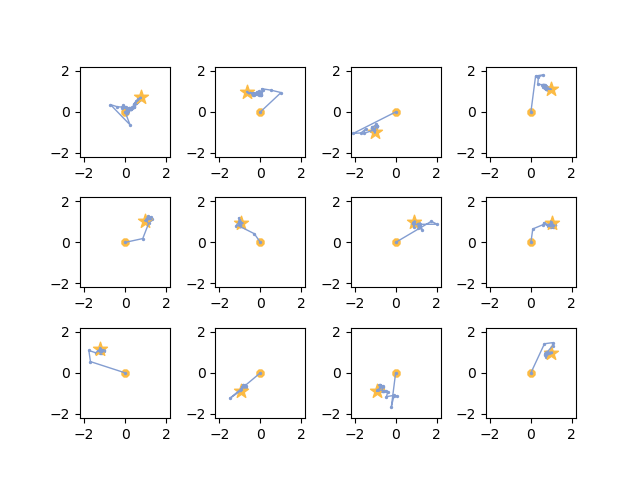}
    \caption{Trajectories of the first and second coordinates of the samples generated by Algorithm \ref{Alg1}. In this experiment, the prior distribution is $P_0(\theta)=\frac{1}{2}\delta_{-1}+\frac{1}{2}\delta_{1}$, and we set $M=1000, N=1250, K=20, T=200, \delta=0.1$. The three rows correspond to $\Delta$=0.01, 1, 10 respectively.}
    \label{fig:trajectory}
\end{figure}

Additionally, in Figure \ref{fig:alg_mse_dps_mse}, we calculate the {\rm Algorithm MSE} and {\rm MMSE} as follows:
\begin{align}
    {\rm Algorithm \ MSE} &=\frac{1}{2N} \|\bs{\theta}-\bs{\theta}^{\rm alg}\|_2^2\label{eq:alg_mse},\\
    {\rm MMSE} &= \frac{1}{N}\E [\|\bs{\theta}-\E[\bs{\theta}|\bs{\phi\theta}+\sqrt{\Delta/\alpha}\bs{w}]\|_2^2].
\end{align}
We use $1/(2N)$ rather than $1/N$ in \eqref{eq:alg_mse}, because if $\bs{\theta}^{\rm alg}$ is a perfect posterior sample, $\bs{\theta}$ and $\bs{\theta}^{\rm alg}$ are i.i.d.\ conditioned on $(\bs{X}, \bs{Y})$,
and hence $\mathbb{E}[\|\bs{\theta}-\bs{\theta}^{\rm alg}\|_2^2]= 2\mathbb{E}[\|\bs{\theta}-\mathbb{E}[\bs{\theta}|{\bf X,Y}]\|_2^2$.

For the second experiment, we also compare the sampling time between our algorithm and DPS-Gaussian method. From Table \ref{table:sample_time}, we observe that our algorithm is more efficient.
\begin{table}[h]
\caption{Sampling Time} \label{table:sample_time}
\begin{center}
\begin{tabular}{cccc}
\hline
\multicolumn{1}{l}{\textbf{Method}} & \textbf{N=192} & \textbf{N=768} & \textbf{N=1728} \\ \hline
\textbf{Our Algorithm}              & 8s             & 53s            & 390s            \\ \hline
\textbf{DPS-Gaussian}               & 184s           & 427s           & 882s            \\ \hline
\end{tabular}
\end{center}
\end{table}

\end{document}